\newtheorem{thm}{Theorem}
\newtheorem{lemma}{Lemma}
\newtheorem{pro}{Proposition}
\newtheorem{rk}{Remark}
\newtheorem{cor}{Corollary}
\numberwithin{equation}{section} \setcounter{tocdepth}{1}
\newcommand{\s}{{\sigma}}
\newcommand{\bea}{\begin{eqnarray}}
	\newcommand{\eea}{\end{eqnarray}}
\def\s{\sigma}
\def\s{\sigma}
\def\s{\sigma}
\begin{document}
	\title [Coupled Ising-Potts Model]{Coupled Ising-Potts Model: Rich Sets of Critical Temperatures and Translation-Invariant Gibbs Measures}
	
	\author{F.H. Haydarov, B.A. Omirov, U.A. Rozikov}
\address{F.H. Haydarov$^{c,g}$\begin{itemize}
			\item[$^c$] V.I.Romanovskiy Institute of Mathematics,  Uzbekistan Academy of Sciences, 9, Universitet str., 100174, Tashkent, Uzbekistan;
			\item[$^g$] New Uzbekistan University, 54, Mustaqillik Ave., Tashkent, 100007, Uzbekistan.
		\end{itemize}}
			\email{f.khaydarov@newuu.uz}
\address{Bakhrom A. Omirov \newline \indent
	Institute for Advanced Study in Mathematics,
	Harbin Institute of Technologies, Harbin 150001 \newline \indent
	uzhou Research Institute, Harbin Institute of Technologies, Harbin  215104, Suzhou, P.R. China}
\email{{\tt omirovb@mail.ru}}
\address{ U.A. Rozikov$^{c,d,f}$\begin{itemize}
		\item[$^c$] V.I.Romanovskiy Institute of Mathematics,  Uzbekistan Academy of Sciences, 9, Universitet str., 100174, Tashkent, Uzbekistan;
		\item[$^d$]  National University of Uzbekistan,  4, Universitet str., 100174, Tashkent, Uzbekistan.
		\item[$^f$] Karshi State University, 17, Kuchabag str., 180119, Karshi, Uzbekistan.
\end{itemize}}
\email{rozikovu@yandex.ru}

	\begin{abstract}

We consider a coupled Ising-Potts model on Cayley trees of order $ k \geq 2 $. This model involves spin vectors $ (s, \sigma) $, and generalizes both the Ising and Potts models by incorporating interactions between two types of spins: $ s = \pm 1 $ and $ \sigma = 1, \dots, q $. It is applicable to a wide range of systems, including multicomponent alloys, spin glasses, magnetic systems with multiple phases, biological systems, neural networks, and social models.

In this paper, we find some translation-invariant splitting Gibbs measures (TISGMs) and show, for $ k \geq 2 $, that at sufficiently low temperatures, the number of such measures is at least $ 2^{q} + 1 $. This is not an exact upper bound; for $ k = 2 $ and $ q = 5 $, we demonstrate that the number of TISGMs reaches the exact bound of 335, which is much larger than $ 2^5 + 1 = 33 $. We prove, for $q=5$ that there are 12 critical temperatures at which the number of TISGMs changes, and we provide the exact number of TISGMs for each intermediate temperature. Additionally, we identify temperature regions where three TISGMs, which are `close' to the free measure, are either extreme or non-extreme within the set of all Gibbs measures.

We also show that the coupled Ising-Potts model possesses properties that do not hold for the Ising and Potts models separately. In particular, we observe the following new phenomena:

1. In both the Ising and Potts models, if a Gibbs measure exists at some temperature $T_0 $, then it exists for all $T < T_0 $. However, in the coupled Ising-Potts model, some TISGMs may only exist at intermediate temperatures (neither very low nor very high).

2. It is known that the 5-state Potts model has 3 critical temperatures and up to 31 TISGMs. In this paper, we show that for $q = 5$, the coupled Ising-Potts model has four times as many critical temperatures as the Potts model and approximately 11 times as many TISGMs. Thus, our model alters the number of phases more rapidly and exhibits a significantly richer class of splitting Gibbs measures.
	
	\end{abstract}
	\maketitle
	{\bf Mathematics Subject Classifications (2010).} 82B20, 60K35.
	
	{\bf{Key words.}} Cayley tree, spin, configuration, temperature, Gibbs measure, extreme Gibbs measure.
	\tableofcontents
	\section{Introduction}
	
	A Cayley tree $\Gamma^k=(V, L)$ (where $V$ is the set of vertices and $L$ is the set of edges) with branching factor $ k\geq 1 $ is a connected infinite graph, every vertex of which has exactly $ k+1 $ neighbors.  The graph $\Gamma^k$ is acyclic, meaning it has no loops or cycles.  The graph has no boundary (it is infinite), but often, in physical or computational studies, a finite truncation of the Cayley tree is used.
	
	The uniformity of the Cayley tree (every vertex has $ k+1 $ neighbors) simplifies certain calculations, particularly in statistical mechanics, where it is used to model systems on hierarchical lattices.		
	Additionally, Cayley trees are used in network theory, ecology, and computer science for various purposes like representing branching processes or hierarchical structures.
		
{\bf The model:}	In this paper we consider Ising model's spins $s(x)\in I=\{-1,1\}$ and  Potts model's spins $\sigma(x)\in
	\Phi:=\{1,2,\dots,q\}$, both are assigned to the vertices
	of Cayley tree.
		
	 A coupled configuration $(s,\sigma)$ on $V$ is then defined
	as a function $x\in V\mapsto (s(x), \sigma (x))\in I\times\Phi$;
	the set of all configurations is $I^V\times\Phi^V$.
	
	The following Hamiltonian is called the coupled Ising-Potts model:
	\begin{equation}\label{ph}
		H(s, \sigma)=-J\sum\limits_{\langle x,y\rangle\in L}
		s(x)s(y)\delta_{\sigma(x)\sigma(y)},
	\end{equation}
	where $J\in \mathbb R$ is a constant,
	$\langle x,y\rangle$ stands for nearest neighbor vertices and $\delta_{ij}$ is the Kroneker's
	symbol:
	$$\delta_{ij}=\left\{\begin{array}{ll}
		0, \ \ \mbox{if} \ \ i\ne j\\[2mm]
		1, \ \ \mbox{if} \ \ i= j.
	\end{array}\right.
	$$

 The coupled Ising-Potts model defined by the Hamiltonian $\eqref{ph}$ is an intriguing model considering the interaction between two distinct types of spins.  It is clear that if $s(x)\equiv -1$ (or 1) then $\eqref{ph}$ coincides with Potts model. Moreover, if $\sigma(x)\equiv i$ for some $i=1,\dots, q$ then  $\eqref{ph}$ coincides with the Ising model.

 We note that in \cite{Mc} the following mixed-spin Ising-Potts model is introduced: each site of a regular
 lattice is occupied by both a two-state Ising spin and a $q$-state Potts spin. The Hamiltonian is

 $$
 \begin{aligned}
 	 H(s, \sigma)=-\sum_{\langle i, j\rangle} K_1 s_i s_j & -\sum_i K_2 s_i \\
 	& -\sum_{\langle i, j\rangle}\left(L_1 \delta_{\sigma_i, \sigma_j} \delta_{s_i, 1} \delta_{s_j, 1}+L_2 \delta_{\sigma_i, \sigma_i} \delta_{s_i,-1} \delta_{s_i,-1}+L_3 \delta_{\sigma_i, \sigma_j} \delta_{s_i,-s_j}\right)
 \end{aligned}
 $$
  where $s_i= \pm 1$ specifies the Ising spin state at lattice site $i, \sigma_i=1, \ldots, q$ specifies the Potts state at site $i$.

 It is easy to see that this Hamiltonian coincides with (\ref{ph}) in case $K_1=K_2=0$ and $L_1=L_2=-L_3=J$.

 In \cite{Mc}, the author considered this model on a Face-Centered Cubic (FCC) lattice and argued that it can be used to represent solid-liquid equilibria in binary mixtures. In the model, the onset of long-range order in the ferromagnetic Potts spins corresponds to solidification, while the ordering of the Ising spins still represents binary phase separation, which can occur in either the solid or liquid phases. Simple lattice models of this type could serve as the foundation for semiempirical models of solid-liquid equilibria, similar to how simple Ising-type models have been used to develop successful liquid mixture excess property models \cite{Ab}. In \cite{Mc}, the Bragg-Williams mean-field method was applied, showing that this model generates phase diagrams typically observed in solid-liquid equilibrium. In our paper, we consider the model on a Cayley tree and explore the splitting of Gibbs measures. Furthermore, we derive the conditions for the (non-) extremality of such measures.

Recall the Ashkin-Teller (AT)  model (see \cite{Ao}, \cite{Do},  \cite{Gi} and the references therein) viewed as a pair of interacting Ising models: For a graph $G=(V, L)$, the AT model is supported on pairs of spin configurations $\left(\tau, \tau^{\prime}\right) \in\{ \pm 1\}^V \times\{ \pm 1\}^V$ and the Hamiltonian is defined by

$$
H(\tau, \tau^{\prime})=\sum\limits_{\langle x,y\rangle\in L} \left(J_\tau \tau_u \tau_v+J_{\tau^{\prime}} \tau_u^{\prime} \tau_v^{\prime}+U \tau_u \tau_u^{\prime} \tau_v \tau_v^{\prime}\right),
$$
where $J_\tau, J_{\tau^{\prime}}, U$ are real parameters.

The coupled Ising-Potts model can be compared to the AT model when\footnote{The coupled Ising-Potts model can be more broadly defined by adding the classic Hamiltonians of both the Ising and Potts models into equality \eqref{ph}. However, in this paper, we will concentrate exclusively on the case described by equality \eqref{ph}.}  $J_\tau=J_{\tau^{\prime}}=0$. In this case, the term at $ U$  depends on two Ising spin configurations. However, the coupled Ising-Potts model features a similar term within a more general framework that describes interactions between multiple types of spins (Ising and Potts).

Another model which should be compared with model (\ref{ph}) is
 the mixed Ashkin-Teller model (see \cite{Be} and the references therein) the Hamiltonian is given by:
 $$
 \begin{aligned}
 	H= & -K_2 \sum_{\langle i, j\rangle}\left(\sigma_i \sigma_j+S_i S_j\right) \\
 	& -K_4 \sum_{\langle i, j\rangle} \sigma_i \sigma_j S_i S_j-D \sum_i S_i^2
 \end{aligned}
 $$
  where the spins $S_i= \pm 1, 0$ and $\sigma_i= \pm 1 / 2$, are localized on the sites of a hypercubic lattice. The first term describes the bilinear interactions between the $\sigma$ and $S$ spins at sites $i$ and $j$, with the interaction parameter $K_2$. The second term describes the four spin interaction with strength $K_4$, and on each site there is a single ion potential $D$.
 	
{\bf Motivations:} In this paper, we examine the splitting of Gibbs measures associated with the Hamiltonian \eqref{ph}. The primary motivations for this investigation are:
	
1. {\it Interacting degrees of freedom}: 	The most immediate reason the coupled Ising-Potts model is interesting is that it introduces interactions between two different types of order parameters, which is common in many physical systems \cite{Ba}. The Ising model represents ferromagnetic ordering where spins take one of two values, while the Potts model allows for more than two states, making it applicable to systems with more complex interactions, such as multicomponent systems. The interaction term in the Hamiltonian, $-Js(x)s(y)\delta_{\sigma(x) \sigma(y)}$, ensures that the Ising spins only interact when the Potts spins at neighboring sites are identical. This coupling between different types of order parameters allows for the study of systems where multiple orderings coexist and influence each other.
	
2. {\it Coupling between different types of orders}:
	In many physical systems, multiple degrees of freedom interact. For example, in magnetic materials, different types of ordering (such as ferromagnetic and antiferromagnetic ordering) can coexist or compete. Similarly, in more complex systems like spin glasses or multicomponent alloys, multiple types of order (which may correspond to different spin models) are coupled. In this coupled Ising-Potts model, the Ising spins represent ferromagnetic interactions, while the Potts spins (see \cite{Rbp}, \cite{Wu}, \cite{Wuf}) can represent multiple types of phases or magnetic orientations. The competition or coexistence between these different types of order is a hallmark of many complex systems, and this model provides a theoretical framework for studying their interactions.
	
3. {\it Phase transitions and critical phenomena:}
	The Ising and Potts models are well-known for exhibiting phase transitions, and their coupling in this model allows the study of how these transitions may occur simultaneously or influence one another (see \cite{Du, Dum, Ga8, Ga13, Ge}, \cite{Os, OV, Per, Per3, Per5}, \cite{Rbp}, \cite{Wu}). The Ising model typically exhibits a second-order phase transition in the case of ferromagnetism, while the Potts model may have a richer set of phase transitions depending on the number of states $q$. Coupling the two models allows the investigation of more complex phase diagrams where both types of transitions could appear in the same system, potentially leading to new and richer critical behavior. The model may exhibit multiple types of critical phenomena, such as continuous or discontinuous transitions depending on the interaction strength and the number of Potts states, making it useful for studying more complex systems.
	
	4. {\it Lattice structure}:
	The fact that this model is defined on a Cayley tree (a hierarchical tree-like lattice) is important because the structure of the lattice can dramatically affect the nature of phase transitions and critical behavior. In statistical mechanics, the Cayley tree is often used as a solvable model to study the behavior of systems with no loops or cycles. The tree-like structure simplifies calculations while still capturing essential features of interacting spin systems. It is known that the critical temperature and other properties of spin systems on hierarchical lattices like the Cayley tree can differ from those on regular lattices, making this an important setting for theoretical studies (see \cite{BR}, \cite{KRK}, \cite{KR}, \cite{Ro}, \cite{Rbp}).

{\bf Examples of applicability}:
	
	a) Multicomponent systems:
	
	An alloy is\footnote{https://en.wikipedia.org/wiki/Alloy} a combination of metals or metals combined with one or more other elements.
	Examples are combining the metallic elements gold and copper produces red gold, gold and silver becomes white gold, and silver combined with copper produces sterling silver. 	
	
	By modeling of microstructural evolution one can study many metals processing
	companies because the alloys are usually designed computationally by tailoring their
	microstructural features (see \cite{AZ}, \cite{Pol} and references therein).
	
	The coupled Ising-Potts model can be used to describe systems where multiple types of components interact. For instance, in alloy systems where two different types of interactions (ferromagnetic and antiferromagnetic) are present, the Ising spins might model the ferromagnetic interaction, while the Potts spins might represent the different phases or types of atoms in the alloy. This generalization to multicomponent systems is essential for understanding materials where more than two types of magnetic ordering or other types of interaction are present simultaneously.
		
	b) Spin-glass systems:	In disordered systems, such as spin glasses \cite{Ni}, the interactions between spins are complex and typically random. The Potts model provides a natural framework to describe such systems, as it can account for multiple spin states. Coupling this with the Ising model allows for more general descriptions of systems with multiple interacting components, such as in random magnetic materials where different types of disorder can coexist and influence each other. The coupled Ising-Potts model can describe the way different types of disorder (e.g., random magnetic fields and competing interactions) affect the overall behavior of the system.
	
	c) Magnetic systems with multiple phases:
	The model is also applicable to magnetic systems where different phases of magnetic ordering exist simultaneously. For example, in certain high-temperature superconductors, magnetic ordering may occur in more than one phase, and these phases may interact. The Potts spins can represent different phases, while the Ising spins could represent ferromagnetic or antiferromagnetic interactions between the particles. The Hamiltonian would describe how these multiple phases influence the magnetic ordering, and how the system transitions between these phases.
	
d) Biological systems:
	In systems biology or ecological models, the Ising spins might represent the state of individual biological entities (such as genes, proteins, or cells), while the Potts spins could represent larger-scale phases or groupings of these entities (such as tissues or biological networks) (see \cite{Rbp} and references therein). For example, in developmental biology, the states of individual cells can be coupled with the overall state of the tissue, where the cells' behavior is influenced by the tissue phase. The model could help understand how these two types of order parameters (individual cells and tissue-level structures) interact during development or disease progression.
	
e) Neural networks:	Neural network models often involve multiple layers or states, where each layer may have multiple possible states \cite{Ho}. The Ising spins might represent the activation state of neurons in a binary network, while the Potts spins could represent different types of activity patterns or classes of neurons. The interaction between these two types of spins could model the influence of different levels of processing in a neural network. This can be useful for understanding the dynamics of learning and information propagation in networks with hierarchical or complex interactions.
	
f) Social and economic models:
	In social physics, the Ising spins could represent individuals' opinions on a particular issue (e.g., agree or disagree), and the Potts spins could represent different societal groups or factions \cite{Gr}. The model would describe how the opinion of individuals is influenced by the groups they belong to, and how group membership can change based on individual interactions. This could be particularly useful for modeling social influence, opinion dynamics, and the formation of consensus in societies with multiple factions.
	
{\bf Structure of the paper}: In Section 2, we derive a vector-valued functional equation involving unknown functions defined at the vertices of Cayley tree. Each solution to this equation corresponds to a splitting Gibbs measure. Notably, a solution (function) that is independent of the vertices of the tree defines a translation-invariant splitting Gibbs measure (TISGM).
	
	Section 3 focuses on translation-invariant splitting Gibbs measures (TISGMs). These measures are influenced by the parameters  $k \geq 2$, $q\geq 2$, and  $\theta>0$. Under certain conditions related to these parameters, we present several TISGMs. Specifically, for  $k = 2$  and  $q = 5$, we demonstrate that the maximum number of TISGMs can reach 335, depending on the value of  $\hat{I}>0$. It is important to note that this number is significantly larger than the 31 TISGMs observed in the 5-state Potts model \cite{KRK}.
	
	Subsection 4.1 devoted to find non-extremality conditions for a TISGM, where we examine the Kesten-Stigum condition based on the second-largest eigenvalue. For three of TISGMs we provide explicit formulas for the critical parameters at which the Kesten-Stigum condition holds. These conditions are not expected to be sharp in all cases, since the Kesten-Stigum bound is not sharp in general.
	
	In Subsection 4.2, using methods of \cite{MSW}, we find extremality conditions for TISGMs considered in subsection 4.1. 	
		
	\section{The compatibility of measures}
	
	Define a finite-dimensional distribution of a probability measure $\mu$ in the volume $V_n$ as
	\begin{equation}\label{p*}
		\mu_n(s_n,\sigma_n)=Z_n^{-1}\exp\left\{-\beta H_n(s_n,\sigma_n)+\sum\limits_{x\in W_n}h_{s(x),\sigma(x),x}\right\},
	\end{equation}
	where $\beta=1/T$, $T>0$-temperature,  $Z_n^{-1}$ is the normalizing factor,
\begin{equation}\label{hx}	\{h_x=(h_{-1,1,x},\dots, h_{-1,q,x}; h_{1,1,x},\dots, h_{1,q,x} )\in \mathbb R^{2q}, x\in V\}
\end{equation}
	 is a collection of vectors and
	$$H_n(s_n, \sigma_n)=-J\sum\limits_{\langle x,y\rangle\in L_n}
		s(x)s(y)\delta_{\sigma(x)\sigma(y)}.$$
	
	We say that the probability distributions (\ref{p*}) are compatible if for all
	$n\geq 1$ and  $s_{n-1}\in I^{V_{n-1}}$,
	$\sigma_{n-1}\in \Phi^{V_{n-1}}$:
	\begin{equation}\label{p**}
		\sum\limits_{(w_n, \omega_n)\in I^{W_n}\times \Phi^{W_n}}\mu_n(s_{n-1}\vee w_n, \sigma_{n-1}\vee \omega_n)=\mu_{n-1}(s_{n-1},\sigma_{n-1}).
	\end{equation}
	Here symbol $\vee$ denotes the concatenation (union) of the configurations.
	
	In this case, by Kolmogorov's theorem there exists a unique measure $\mu$ on $I^V\times\Phi^V$ such that,
	for all $n$ and  $s_n\in I^{V_n}$, $\sigma_n\in \Phi^{V_n}$,
	$$\mu(\{(s, \sigma)|_{V_n}=(s_n,\sigma_n)\})=\mu_n(s_n, \sigma_n).$$
	Such a measure is called a {\it splitting Gibbs measure} (SGM) corresponding to the Hamiltonian (\ref{ph}) and vector-valued functions (\ref{hx}).

	The following theorem describes conditions on $h_x$ guaranteeing compatibility of $\mu_n(s_n,\sigma_n)$.
	
	\begin{thm}\label{ep} Probability distributions
		$\mu_n(s_n,\sigma_n)$, $n=1,2,\ldots$, in
		(\ref{p*}) are compatible iff for any $x\in V$
		the following equation holds:
		\begin{equation}\label{p***}
			z_{\epsilon, i, x}=\prod_{y\in S(x)}
			{(1-\theta^{-\epsilon})\left(\theta^\epsilon z_{\epsilon,i,y}-z_{-\epsilon,i,y}\right)+\sum\limits_{j=1}^{q-1}\left(z_{-1,j,y}+z_{1,j,y} \right)+z_{1,q,y}+1\over \theta+\theta^{-1}z_{1,q,y}+\sum\limits_{j=1}^{q-1}\left(z_{-1,j,y}+z_{1,j,y} \right)},
		\end{equation}
	\begin{equation}\label{ou}	z_{1, q, x}=\prod_{y\in S(x)}
	{(1-\theta^{-1})\left(\theta z_{1,q,y}-1\right)+\sum\limits_{j=1}^{q-1}\left(z_{-1,j,y}+z_{1,j,y} \right)+z_{1,q,y}+1\over \theta+\theta^{-1}z_{1,q,y}+\sum\limits_{j=1}^{q-1}\left(z_{-1,j,y}+z_{1,j,y} \right)},\end{equation}
		where
\begin{equation}\label{zt}		
	\epsilon=-1,1, \ \ z_{\epsilon, i, x}=\exp(h_{\epsilon, i, x}-h_{-1, q, x}), \, i=1,\dots,q-1, \ \
		\theta=\exp(J\beta),\end{equation}
	and $S(x)$ is the set of direct successors of $x$.
	\end{thm}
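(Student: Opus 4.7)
The plan is to prove both implications simultaneously by reducing the consistency condition (\ref{p**}) to a vertex-by-vertex functional equation via the standard Markov argument on trees. The key structural observation is that all edges added when passing from $V_{n-1}$ to $V_n$ connect $W_{n-1}$ to $W_n$, and $W_n$ is an independent set of vertices. Consequently, after substituting (\ref{p*}), the partition sum over $(w_n,\omega_n)\in I^{W_n}\times\Phi^{W_n}$ will decouple across $x\in W_{n-1}$.

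First I would substitute (\ref{p*}) into (\ref{p**}) and split $H_n=H_{n-1}+\Delta H_n$, where $\Delta H_n=-J\sum_{x\in W_{n-1}}\sum_{y\in S(x)}s(x)s(y)\delta_{\sigma(x)\sigma(y)}$. Using that the vertices in $W_n=\bigsqcup_{x\in W_{n-1}}S(x)$ are pairwise non-adjacent, the summation over $(w_n,\omega_n)$ factors into single-vertex contributions and yields
$$
\sum_{(w_n,\omega_n)}\mu_n(s_{n-1}\vee w_n,\sigma_{n-1}\vee\omega_n)=\frac{Z_{n-1}}{Z_n}\,\mu_{n-1}(s_{n-1},\sigma_{n-1})\prod_{x\in W_{n-1}}\frac{\prod_{y\in S(x)}N_{s(x),\sigma(x),y}}{\exp(h_{s(x),\sigma(x),x})},
$$
where $N_{\epsilon,i,y}:=\sum_{(s',\sigma')\in I\times\Phi}\theta^{\epsilon s'\delta_{i\sigma'}}\exp(h_{s',\sigma',y})$. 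Hence (\ref{p**}) is equivalent to requiring that the trailing product equal $Z_n/Z_{n-1}$ for every boundary configuration. Since this product factors over $x\in W_{n-1}$ and the boundary values are arbitrary, the identity holds iff there are positive constants $c(x)$ with $\exp(h_{\epsilon,i,x})=c(x)\prod_{y\in S(x)}N_{\epsilon,i,y}$ for every $(\epsilon,i)\in I\times\Phi$. Taking the ratio against the reference state $(-1,q)$ eliminates $c(x)$ and produces $z_{\epsilon,i,x}=\prod_{y\in S(x)}N_{\epsilon,i,y}/N_{-1,q,y}$.

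The remainder is an explicit computation of the ratio $N_{\epsilon,i,y}/N_{-1,q,y}$. Isolating the column $\sigma'=i$ gives
$$
N_{\epsilon,i,y}=\theta^{-\epsilon}\exp(h_{-1,i,y})+\theta^{\epsilon}\exp(h_{1,i,y})+\sum_{\sigma'\neq i}\bigl(\exp(h_{-1,\sigma',y})+\exp(h_{1,\sigma',y})\bigr).
$$
Dividing by $\exp(h_{-1,q,y})$ and adding and subtracting the missing diagonal entries, the off-diagonal rows produce the common background $\sum_{j=1}^{q-1}(z_{-1,j,y}+z_{1,j,y})+z_{1,q,y}+1$, independent of $(\epsilon,i)$, while the $(\epsilon,i)$-dependent correction collapses into a two-term expression which, once grouped, coincides with the compact factored form stated in (\ref{p***}) for $i\le q-1$ and in (\ref{ou}) for $(\epsilon,i)=(1,q)$. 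The denominator $N_{-1,q,y}/\exp(h_{-1,q,y})=\theta+\theta^{-1}z_{1,q,y}+\sum_{j=1}^{q-1}(z_{-1,j,y}+z_{1,j,y})$ matches both displays.

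The main (and essentially only) obstacle is the algebraic bookkeeping of the $\theta^{\pm\epsilon}$ contributions coming from the $\sigma'=i$ diagonal and the recognition of the resulting correction as the factored expression in the statement; no analytical difficulty is involved. The converse direction is immediate by running the derivation in reverse: given $\{h_x\}_{x\in V}$ satisfying (\ref{p***})--(\ref{ou}), one defines $Z_n$ recursively through the normalization $\sum_{(s_n,\sigma_n)}\mu_n=1$, and the calculation above shows that (\ref{p**}) holds at every level $n$.
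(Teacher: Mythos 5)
Your proposal is correct and follows essentially the same route as the paper: substitute (\ref{p*}) into (\ref{p**}), use the independence of the vertices of $W_n$ to factor the sum into single-site contributions $N_{\epsilon,i,y}=\sum_{(j,u)}\exp(J\beta\epsilon j\delta_{iu}+h_{j,u,y})$, deduce the representation $\exp(h_{\epsilon,i,x})=c(x)\prod_{y\in S(x)}N_{\epsilon,i,y}$ (the paper's $a(x)$), and take ratios against the reference state $(-1,q)$; the converse via the normalization identity $Z_{n-1}A_{n-1}=Z_n$ is also the paper's argument. No substantive differences.
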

	\begin{proof}  We use the following equalities:
		$$V_n=V_{n-1}\cup W_n, \ \ W_n=\bigcup_{x\in W_{n-1}}S(x).$$
		
		{\sl Necessity.}  Assume that (\ref{p**}) holds, we shall prove (\ref{p***}). Substituting (\ref{p*}) into
		(\ref{p**}), obtain that  for any configurations $(s_n,\sigma_{n-1})$: $x\in V_{n-1}\mapsto (s_n(x), \sigma_{n-1}(x))\in I\times \Phi $:
	$$			\frac{Z_{n-1}}{Z_n}\sum\limits_{(w_n, \omega_n)\in I^{W_n}\times \Phi^{W_n}}
			\exp\left(\sum\limits_{x\in W_{n-1}}\sum\limits_{y\in S(x)} (J\beta s_{n-1}(x)w_n(y)\delta_{\sigma_{n-1}(x)\omega_n(y)}+
				h_{s_n(y),\omega_n(y),y})\right)$$
		\begin{equation}\label{puu}			
		=		\exp\left(\sum\limits_{x\in
			W_{n-1}}h_{s_{n-1}(x),\sigma_{n-1}(x),x}\right).
	\end{equation}
			
	From (\ref{puu}) we get:
		$${Z_{n-1}\over Z_n}\sum\limits_{(w_n, \omega_n)\in I^{W_n}\times \Phi^{W_n}}
		\prod_{x\in W_{n-1}}\prod_{y\in S(x)} \exp\,(J\beta s_{n-1}(x)w_n(y)\delta_{\sigma_{n-1}(x)\omega_n(y)}+
		h_{s_n(y),\omega_n(y),y})$$ $$ = \prod_{x\in W_{n-1}} \exp\,(h_{s_{n-1}(x),\sigma_{n-1}(x),x}). $$
		Fix $x\in W_{n-1}$ and rewrite the last equality for
		$(s_{n-1}(x), \sigma_{n-1}(x))=(\epsilon, i)$, $\epsilon=-1,1$, $i=1,\dots,q$ and $(s_{n-1}(x), \sigma_{n-1}(x))=(-1, q)$, then dividing each of them to the last one we get
		\begin{equation}\label{als}\prod_{y\in S(x)}\frac{\sum\limits_{(j, u)\in I\times \Phi}\exp\,(J\beta \epsilon j\delta_{iu}+h_{j,u,y})}{\sum\limits_{(j, u)\in I\times \Phi}\exp\,(-J\beta j\delta_{qu}+h_{j,u,y})}= \exp\,(h_{\epsilon,i,x}-h_{-1,q,x}),\end{equation}
		where $\epsilon=-1,1, \, i=1,\dots, q.$
				
		Now by using notations (\ref{zt}), from (\ref{als}) we get  (\ref{p***}).
		Note that $z_{-1,q,x}\equiv 1$. Therefore, $z_{1,q,x}$ satisfies (\ref{ou}).
		
		{\sl Sufficiency.} Suppose that (\ref{p***}) holds. It is equivalent to
		the representations
		\begin{equation}\label{pru}
			\prod_{y\in S(x)}\sum\limits_{(j, u)\in I\times \Phi}\exp\,(J\beta \epsilon j\delta_{iu}+h_{j,u,y})= a(x)\exp\,(h_{\epsilon,i,x}),, \ \ i=1,\dots,q-1
		\end{equation} for some function $a(x)>0, x\in V.$
		We have
		$$
		{\rm LHS \ of \ (\ref{p**})}=\frac{1}{Z_n}\exp(-\beta H(s_{n-1},\sigma_{n-1}))\times
		$$
		\begin{equation}\label{pru1}
			\prod_{x\in W_{n-1}} \prod_{y\in S(x)}\sum\limits_{(\epsilon,u)\in I\times \Phi}\exp\,(J\beta s_{n-1}(x)\epsilon\delta_{\sigma_{n-1}(x)u}+h_{\epsilon,u,y}).\end{equation}
		
		Substituting (\ref{pru}) into (\ref{pru1}) and denoting $A_n=\prod_{x \in W_{n-1}} a(x)$,
		we get
		\begin{equation}\label{pru2}
			{\rm RHS\ \  of\ \  (\ref{pru1}) }=
			\frac{A_{n-1}}{Z_n}\exp(-\beta H(s_{n-1},\sigma_{n-1}))\prod_{x\in W_{n-1}}
			\exp(h_{s_{n-1}(x),\sigma_{n-1}(x),x}).\end{equation}
		
		Since $\mu_n$, $n \geq 1$ is a probability, we should have
		$$ \sum\limits_{(s_{n-1}, \sigma_{n-1})\in I^{V_{n-1}}\times \Phi^{V_{n-1}}} \sum\limits_{(w_n, \omega_n)\in I^{W_n}\times \Phi^{W_n}}\mu_n(s_{n-1}\vee w_n, \sigma_{n-1}\vee \omega_n) = 1. $$
		
		Hence from (\ref{pru2})  we get $Z_{n-1}A_{n-1}=Z_n$, and (\ref{p**}) holds.
		
	\end{proof}
	Hence, from Theorem \ref{ep} it follows that for any $h=\{h_x,\ \ x\in V\}$
satisfying the functional equation (\ref{p***}) there exists a unique splitting Gibbs measure $\mu$ and vice versa.

	\begin{rk} (see \cite[Remark 2.6 and Remark 2.7]{Rbp} )\label{rm:r2.1}
			Denote by $\mathcal G(H)$ the set of all Gibbs measures corresponding to the Hamiltonian $H$ (given by (\ref{ph})). This is a non-empty
			convex compact subset in the space of all
			probability measures.
		We note that splitting Gibbs measures (SGMs) are very important, because:
	
	\begin{itemize}
		\item \ \  \emph{any extreme Gibbs measure
			$\mu\in{\rm ex}\mathcal G(H)$ is SGM}; therefore, the question of
		uniqueness of the Gibbs measure is reduced to that in the SGM class. Moreover, for each given
		temperature, the description
		of the set $\mathcal G(H)$ is equivalent to the full description of the set of all extreme SGMs.
		
		\item \ \ \emph{Any SGM corresponds} to a solution of (\ref{p***}) given in Theorem \ref{ep}. Thus our main problem is reduced
		to solving the functional equation (\ref{p***}) and to check when a SGM corresponding to a solution is extreme.
		
		\end{itemize}
\end{rk}
	Solving the functional equation (\ref{p***}) is extremely challenging due to its non-linear nature, its multidimensional structure, and the fact that the unknown functions are defined on a tree. Even determining all unknown constant functions (which are independent of the vertices of the tree) is a difficult task. In this paper, we present a class of such constant solutions and examine the extremality of the corresponding SGMs.
	
	\section{Translation-invariant Gibbs measures}
	In this section, we consider Gibbs measures which are translation-invariant, which correspond to solutions of the form:
	$$z_{\epsilon, i, x}\equiv z_{\epsilon, i}, \ \mbox{for all} \ \ x\in V.$$ Then from equation (\ref{p***}) we get
	\begin{equation}\label{pt}
				z_{\epsilon, i}=\left(
		{(1-\theta^{-\epsilon})\left(\theta^\epsilon z_{\epsilon,i}-z_{-\epsilon,i}\right)+\sum\limits_{j=1}^{q-1}\left(z_{-1,j}+z_{1,j} \right)+z_{1,q}+1\over \theta+\theta^{-1}z_{1,q}+\sum\limits_{j=1}^{q-1}\left(z_{-1,j}+z_{1,j} \right)}\right)^k, \, \epsilon=-1,1,
	\end{equation}
\begin{equation}\label{ptq}	z_{1, q}=\left({(1-\theta^{-1})\left(\theta z_{1,q}-1\right)+\sum\limits_{j=1}^{q-1}\left(z_{-1,j}+z_{1,j} \right)+z_{1,q}+1\over \theta+\theta^{-1}z_{1,q}+\sum\limits_{j=1}^{q-1}\left(z_{-1,j}+z_{1,j} \right)}\right)^k.
	\end{equation}
	Denoting $u_i=z_{-1,i}$, $v_i=z_{1,i}$, $i=1,\dots,q$, we get from (\ref{pt}) and (\ref{ptq}) (recall that $u_q=1$) a fixed point equation
	\begin{equation}\label{fp}
		(u,v)=F(u,v),
	\end{equation} where $u=(u_1, \dots, u_{q-1})$, $v=(v_1, \dots, v_q)$ and the mapping $F: (u, v)\in\mathbb R_+^{2q-1}\to (u', v')\in \mathbb R^{2q-1}_+$ is defined as
	\begin{equation}\label{pt-}
		\begin{array}{lll}
	u'_i=\left(
	{(1-\theta^{-1})\left(\theta u_i-v_i\right)+\sum\limits_{j=1}^{q-1}\left(u_j+v_j \right)+v_q+1\over \theta+\theta^{-1}v_q+\sum\limits_{j=1}^{q-1}\left(u_j+v_j \right)}\right)^k,\ \ i=1,\dots,q-1;\\[3mm]
	v'_i=\left(
		{(1-\theta^{-1})\left(\theta v_i-u_i\right)+\sum\limits_{j=1}^{q-1}\left(u_j+v_j \right)+v_q+1\over \theta+\theta^{-1}v_q+\sum\limits_{j=1}^{q-1}\left(u_j+v_j \right)}\right)^k,\ \ i=1,\dots,q-1;\\[3mm]
		v'_q=\left(
		{(1-\theta^{-1})\left(\theta v_q-1\right)+\sum\limits_{j=1}^{q-1}\left(u_j+v_j \right)+v_q+1\over \theta+\theta^{-1}v_q+\sum\limits_{j=1}^{q-1}\left(u_j+v_j \right)}\right)^k.
		\end{array}
	\end{equation}
Recall that a set $A$ is invariant for the operator $F$, if $F(A)\subset A$.

For a subset $\emptyset\ne M\subset \{1, \dots, q-1\}$ introduce
$$\mathcal I_M=\{(u,v)\in \mathbb R^{2q-1}_+: u_i=1, v_i=v_q, \forall i\in M\}.$$
$$\mathcal J_M=\{(u,v)\in \mathbb R^{2q-1}_+: u_i=u_j, v_i=v_j, \ \ \forall i,j\in M\}.$$
%
%$$\mathcal K_{M}=\{(u,v)\in \mathbb R^{2q-1}_+: u_i=v_j, u_j=v_i,  \ \ \forall i,j\in M\}.$$

Note that all these sets are invariant with respect to $F$ for any $M\subset \{1, \dots, q-1\}$.

Thus we know that $u_i=1$, $v_i=v_q$ satisfies $i$th equation (for $u$ and $v$) of the system (\ref{pt-}) for each $i=1,2,\dots,q-1$. Without loss of generality we can take a non-negative integer $0\leq m\leq q-1$, and assume
\begin{equation}\label{uvq}
	u_j=1, \, v_j=v_q, \, j=m+1,\dots, q.
	\end{equation}

\subsection{Some general results for arbitrary $k\geq 2$}
In this subsection we give some solutions of (\ref{fp}) for arbitrary $k\geq 2$. In the next subsection we consider the case $k=2$ and give complete analysis of (\ref{fp}) under assumption (\ref{uvq}).

{\bf Case $m=0$}. In this case a solution of (\ref{fp}) has form
$(1,\dots,1, v_q, \dots, v_q)\in \mathbb R^{2q-1}_+$ where $v_q$ (denoted by $w$) is a solution of
\begin{equation}\label{kap}
	w=\left(  {\Theta w+1\over w+\Theta} \right)^k, \ \ \mbox{where} \ \ \Theta={\theta+q-1\over \theta^{-1}+q-1}.
\end{equation}
Note that $\Theta(\theta)>0$ for any $\theta>0$.
\begin{lemma}\label{m0} Let $k\geq 2$. The equation (\ref{kap}) has unique solution $w=1$ if $0<\Theta\leq {k+1\over k-1}$ and it has three solutions if $\Theta> {k+1\over k-1}$.
\end{lemma}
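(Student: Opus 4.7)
The plan is to reduce everything to a sign analysis of the derivative of $h(w) := \log f(w) - \log w$, where $f(w) = \bigl((\Theta w + 1)/(w+\Theta)\bigr)^k$. The positive solutions of (\ref{kap}) are exactly the zeros of $h$ on $(0,\infty)$; note that $h(1)=0$ for all $\Theta>0$, $h(w)\to +\infty$ as $w\to 0^+$, and $h(w)\to -\infty$ as $w\to +\infty$, so the number of zeros is controlled by the monotonicity pattern of $h$.

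A direct differentiation yields
$$h'(w) = \frac{N(w)}{w(\Theta w+1)(w+\Theta)}, \qquad N(w) := -\Theta w^2 + \bigl[(k-1)\Theta^2 - (k+1)\bigr]w - \Theta,$$
so $\operatorname{sgn} h'(w) = \operatorname{sgn} N(w)$ on $(0,\infty)$. The quadratic $N$ opens downward, its product of roots equals $1$, and a short factorization gives $N(1) = (k-1)(\Theta+1)\bigl(\Theta - \tfrac{k+1}{k-1}\bigr)$. Moreover, at the critical value $\Theta = \tfrac{k+1}{k-1}$ the quadratic degenerates to the perfect square $N(w)=-\Theta(w-1)^2$.

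For $0<\Theta\le\tfrac{k+1}{k-1}$ I claim $N\le 0$ on $(0,\infty)$, and hence $h$ is strictly decreasing with unique zero $w=1$. The verification splits into three sub-regimes: when $\Theta\le 1$ the coefficient $(k-1)\Theta^2-(k+1)$ is negative, so the sum of the roots of $N$ is negative, and combined with product $1$ both roots are negative; when $1<\Theta<\tfrac{k+1}{k-1}$ a short computation shows the discriminant of $N$ is negative, so $N$ has no real roots and keeps its sign; and the borderline is handled by the perfect square. For $\Theta>\tfrac{k+1}{k-1}$, on the other hand, the quadratic $N$ has two positive roots $w_-<1<w_+$ with $w_+=1/w_-$; hence $h$ is strictly decreasing on $(0,w_-)$, strictly increasing on $(w_-,w_+)$, and strictly decreasing on $(w_+,\infty)$. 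Since $h(1)=0$ and $1\in(w_-,w_+)$, we must have $h(w_-)<0<h(w_+)$, and combining with the boundary limits at $0$ and $\infty$ yields exactly one zero of $h$ in each of the three intervals, producing three solutions of (\ref{kap}).

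The main obstacle is the sub-critical case: the three sub-regimes $\Theta\le 1$, $1<\Theta<\tfrac{k+1}{k-1}$, and $\Theta=\tfrac{k+1}{k-1}$ each require a slightly different argument for why $N\le 0$ (negative-sum/positive-product of roots, negative discriminant, and perfect square, respectively), even though the conclusion is uniform. The super-critical case, by contrast, falls out cleanly from the product-of-roots identity $w_+w_-=1$ together with the fact that the known zero $w=1$ must lie in the unique interval of increase.
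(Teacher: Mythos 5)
Your proof is correct, but it takes a genuinely different route from the paper's. The paper disposes of $\Theta<1$ by noting $f$ is decreasing, and for $\Theta>1$ substitutes $u=w^{1/k}$ to obtain the polynomial $u^{k+1}-\Theta u^k+\Theta u-1=0$, invokes Descartes' rule of signs to cap the number of positive roots at three, and then decides between one and three solutions by a fixed-point iteration argument keyed to the stability of $w=1$, i.e.\ the sign of $f'(1)-1=k\frac{\Theta-1}{\Theta+1}-1$ (which yields the same critical value $\Theta_c=\frac{k+1}{k-1}$). Your argument replaces both ingredients with a single sign analysis of $h'=N/\bigl(w(\Theta w+1)(w+\Theta)\bigr)$: the downward quadratic $N$, with root product $1$ and $N(1)=(k-1)(\Theta+1)\bigl(\Theta-\frac{k+1}{k-1}\bigr)$, simultaneously produces the critical value, the exact monotonicity pattern of $h$, and — together with the boundary limits $h(0^+)=+\infty$, $h(+\infty)=-\infty$ — an exact count of zeros in each regime. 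This is self-contained and in one respect tighter: the paper's assertion that $f'(1)<1$ forces $\lim_n f^{(n)}(x)=1$ for \emph{all} $x>0$ is a global statement that the local stability computation alone does not justify (it implicitly leans on the Descartes bound to exclude further fixed points), whereas your subcritical case establishes $N\le 0$ on $(0,\infty)$ directly. The only places to write out in full are the two computations you flag as ``short'': the discriminant of $N$ factors as $N(1)\cdot(\Theta-1)\bigl((k-1)\Theta+(k+1)\bigr)$, which is indeed negative for $1<\Theta<\frac{k+1}{k-1}$; and in the regime $\Theta\le 1$ one should add that whether the roots of $N$ are real (then both negative, by the sign of the sum and the product $1$) or complex, $N<0$ on $(0,\infty)$ since the leading coefficient is $-\Theta<0$.
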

\begin{proof}
	Note that $w=1$ is a solution of (\ref{kap}) independently on parameters. Define function
	$$f(x)=\left(  {\Theta x+1\over x+\Theta} \right)^k.$$

{\it Case: $\Theta<1$}. In this case (\ref{kap}) has unique solution, because $f(x)$ is decreasing in $(0, +\infty)$.

{\it Case: } $\Theta>1$. Denote $u=\sqrt[k]{w}$.
Rewrite (\ref{kap}) as
\begin{equation}\label{fd} u^{k+1}-\Theta u^k+\Theta u-1=0.
	\end{equation}
It is well known (see \cite{Pra}, p.28) that the number of positive roots of a polynomial
does not exceed the number of sign changes of its coefficients\footnote{This is known as Descartes rule, which states that if the nonzero terms of a single-variable polynomial with real coefficients are ordered by descending variable exponent, then the number of positive roots of the polynomial is either equal to the number of sign changes between consecutive (nonzero) coefficients, or is less than it by an even number. A root of multiplicity $n$ is counted as $n$ roots. 	In particular, if the number of sign changes is zero or one, the number of positive roots equals the number of sign changes.}.
By this rule, the equation (\ref{fd}) has up to 3 positive roots. We show now that it has exactly three roots.

 Note that the function $f(x)$ for $x>0$ is increasing and bounded. We have
	\begin{equation}\label{sr4.5}
		{d\over dx}f(1)=f'(1)=k{\Theta-1\over \Theta+1}>0
	\end{equation}
	and
	$$f'(1)=1, \ \ \mbox{gives} \ \ \Theta_c=
		{k+1\over k-1}.$$
If	$0<\Theta\leq \Theta_c$ then $f'(1)<1$, the solution $x=1$	is a stable fixed point of the map $f(x)$, and $\lim_{n\to\infty}f^{(n)}(x) = 1$, for any $x > 0$.
	Here, $f^{(n)}$ is the $n$th iterate
	of the above map $f(x)$. Therefore, $1$ is the unique positive solution.
	On the other hand, under  $\Theta> \Theta_c$, the fixed point $1$ is unstable.
	Iterates $f^{(n)}(x)$ remain for $x>1$, monotonically increasing  and hence converge to a limit, $z^*\geq 1$ which solves
	(\ref{kap}). However, $z^*>1$ as $1$ is unstable. Similarly, for  $x<1$ one gets a solution  $0<z_*<1$.
	This completes the proof.
\end{proof}
Since $\Theta(\theta)$ is a monotone increasing function of $\theta>0$, from $\Theta(\theta)=\Theta_c$ we find our first critical value:
\begin{equation}\label{theta1}
	\theta_{{\rm c},0}\equiv \theta_{{\rm c},0}(k,q):={q-1+\sqrt{k^2+q(q-2)}\over k-1}.
	\end{equation}
Let us formulate the result:

{\bf Result 1.} {\it If $k\geq 2$ and  $\theta> \theta_{{\rm c},0}$ then equation (\ref{fp}) has  three solutions:}
$$x_1=(1, 1, \dots , 1), \ \ x_2=(1,\dots,1; z_*, \dots, z_*), \ \ x_3=(1,\dots,1; z^*, \dots, z^*).$$

Both solutions $z_*$ and $z^*$ are functions of $k\geq 2$, $\theta\geq 	\theta_{{\rm c},0}$ and $q\geq 2$. In Fig. \ref{z*}, we give graphs of these functions for $k=2$ and $q=5$.
\begin{figure}[h]
	\begin{center}
		\includegraphics[width=7.3cm]{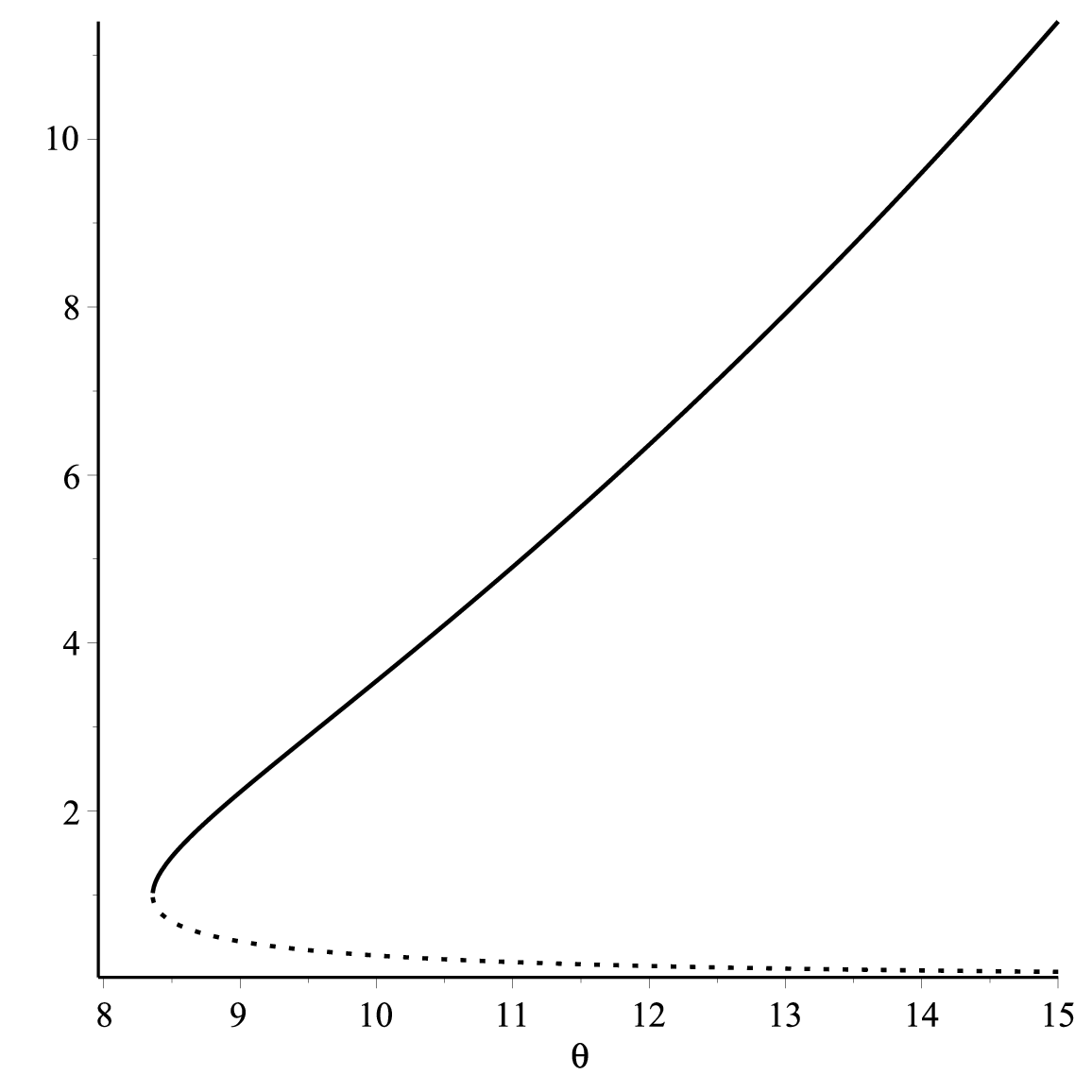}
	\end{center}
	\caption{The graphs of $z_*$ (doted curve) and $z^*$ (solid curve) for $q=5$, $\theta>	\theta_{{\rm c},0}$.}\label{z*}
\end{figure}

{\bf Case $1\leq m\leq q-1$.} In this case from (\ref{fp}), by (\ref{uvq}) we get (as fixed point equation for the operator $F$ given by (\ref{pt-})):
	\begin{equation}\label{te}
	\begin{array}{lll}
		u_i=\left(
		{(1-\theta^{-1})\left(\theta u_i-v_i\right)+\sum\limits_{j=1}^{m}\left(u_j+v_j \right)+(q-m)(v_q+1)\over \theta+q-m-1+(q-m-1+\theta^{-1})v_q+\sum\limits_{j=1}^{m}\left(u_j+v_j \right)}\right)^k,\ \ i=1,\dots,m;\\[3mm]
		v_i=\left(
		{(1-\theta^{-1})\left(\theta v_i-u_i\right)+\sum\limits_{j=1}^{m}\left(u_j+v_j \right)+(q-m)(v_q+1)\over \theta+q-m-1+(q-m-1+\theta^{-1})v_q+\sum\limits_{j=1}^{m}\left(u_j+v_j \right)}\right)^k,\ \ i=1,\dots,m;\\[3mm]
		v_q=\left(
		{(1-\theta^{-1})\left(\theta v_q-1\right)+\sum\limits_{j=1}^{m}\left(u_j+v_j \right)+(q-m)(v_q+1)\over \theta+q-m-1+(q-m-1+\theta^{-1})v_q+\sum\limits_{j=1}^{m}\left(u_j+v_j \right)}\right)^k.
	\end{array}
\end{equation}
But analysis of this system is very complicated.
Let us consider it on invariant sets
for  $M=\{1,\dots,m\}$

Consider $\mathcal J_M$, on this set we have
\begin{equation}\label{sol}
	u_1=u_2=\dots=u_m=u, \ \ v_1=v_2=\dots=v_m=v, \ \ v_q=w.
\end{equation}
The system (\ref{te}) is reduced to
	\begin{equation}\label{tej}
	\begin{array}{lll}
		u=\left(
		{(\theta+m-1)u+(\theta^{-1}+m-1)v+(q-m)w+(q-m)\over mu+mv+(\theta^{-1}+q-m-1)w+ \theta+q-m-1}\right)^k,\\[3mm]
		v=\left(
		{(\theta^{-1}+m-1)u+(\theta+m-1)v+(q-m)w+(q-m)\over mu+mv+(\theta^{-1}+q-m-1)w+ \theta+q-m-1}\right)^k,\\[3mm]
		w=\left(
		{mu+mv+(\theta+q-m-1)w+(\theta^{-1}+q-m-1) \over mu+mv+(\theta^{-1}+q-m-1)w+ \theta+q-m-1}\right)^k.
	\end{array}
\end{equation}
Let $(u,v,w)$ be a solution to (\ref{tej}). Clearly it depends on all parameters $k\geq 2$, $q\geq 2$, $\theta>0$ and $m=0,1,\dots, q-1$.

Denote by $(u(m), v(m), w(m))$ a solution of (\ref{tej}) corresponding to $m$.

\begin{pro}\label{po} If $(u(m_1), v(m_1), w(m_1))$  is a solution to (\ref{tej}) with $m=m_1$ then $({1\over u(m_1)}, {w(m_1)\over u(m_1)}, {v(m_1)\over u(m_1)})$ is a
	solution to (\ref{tej}) with $m=q-m_1$.
\end{pro}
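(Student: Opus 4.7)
The plan is to verify Proposition \ref{po} by a direct algebraic substitution that exploits the symmetric structure of the system (\ref{tej}). To streamline the computation, I would first abbreviate the common denominator of the three equations as $D^{(m)}(u,v,w) := mu + mv + (\theta^{-1}+q-m-1)w + \theta + q - m - 1$ and their numerators as $N_u^{(m)}$, $N_v^{(m)}$, $N_w^{(m)}$, so that (\ref{tej}) reads $u=(N_u^{(m)}/D^{(m)})^k$, $v=(N_v^{(m)}/D^{(m)})^k$, $w=(N_w^{(m)}/D^{(m)})^k$. Setting $(u_1,v_1,w_1):=(u(m_1),v(m_1),w(m_1))$, $(U,V,W):=(1/u_1,\,w_1/u_1,\,v_1/u_1)$ and $m':=q-m_1$, the heart of the proof is to establish the four elementary identities
\begin{align*}
D^{(m')}(U,V,W) &= \tfrac{1}{u_1}\,N_u^{(m_1)}(u_1,v_1,w_1),\\
N_u^{(m')}(U,V,W) &= \tfrac{1}{u_1}\,D^{(m_1)}(u_1,v_1,w_1),\\
N_v^{(m')}(U,V,W) &= \tfrac{1}{u_1}\,N_w^{(m_1)}(u_1,v_1,w_1),\\
N_w^{(m')}(U,V,W) &= \tfrac{1}{u_1}\,N_v^{(m_1)}(u_1,v_1,w_1).
\end{align*}
Each of these is a one-line manipulation: after substituting $m'=q-m_1$ together with the expressions for $U,V,W$, pull the factor $1/u_1$ out in front and match coefficients term by term.

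With these four identities in hand, the three ratios appearing in (\ref{tej}) evaluated at $(U,V,W)$ with $m=m'$ simplify, via cancellation of the $1/u_1$ prefactors, to $D^{(m_1)}/N_u^{(m_1)} = u_1^{-1/k}$, $N_w^{(m_1)}/N_u^{(m_1)} = (w_1/u_1)^{1/k}$ and $N_v^{(m_1)}/N_u^{(m_1)} = (v_1/u_1)^{1/k}$, where the right-hand sides follow from the hypothesis that $(u_1,v_1,w_1)$ already solves (\ref{tej}) at $m=m_1$. Raising to the $k$-th power returns exactly $U$, $V$, $W$ respectively, proving that $(U,V,W)$ is indeed a solution to (\ref{tej}) at the reflected parameter $m=q-m_1$.

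The proof is essentially bookkeeping, and the only real substance is the following structural observation, which I would emphasize in the write-up: under $m\mapsto q-m$ the coefficients $\theta+m-1$ and $\theta^{-1}+m-1$ appearing in $N_u^{(m)}$ and $N_v^{(m)}$ transform into $\theta+q-m-1$ and $\theta^{-1}+q-m-1$, which are precisely the coefficients that occur inside $D^{(m)}$ and $N_w^{(m)}$. The asymmetry between $\theta$ and $\theta^{-1}$ in the $u$/$v$ slots is thus compensated exactly by the transposition $v_1 \leftrightarrow w_1$ built into the reflected triple $(1/u_1,\,w_1/u_1,\,v_1/u_1)$. Hence the main (mild) obstacle is simply to maintain a clean ledger of the four $\theta$/$\theta^{-1}$ factors during the coefficient comparison, and no deeper argument is required.
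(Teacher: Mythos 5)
Your proposal is correct: all four identities relating $D^{(m')},N_u^{(m')},N_v^{(m')},N_w^{(m')}$ at $(1/u_1,\,w_1/u_1,\,v_1/u_1)$ to $N_u^{(m_1)},D^{(m_1)},N_w^{(m_1)},N_v^{(m_1)}$ at $(u_1,v_1,w_1)$ check out, and the conclusion follows as you state. This is essentially the same direct-substitution verification as the paper's proof, just organized more explicitly.
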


\begin{proof} For solution $(u(m_1), v(m_1), w(m_1))$ we denote
\begin{equation}\label{tilde}
	\tilde u(m_1)={1\over u(m_1)}, \tilde v(m_1)={w(m_1)\over u(m_1)}, \tilde w(m_1)={v(m_1)\over u(m_1)}
\end{equation}
 and show that
these numbers satisfy (\ref{tej}) for $m=q-m_1$. Indeed, in case $m=q-m_1$ the first equation of (\ref{tej}) is
\begin{equation}\label{bir} \tilde u=\left(
{(\theta+q-m_1-1)\tilde u+(\theta^{-1}+q-m_1-1)\tilde v+m_1\tilde w+m_1\over (q-m_1)\tilde u+(q-m_1)\tilde v+(\theta^{-1}+m_1-1)\tilde w+ \theta+m_1-1}\right)^k.
\end{equation}
Replacing $\tilde u$, $\tilde v$ and $\tilde w$ by the values given in (\ref{tilde}) one can see that (\ref{bir}) becomes the first equation of (\ref{tej}) with $m=m_1$. The other equations can be checked similarly.
\end{proof}

Let $M\subset \{1,\dots, q-1\}$, with $|M|=m$. Then we denote the corresponding solution of (\ref{tej})
by
$$r(M):=(u(M), v(M), w(M)).$$
It is clear that the solution  only depends on the cardinality of $M$.

Denote by $\mu_{r(M)}$ the Gibbs measure which corresponds (by Theorem \ref{ep}) to solution $r(M)=(u(M), v(M), w(M))$.

The following proposition gives relation between Gibbs measures corresponding to solutions for $M$ and  $M^c=\{1,\dots,q\}\setminus M$.

\begin{pro}\label{tp} For any finite $\Lambda\subset V$ and any $s_\Lambda\in \{-1,1\}^\Lambda$, $\sigma_\Lambda\in \{1,\dots,q\}^\Lambda$ we have
	\begin{equation}\label{mu}
		\mu_{r(M)}(s_\Lambda,\sigma_\Lambda)=\mu_{r(M^c)}(s_\Lambda,\sigma_\Lambda),
	\end{equation}
	where
	\begin{equation}\label{rM}
		r(M^c)=(u(M^c), v(M^c), w(M^c))=\left({1\over u(M)}, {w(M)\over u(M)}, {v(M)\over u(M)}\right).
		\end{equation}
\end{pro}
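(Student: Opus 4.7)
The plan is to reduce (\ref{mu}) to showing that the boundary fields $h^{M}$ and $h^{M^c}$ defining the two measures differ by a global additive constant. Observe from (\ref{p*}) that the finite-volume distribution $\mu_n$ depends on the collection $\{h_{\epsilon,i,x}\}_{x\in W_n}$ only up to such a shift: adding a common constant $c$ to every $h_{\epsilon,i,x}$ multiplies the unnormalized weight by a factor independent of $(s_n,\sigma_n)$, which is then absorbed into $Z_n$. Hence it will suffice to write the two $h$-fields explicitly and subtract.

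Write $(u,v,w):=r(M)$. Using the normalization $h^{M}_{-1,q}=0$ forced by (\ref{zt}), the translation-invariant field attached to $\mu_{r(M)}$ is
\begin{equation*}
h^{M}_{-1,i}=(\log u)\,\mathbf{1}_{\{i\in M\}},\qquad h^{M}_{1,i}=(\log v)\,\mathbf{1}_{\{i\in M\}}+(\log w)\,\mathbf{1}_{\{i\notin M\}}.
\end{equation*}
For $\mu_{r(M^c)}$ the special set is $M^c\ni q$, so the default normalization $h_{-1,q}=0$ is unavailable; I will instead fix an arbitrary $q^{*}\in M$ and set $h^{M^c}_{-1,q^{*}}=0$. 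With $(U,V,W)=r(M^c)=(1/u,w/u,v/u)$ from (\ref{rM}), the corresponding field becomes
\begin{equation*}
h^{M^c}_{-1,i}=(-\log u)\,\mathbf{1}_{\{i\in M^c\}},\qquad h^{M^c}_{1,i}=(\log w-\log u)\,\mathbf{1}_{\{i\in M^c\}}+(\log v-\log u)\,\mathbf{1}_{\{i\in M\}}.
\end{equation*}

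A direct case check (splitting on $i\in M$ versus $i\in M^c$, combined with $\epsilon=\pm 1$) then yields the uniform identity $h^{M^c}_{\epsilon,i}-h^{M}_{\epsilon,i}=-\log u$ in all four cases. By the shift-invariance noted in the first paragraph, this forces $\mu_{r(M^c)}=\mu_{r(M)}$, which is (\ref{mu}). The only delicate point is the forced change of reference index for $h^{M^c}$, since $q\in M^c$ rules out the default normalization used for $h^M$; once that small piece of bookkeeping is handled, the proof reduces to a one-line subtraction.
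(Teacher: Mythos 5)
Your proof is correct and is essentially the paper's own argument written in logarithmic coordinates: the paper expresses the marginal on $\Lambda$ as a product of the boundary weights $u,v,w$ over $\partial\Lambda$ and uses the identity $\sum_{x\in\partial\Lambda}{\mathbb I}(s(x)=\epsilon,\sigma(x)\in M)=|\partial\Lambda|-\sum_{x\in\partial\Lambda}{\mathbb I}(s(x)=\epsilon,\sigma(x)\in M^c)$ to pull out the configuration-independent factor $u^{|\partial\Lambda|}$, which is precisely your uniform additive shift $-\log u$ of the $h$-field being absorbed into the normalizing constant. Your four-case verification and your handling of the reference index (choosing $q^{*}\in M$ since $q\in M^c$) are both accurate, so nothing is missing.
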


\begin{proof} Let cardinality of $M$ be $m$, i.e., $|M|=m$.
	By Proposition \ref{po} we know that if $r(M)$ is a solution to (\ref{tej}) then $r(M^c)$ is a solution to the system for $M^c$, with cardinality $|M^c|=q-m$.
	Let ${\mathbb I}$ be the indicator function.
	
Denote $\partial \Lambda=\{x\in V\setminus \Lambda: \exists y\in \Lambda,  \langle x,y\rangle\}$, then by definition of measure we have
	$$  \mu_{r(M)}(s_\Lambda, \sigma_\Lambda)={\exp\left(-\beta H(s_\Lambda, \sigma_\Lambda)\right)\over Z_\Lambda(r(M))}(u(M))^{\sum_{x\in \partial \Lambda}{\mathbb I}(s(x)=-1,\sigma(x)\in M)}\times $$
	\begin{equation}\label{mum}	(v(M))^{\sum_{x\in \partial \Lambda}{\mathbb I}(s(x)=1,\sigma(x)\in M)}(w(M))^{\sum_{x\in \partial \Lambda}{\mathbb I}(s(x)=1,\sigma(x)\in M^c)},
		\end{equation}
			where
		$$Z_\Lambda(r(M))=\sum_{\rho_\Lambda,\varphi_\Lambda}\exp\left(-\beta H(\rho_\Lambda,\varphi_\Lambda)\right)(u(M))^{\sum_{x\in \partial \Lambda}{\mathbb I}(\rho(x)=-1,\varphi(x)\in M)}\times $$
		$$(v(M))^{\sum_{x\in \partial \Lambda}{\mathbb I}(\rho(x)=1,\varphi(x)\in M)}(w(M))^{\sum_{x\in \partial \Lambda}{\mathbb I}(\rho(x)=1,\varphi(x)\in M^c)}.$$
	Now we use the following formula:
	$$\sum_{x\in \partial \Lambda}{\mathbb I}(s(x)=\epsilon,\sigma(x)\in M)=|\partial \Lambda|-\sum_{x\in \partial \Lambda}{\mathbb I}(s(x)=\epsilon, \sigma(x)\in M^c).$$
	Then after dividing to constants depending on $|\partial \Lambda|$, the RHS of (\ref{mum}) can be written as
	$${\exp\left(-\beta H(s_\Lambda, \sigma_\Lambda)\right)\over Z_\Lambda(r(M^c))}\left({1\over u(M)}\right)^{\sum_{x\in \partial \Lambda}{\mathbb I}(s(x)=-1,\sigma(x)\in M^c)}\times $$
$$	\left({w(M)\over u(M)}\right)^{\sum_{x\in \partial \Lambda}{\mathbb I}(s(x)=1,\sigma(x)\in M^c)}\left({v(M)\over u(M)}\right)^{\sum_{x\in \partial \Lambda}{\mathbb I}(s(x)=1,\sigma(x)\in M)}
=\mu_{r(M^c)}(s_\Lambda, \sigma_\Lambda).$$
\end{proof}
It is convenient to write a solution to (\ref{fp}) belonging  in $\mathcal J_M$ and satisfying (\ref{sol}) as two-row matrix:
\begin{equation}\label{mat}\left(\begin{array}{cccccccc}
	u & u & \dots & u & 1 & 1 & \dots & 1\\
	v & v & \dots & v & w & w & \dots & w\end{array}\right),\end{equation}
where $u$ (and $v$) is written $m$ times, and 1 (and $w$)  $q-m$ times.

The following is a corollary of Propositions \ref{po} and \ref{tp}.
\begin{cor}\label{c1} TISGM corresponds to a solution of (\ref{tej}) with some $m\leq [q/2]$ coincides with a TISGM corresponding to a solution of (\ref{tej}) with $m$ replaces by $q-m$. (Therefore, we only consider the case $m\leq [q/2]$.)
	Moreover, for a given $m\leq [q/2]$,
	a fixed solution $(u, v, w)$ to (\ref{tej}) generates ${q\choose m}$ solutions to (\ref{fp}) by permuting columns of (\ref{mat}) and, in case $(u-v)^2+(w-1)^2\ne 0$, exchanging rows generates  $2{q\choose m}$ TISGMs.
\end{cor}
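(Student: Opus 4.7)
The result combines three ingredients: the $S_q$-symmetry of the Hamiltonian (\ref{ph}) in the Potts labels, the global spin-flip symmetry $s\to -s$ (also a symmetry of (\ref{ph})), and Propositions \ref{po}--\ref{tp}, which track how these symmetries interact with the normalization $u_q=1$ built into (\ref{fp}). The first assertion is immediate from what has already been proved: if $(u,v,w)=r(M)$ solves (\ref{tej}) with $|M|=m$, then by Proposition \ref{po} the triple $(1/u,w/u,v/u)=r(M^c)$ solves (\ref{tej}) with $|M^c|=q-m$, and by Proposition \ref{tp} the corresponding Gibbs measures coincide, so one may restrict to $m\leq[q/2]$.

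\textbf{Counting by column permutations.} Lifted to the un-normalized space $\mathbb R^{2q}_+$, the operator corresponding to $F$ commutes with the permutation action of $S_q$ on the $q$ Potts indices (the Hamiltonian is permutation-symmetric in those labels). The $S_q$-orbit of the matrix (\ref{mat}), whose stabilizer is $S_m\times S_{q-m}$, consists of $\binom{q}{m}$ distinct un-normalized fixed points, indexed by the size-$m$ subset $\widetilde M\subset\{1,\dots,q\}$ carrying the $(u,v)^T$ column. Enforcing $u_q=1$ then splits these $\binom{q}{m}$ matrices into two cases. If $q\notin\widetilde M$, the normalization is automatic and we obtain $\binom{q-1}{m}$ solutions of (\ref{fp}) in the form (\ref{mat}) indexed by size-$m$ subsets of $\{1,\dots,q-1\}$. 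If $q\in\widetilde M$, I divide the entire matrix by $u$; the resulting matrix has the form (\ref{mat}) with the type-A column $(1/u,w/u)^T$ sitting at the $q-m$ positions of $\{1,\dots,q-1\}\setminus(\widetilde M\setminus\{q\})$, which is exactly the algebraic transformation appearing in Proposition \ref{po}, and contributes $\binom{q-1}{m-1}$ further solutions. Pascal's identity $\binom{q-1}{m}+\binom{q-1}{m-1}=\binom{q}{m}$ gives the claimed count, and since the two families carry type-A blocks of different cardinalities ($m$ versus $q-m$), their members are pairwise distinct.

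\textbf{Row exchange and the main obstacle.} The spin flip $s\to -s$ preserves $H$ and therefore sends $F$-fixed points to $F$-fixed points; on the matrix (\ref{mat}) it swaps the two rows, and renormalizing back to $u_q=1$ amounts to dividing the swapped matrix by its new top-right entry, which is $w$. This produces the candidate $(u',v',w')=(v/w,\,u/w,\,1/w)$. Equating $(u',v',w')=(u,v,w)$ forces $u=v$ and $w=1$, which is exactly the condition $(u-v)^2+(w-1)^2=0$. Off this locus the flipped triple lies in a column orbit disjoint from the original one, and rerunning the previous paragraph on it supplies an additional $\binom{q}{m}$ solutions of (\ref{fp}), giving $2\binom{q}{m}$ TISGMs in total. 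The principal care required — and the only place where the argument could go wrong — is the bookkeeping of the normalization $u_q=1$ through the two group actions: once Propositions \ref{po} and \ref{tp} are invoked, the Pascal-identity split above is precisely the combinatorial step that converts the raw $S_q$-orbit count into the normalized count $\binom{q}{m}$, and the elementary identification of the fixed locus of the row-flip isolates the degenerate configurations at which the doubling to $2\binom{q}{m}$ fails.
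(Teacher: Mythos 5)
Your proposal is correct and follows essentially the route the paper intends: the paper states this corollary without proof, presenting it as an immediate consequence of Propositions \ref{po} and \ref{tp} together with the $S_q$-symmetry in the Potts labels and the spin-flip symmetry in the Ising label, which is exactly the skeleton of your argument. Your Pascal-identity bookkeeping of the normalization $u_q=1$ and your explicit identification of the fixed locus $u=v$, $w=1$ of the row flip are careful elaborations of details the paper leaves implicit (the only cosmetic caveat being that your ``different cardinalities'' distinctness remark needs the subset-indexing argument instead when $m=q/2$).
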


Denote by $\mathcal S_m$ the set of all solutions of system (\ref{tej}) for fixed $m$.

\begin{pro} A vector $(u,v,w)$ is a common solution of the system (\ref{tej}) for fixed $k$, $q$ and $\theta$, but distinct values of $m$, if and only if
	$$(u,v,w)\in \{(1, 1, 1), (1, z_*, z_*), (z_*, 1, z_*), (1, z^*, z^*), (z^*, 1, z^*)\},$$
	where $z_*, z^*$ are solutions to (\ref{kap}).
	 That is
	$$\bigcap_{m=0}^{[q/2]} \mathcal S_m=\mathcal S_0.$$
\end{pro}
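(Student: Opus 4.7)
The plan is to isolate the $m$-dependence in (\ref{tej}) and show that requiring a solution to persist across two distinct values of $m$ forces a single algebraic constraint, after which the enumeration is short. The key observation is that the three numerators $N_u(m), N_v(m), N_w(m)$ and the common denominator $D(m)$ on the right-hand side of (\ref{tej}) are all affine in $m$ with the \emph{same} slope $A := u+v-w-1$; explicitly, $N_\bullet(m) = N_\bullet(0) + mA$ and $D(m) = D(0) + mA$ for each $\bullet\in\{u,v,w\}$. Consequently, if $A=0$ the system is $m$-independent, so any solution at $m=0$ automatically lies in every $\mathcal S_m$.

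For necessity, I would suppose $(u,v,w) \in \mathcal S_m \cap \mathcal S_{m'}$ with $m \neq m'$. Taking positive $k$-th roots in each equation of (\ref{tej}) yields $N_\bullet(m)/D(m) = N_\bullet(m')/D(m')$, and cross-multiplying the affine expansions collapses each identity to
$$ (m'-m)\,A\,(N_\bullet(0) - D(0)) = 0. $$
Assuming $A \neq 0$ for contradiction, I would conclude $N_u(0) = N_v(0) = N_w(0) = D(0)$. The third of these reduces to $(\theta - \theta^{-1})(w - 1) = 0$, forcing $w = 1$; the difference of the first two gives $(\theta - \theta^{-1})(u - v) = 0$, hence $u = v$; and substituting back into $N_u(0) = D(0)$ then yields $u = 1$. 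Thus $u = v = w = 1$ and $A = 0$, a contradiction. So $A = 0$ is necessary.

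Under $A = 0$, the system reduces to the $m = 0$ equations together with the side-constraint $u + v = w + 1$. Substituting $v = w + 1 - u$ in the first equation and $u = w + 1 - v$ in the second, both collapse to the same one-variable fixed-point equation $x = g(x)$ with
$$ g(x) = \left( \frac{(\theta - \theta^{-1})\,x + (\theta^{-1}+q-1)(w+1)}{(\theta^{-1}+q-1)\,w + \theta+q-1} \right)^k. $$
The substitution $y = x^{1/k}$ turns this into a polynomial $ay^k - cy + b = 0$ with $a,b,c > 0$; the coefficient sequence has exactly two sign changes, so by Descartes' rule it admits at most two positive roots. A direct check shows $g(1) = 1$ and, via the identity defining (\ref{kap}), that $g(w) = w$ as well, so the positive fixed points of $g$ are precisely $\{1,w\}$ when $w \neq 1$. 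Since the third equation of the $m=0$ system is exactly (\ref{kap}), we have $w \in \{1, z_*, z^*\}$. Enumerating pairs $(u,v) \in \{1,w\}^2$ subject to $u+v=w+1$: the pairs $(1,1)$ and $(w,w)$ force $w = 1$, giving $(1,1,1)$; the pairs $(1,w)$ and $(w,1)$ satisfy the constraint for every $w$, giving the four triples with $w \in \{z_*, z^*\}$. The hard part will be the degenerate case $w = 1$, where Descartes still permits a second positive fixed point $u_1 \ne 1$ of $g$; however, the constraint $u+v = 2$ combined with $u, v \in \{1, u_1\}$ has no solution involving $u_1$ (each case forces $u_1 = 1$). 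The converse direction is immediate: each of the five listed triples satisfies $A = 0$ and the $m = 0$ system by direct verification, hence lies in $\mathcal S_m$ for every $m$.
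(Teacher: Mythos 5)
Your proposal is correct and follows essentially the same route as the paper: both hinge on the observation that numerators and denominator in (\ref{tej}) are affine in $m$ with the common slope $A=u+v-w-1$, both force $A=0$ for a common solution (your cross-multiplication identity $(m'-m)A\,(N_\bullet(0)-D(0))=0$ is just a repackaging of the paper's monotonicity-in-$m$ argument and its case analysis $A\ne 0$, $B=C$ leading to $w=1$, $u=v$, $u=1$), and both then reduce the constrained $m=0$ system to (\ref{kap}) together with the one-variable equation $az^k-z+b=0$, identifying the second positive root as $w^{1/k}$. The only cosmetic slip is the claim that all coefficients of that polynomial are positive (for $\theta<1$ one has $\theta-\theta^{-1}<0$), but this only strengthens the root count and does not affect the conclusion.
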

\begin{proof} Denote by $(u(m), v(m), w(m))$ a solution of (\ref{tej}) corresponding to $m$.
		
	Rewrite (\ref{tej}) as
		\begin{equation}\label{jm}
		\begin{array}{lll}
			u=\left(
			{(u+v-w-1)m+(\theta-1)u+(\theta^{-1}-1)v+q(w+1)\over (u+v-w-1)m+(\theta^{-1}+q-1)w+ \theta+q-1}\right)^k=\left({Am+\tilde B\over Am+C}\right)^k=\phi(m),\\[3mm]
			v=\left(
			{(u+v-w-1)m+(\theta^{-1}-1)u+(\theta-1)v+q(w+1)\over (u+v-w-1)m+(\theta^{-1}+q-1)w+ \theta+q-1}\right)^k=\left({Am+\hat B\over Am+C}\right)^k=\psi(m),\\[3mm]
			w=\left(
			{(u+v-w-1)m+(\theta+q-1)w+\theta^{-1}+q-1\over (u+v-w-1)m+(\theta^{-1}+q-1)w+ \theta+q-1}\right)^k=\left({Am+B\over Am+C}\right)^k=\varphi(m).
		\end{array}
	\end{equation}

From (\ref{jm}) it is clear that if $u+v=w+1$ (i.e., $A=0$) then system does not depend on $m$. Moreover, the system coincides with the case $m=0$. Therefore, all solutions $(u,v,w)$ satisfying $u+v=w+1$ in $\mathcal S_m$ for any $m=0,1, \dots, [q/2].$ Now we describe all such solutions. That is we find solutions of (\ref{jm}) which satisfy $u+v=w+1$. Then (\ref{jm}) becomes:
\begin{equation}\label{js}
	\begin{array}{llll}
		u=\left(
		{(\theta-1)u+(\theta^{-1}-1)v+q(w+1)\over (\theta^{-1}+q-1)w+ \theta+q-1}\right)^k,\\[3mm]
		v=\left(
		{(\theta^{-1}-1)u+(\theta-1)v+q(w+1)\over (\theta^{-1}+q-1)w+ \theta+q-1}\right)^k,\\[3mm]
		w=\left(
		{(\theta+q-1)w+\theta^{-1}+q-1\over (\theta^{-1}+q-1)w+ \theta+q-1}\right)^k,\\[3mm]
		v=w-u+1.
	\end{array}
\end{equation}
The third equation is (\ref{kap}), which is already discussed in Lemma \ref{m0}.
It has unique solution $w=1$ if $\theta\leq \theta_{{\rm c},0}$ and three solutions $1$, $z_*$ and $z^*$ if $\theta>\theta_{{\rm c},0}$. For a solutions $w^*\in\{1, z_*, z^*\}$, using $v=w^*-u+1$ from the first equation of (\ref{js}) we get
\begin{equation}\label{um} u=\left(
{(\theta-\theta^{-1})u+(\theta^{-1}+q-1)(w^*+1)\over (\theta^{-1}+q-1)w^*+ \theta+q-1}\right)^k.
\end{equation}
Rewrite the last equation as
\begin{equation}\label{mm} az^k-z+b=0,
\end{equation}
where
$$z=u^{1/k},  \ \ a={\theta-\theta^{-1}\over (\theta^{-1}+q-1)w^*+ \theta+q-1}, \ \ b={(\theta^{-1}+q-1)(w^*+1)\over (\theta^{-1}+q-1)w^*+ \theta+q-1}.$$
Since $b>0$, the equation (\ref{mm}) has up to two solutions, one of which is always $z=u=1$. In case when (\ref{mm}) has a solution $z\ne 1$ then it is $z=\sqrt[k]{w^*}$. To see this, one can just put $z=\sqrt[k]{w^*}$ in RHS of (\ref{mm}) and check that it satisfies the equation. This can be done equivalently with equation (\ref{um}): when we put $u=w^*$ the equation (\ref{um}) becomes (\ref{kap}).

To check that a solution of other kind can not be in different classes $\mathcal S_m$, we use
$$\varphi'(m)=k\left({Am+B\over Am+C}\right)^{k-1}{A(C-B)\over (Am+C)^2}.$$
Therefore, if $A(C-B)\ne 0$ then $\varphi(m)$ is a monotone function with respect to $m$, hence $w(m_1)\ne w(m_2)$ for $m_1\ne m_2.$ In case $A(C-B)=0$ we find $A=0$ or $C=B$.

{\it Case $A\ne 0$ and $C=B$}. In this case we show that there is no common solution. Indeed, $C=B$ is
$$(\theta+q-1)w+\theta^{-1}+q-1=(\theta^{-1}+q-1)w+ \theta+q-1 \ \ \Leftrightarrow \ \ (\theta-\theta^{-1})(w-1)=0 \ \ \Leftrightarrow \ \ w=1.$$
Note that $\psi$ and $\phi$ are similar to $\varphi$.
For $w=1$ from first and second equations of (\ref{jm}), in order to have $u(m_1)=u(m_2)$ and $v(m_1)=v(m_2)$, for some $m_1\ne m_2$, we get (for $A\ne 0$) that
$$\tilde B=\hat B=C \ \ \Leftrightarrow \ \ (\theta-\theta^{-1})(u-v)=0 \ \ \Leftrightarrow \ \ u=v.$$
Consequently, by $u=v$, $w=1$ from the first equation (since $A=2(u-1)\ne 0$) we get
$$(\theta+\theta^{-1}-2)u+2q=(\theta+\theta^{-1}-2)+2q \ \ \Leftrightarrow \ \ u=1.$$
But $u=1$ makes $A=0$. This completes the proof.
\end{proof}

One can see that $u=v$ and $w=1$ satisfies (invariant) for (\ref{tej}).
Are there some solutions of system with $u\ne v$ or/and $w\ne 1$? To  answer this question we denote $\xi=u-v$ and $\eta=w-1$ then subtracting from the first equation of  (\ref{tej}) the second one, and subtracting 1 from both sides of the third equation we get:
\begin{equation}\label{xi}
	\begin{array}{ll}
	\xi=(\theta-\theta^{-1})\xi \left({A^{k-1}+A^{k-2}B+\dots +B^{k-1}\over D^{k}}\right),\\[3mm]
	\eta=(\theta-\theta^{-1})\eta \left({C^{k-1}+C^{k-2}+\dots +1\over D^{k}}\right),
	\end{array}
	\end{equation}
where
$$A=(\theta+m-1)u+(\theta^{-1}+m-1)v+(q-m)w+(q-m),$$
$$B=(\theta^{-1}+m-1)u+(\theta+m-1)v+(q-m)w+(q-m),$$
$$C=mu+mv+(\theta+q-m-1)w+(\theta^{-1}+q-m-1),$$
$$D=mu+mv+(\theta^{-1}+q-m-1)w+ \theta+q-m-1.$$
From (\ref{xi}) we find relations for $u, v, w$ to make $\xi^2+\eta^2\ne 0$.
\begin{itemize}
\item In case $\xi=0$ and $\eta\ne 0$, from (\ref{xi}) we get condition
\begin{equation}\label{eta}
	(\theta-\theta^{-1})\left(C^{k-1}+C^{k-2}+\dots +1\right)- D^{k}=0.
	\end{equation}
Here $C$ and $D$ are as above, but with $u=v$.
\item In case $\xi\ne 0$ and $\eta=0$ we get
\begin{equation}\label{eta0}
	(\theta-\theta^{-1})\left(A^{k-1}+A^{k-2}B+\dots +B^{k-1}\right)- D^{k}=0.
	\end{equation}
Here $A$, $B$ and $D$ are as above, but with $w=1$.
\item Finally, for $\xi\eta\ne 0$ we have
\begin{equation}\label{exi}
	C^{k-1}+C^{k-2}+\dots +1=A^{k-1}+A^{k-2}B+\dots +B^{k-1}=(\theta-\theta^{-1})^{-1}D^k.
\end{equation}
\end{itemize}
\begin{rk}
	Since $A, B, C, D>0$ the equalities (\ref{eta}), (\ref{eta0}) and (\ref{exi}) may be satisfied only for $\theta>1$.
\end{rk}
Let us now solve (\ref{tej}):

{\bf Case 1:} In case $u=v$ and $w=1$ we have
$$u=\left(
{(\theta+\theta^{-1}+2m-2)u+2(q-m)\over 2mu+\theta+\theta^{-1}+2q-2m-2}\right)^k.$$
Denote $z=\sqrt[k]{u}$, then the last equation can be written as
\begin{equation}\label{zk}
	2mz^{k+1}-(\theta+\theta^{-1}+2(m-1))z^k+
	(\theta+\theta^{-1}+2(q-m-1))z-2(q-m)=0.
	\end{equation}
Independently on parameters the equation (\ref{zk}) has solution $z=1$. Divide the LHS of the equation to $z-1$ and obtain
\begin{equation}\label{zkk}
	mz^k-(\tau-1)\sum_{j=1}^{k-1}z^j+q-m=0,
\end{equation}
where
\begin{equation}\label{tau}
	\tau={1\over 2}(\theta+\theta^{-1}).
	\end{equation}

Since $\theta>0$ we have $\theta+\theta^{-1}>2$, i.e., $\tau>1$. Therefore, (\ref{zkk}) has at most two positive roots.
The equation (\ref{zkk}) coincides with the equation (3.14) of \cite{KRK} where one has to change $\theta$ to $\tau$.
From results of \cite{KRK} one can deduce the following
\begin{lemma}\label{le2} For each $k\geq 2$, $m=1, 2, \dots, [q/2]$ there exists unique $\tau_{{\rm c},m}=\tau_{{\rm c},m}(k)$ such that
	\begin{itemize}
		\item[1.]  The equation (\ref{zk}) has unique solution $z=1$ if $\tau<\tau_{{\rm c},m}$, two positive solutions at $\tau=\tau_{{\rm c},m}$ and three positive solutions for $\tau>\tau_{{\rm c},m}$.
		\item[2.] If $x>0$ is a common
		solution to equation (\ref{zk}) for different values of $m$ then $x=1$.		Moreover,  $x=1$ is a solution to (\ref{zk}) iff $\tau=\tau_c={k+q-1\over k-1}$. For this critical value the second solution of (\ref{zk}) will be 1 iff $q=2m$.
		\item[3.] Critical values satisfy
	\begin{equation}\label{tt}
		 \tau_{{\rm c},1}<\tau_{{\rm c},2}<\dots<\tau_{{\rm c}, [{q\over 2}]-1}<\tau_{{\rm c},[{q\over 2}]}\leq \tau_c={k+q-1\over k-1}.
	\end{equation}
	\end{itemize}
\end{lemma}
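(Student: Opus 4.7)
The plan is to leverage the observation, made just before the lemma, that (\ref{zkk}) coincides with equation (3.14) of \cite{KRK} after the substitution $\theta \mapsto \tau$; hence all three claims can be transcribed from the analysis there. For orientation I indicate the key steps.

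First, I would recast (\ref{zkk}) as
\[
\tau - 1 = h_m(z) := \frac{m z^k + (q-m)}{z + z^2 + \cdots + z^{k-1}}, \qquad z > 0,
\]
so that positive solutions of (\ref{zkk}) correspond bijectively to preimages of $\tau-1$ under $h_m$. Since $q - m > 0$, we have $h_m(z) \to +\infty$ both as $z \to 0^+$ and (via $h_m(z) \sim m z$) as $z \to +\infty$, so $h_m$ attains a positive minimum $\mu_m$. The crucial and hardest step is to show that $h_m$ is \emph{unimodal} on $(0,\infty)$: strictly decreasing on $(0, z_m^*)$ and strictly increasing on $(z_m^*, \infty)$ for a unique $z_m^* > 0$. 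I would prove this by showing that the critical equation $f'(z) g(z) = f(z) g'(z)$, with $f(z) = m z^k + (q-m)$ and $g(z) = z + \cdots + z^{k-1}$, reduces after clearing denominators to a polynomial whose sign pattern admits at most one positive root by Descartes' rule; alternatively one can cite the corresponding step in \cite{KRK}. Once unimodality is established, part (1) follows by setting $\tau_{{\rm c},m} := 1 + \mu_m$: the level set $\{h_m = \tau - 1\}$ has cardinality $0$, $1$, or $2$ according to whether $\tau < \tau_{{\rm c},m}$, $\tau = \tau_{{\rm c},m}$, or $\tau > \tau_{{\rm c},m}$, and adjoining the root $z = 1$ that was divided out of (\ref{zk}) yields the claimed counts $1$, $2$, $3$.

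For part (2), if $x > 0$ solves (\ref{zkk}) simultaneously for $m_1 \neq m_2$, subtracting the two instances gives $(m_1 - m_2)(x^k - 1) = 0$ and hence $x = 1$. Substituting $z = 1$ into (\ref{zkk}) forces $\tau = 1 + q/(k-1) = \tau_c$. To decide when $z = 1$ is also the second solution at $\tau = \tau_c$, I compute
\[
h_m'(1) = \frac{k(2m - q)}{2(k-1)},
\]
which vanishes iff $q = 2m$; in that case $z = 1$ is simultaneously a critical point of $h_m$ and a solution of (\ref{zkk}) at height $\tau_c - 1$, giving a double root, whereas otherwise the second root is distinct from $1$.

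Finally, for the chain (\ref{tt}) I would exploit the identity
\[
h_m(z) - h_{m-1}(z) = \frac{z^k - 1}{g(z)},
\]
which is positive for $z > 1$ and negative for $z < 1$. The sign computation of $h_m'(1)$ above shows that for $1 \leq m \leq [q/2]$ the minimizer $z_m^*$ of $h_m$ satisfies $z_m^* \geq 1$, with equality only when $q = 2m$. For $m < q/2$ this gives $z_m^* > 1$ and hence
\[
\mu_m = h_m(z_m^*) > h_{m-1}(z_m^*) \geq \mu_{m-1},
\]
yielding the strict inequalities in (\ref{tt}). The bound $\tau_{{\rm c},m} \leq \tau_c$ follows from $\mu_m \leq h_m(1) = q/(k-1) = \tau_c - 1$, with equality exactly when $q$ is even and $m = q/2$. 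The main obstacle throughout is the unimodality of $h_m$; this reduces to a one-variable polynomial question carried out in \cite{KRK}, and may be imported rather than reproved from scratch.
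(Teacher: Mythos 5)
Your proposal is correct and follows the same route as the paper, which itself gives no argument beyond the observation that (\ref{zkk}) is equation (3.14) of \cite{KRK} with $\theta$ replaced by $\tau$ and then imports the conclusions. You go somewhat further by actually sketching the underlying analysis (the reformulation $\tau-1=h_m(z)$, the derivative computation $h_m'(1)=k(2m-q)/(2(k-1))$, and the comparison $h_m-h_{m-1}=(z^k-1)/g(z)$), all of which checks out, with the single substantive step — unimodality of $h_m$ — correctly identified and deferred to \cite{KRK} exactly as the paper does.
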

Recall $\theta=\exp(J\beta)$ (see (\ref{zt})). So $\theta<1$ iff $J<0$. From
	$$2\tau_{{\rm c},m}=\theta_{{\rm c},m}+{1\over \theta_{{\rm c},m}}$$ we get
	$$\theta_{{\rm c},m}=	\tau_{{\rm c},m}+\sqrt{\tau_{{\rm c},m}^2-1}, \ \ \mbox{if} \ \ J>0$$ and
	$$\tilde \theta_{{\rm c},m}={1\over  \theta_{{\rm c},m}}=	\tau_{{\rm c},m}-\sqrt{\tau_{{\rm c},m}^2-1}, \ \ \mbox{if} \ \ J<0.$$
	
	Therefore we define critical temperatures
\begin{equation}\label{Tcm}
	T_{c,m}={|J|\over \ln(\theta_{{\rm c},m})}, \ \ m=0,\dots, [{q\over 2}].\end{equation}
 Below we give explicit formula of these critical values for $k=2$. But using formula (\ref{theta1}), we can give explicit formula of
 $$T_{c,0}={J\over \ln\left({q-1+\sqrt{k^2+q(q-2)}\over k-1}\right)}, \ \ J>0. $$
 Another critical temperature is found from $\tau=\tau_c={k+q-1\over k-1}$:
 $$T_{cr}={|J|\over \ln\left({k+q-1+\sqrt{q^2+2q(k-1)}\over k-1}\right)}. $$
Since function $\tau$ (defined in (\ref{tau})) is decreasing for $\theta<1$ and increasing for $\theta>1$ one can deduce from (\ref{tt}) that critical values $T_{c,m}$ are monotone with respect to $m$.

  Now using Result 1, Lemma \ref{le2} and Corollary \ref{c1} we summarize obtained results in following theorems
\begin{thm} \label{Thm2}
	For the {\bf anti-ferromagnetic} ($J<0$) coupled Ising-Potts model (\ref{ph}) on the Cayley tree of
	order $k\geq 2$ there are critical temperatures $T_{c,m}\equiv T_{c,m}(k,q)$, $m=1,\dots,[q/2]$
	such that the following statements hold
	
	\begin{itemize}
		\item[1.] There are $[q/2]+1$ critical values such that
		$$T_{c,1}>T_{c,2}>\dots>T_{c,[{q\over 2}]-1}>T_{c,[{q\over 2}]}\geq T_{cr};$$
		\item[2.]
		If $T>T_{c,1}$ then there exists a lest one TISGM;
		
		\item[3.]
		If $T_{c,m+1}<T<T_{c,m}$ for some $m=1,\dots,[{q\over 2}]-1$ then there are at least
		$1+2\sum_{s=1}^m{q\choose s}$ TISGMs.
		
		\item[4.] If $T_{cr}\ne T<T_{c,[{q\over 2}]}$ then there are at least $2^{q}-1$ TISGMs.
		
		\item[5.] If $T=T_{cr}$ then
		the number of TISGMs is at least
		$$\left\{\begin{array}{ll}
			2^{q-1}, \ \ \mbox{if} \ \ q -odd\\[2mm]
			2^{q-1}-{q-1\choose q/2}, \ \ \mbox{if} \ \ q -even.
		\end{array}\right.;$$
		
		\item[6.] If $T=T_{c,m}$, $m=1,\dots,[{q\over 2}]$, \, ($T_{c,[q/2]}\ne T_{cr}$) then  there are at least
		$1+{q\choose m}+2\sum_{s=1}^{m-1}{q\choose s}$ TISGMs.
	\end{itemize}
	
\end{thm}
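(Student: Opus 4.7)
The plan is to enumerate the TISGMs by analyzing, for each $m \in \{0,1,\ldots,[q/2]\}$, the Case~1 solutions $(u,v,w) = (z^k,z^k,1)$ of (\ref{tej}), governed by the polynomial equation (\ref{zkk}) in $z = u^{1/k}$. In the anti-ferromagnetic regime $J<0$, $\theta = e^{J\beta} \in (0,1)$, so $\Theta(\theta) < 1$ and Lemma \ref{m0} forces the $m = 0$ branch to contribute only $w = 1$; moreover, by the Remark following (\ref{exi}), any solution of (\ref{tej}) outside Case~1 requires $\theta > 1$, which is incompatible with $J<0$. Hence all non-trivial TISGMs arise from (\ref{zkk}). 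Since $\tau = \frac{1}{2}(\theta + \theta^{-1})$ is strictly decreasing on $(0,1)$ and $\theta$ is strictly increasing in $T$ for $J<0$, the map $T \mapsto \tau(T)$ is strictly decreasing, so the chain (\ref{tt}) turns directly into statement (1) of the theorem.

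Next I invoke Lemma \ref{le2}: for each $m \in \{1,\ldots,[q/2]\}$, equation (\ref{zk}) has only the root $z = 1$ when $\tau < \tau_{c,m}$, a single non-trivial (double) root at $\tau = \tau_{c,m}$, and two distinct non-trivial roots when $\tau > \tau_{c,m}$; non-trivial roots for different $m$ are pairwise disjoint. Each non-trivial root $z$ yields a Case~1 solution for which $(u-v)^2 + (w-1)^2 = 0$, and Corollary \ref{c1} then converts it into exactly $\binom{q}{m}$ TISGMs via column permutations of the matrix (\ref{mat}). Together with the constant TISGM coming from $z = 1$, this gives statements (2), (3) and (6) immediately: for $T > T_{c,1}$ only the trivial root survives; in $(T_{c,m+1}, T_{c,m})$ each index $s = 1,\ldots,m$ contributes $2\binom{q}{s}$ TISGMs; and at $T = T_{c,m}$ the index $s = m$ contributes only $\binom{q}{m}$ (from the double root) while $s < m$ contribute $2\binom{q}{s}$ apiece.

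Statements (4) and (5) require two further identifications. For $T = T_{cr}$ one observes that $z = 1$ becomes an extra root of (\ref{zkk}) for every $m$, so the number of non-trivial roots at each $m$ drops by one: for $m < q/2$ exactly one non-trivial root $z = (q-m)/m$ survives (by explicit factorization of (\ref{zkk}) at $\tau = \tau_c$), while for even $q$ at $m = q/2$ both non-trivial roots coalesce with $z = 1$ and contribute nothing. Summing $\binom{q}{m}$ over the surviving $m$'s and using $\binom{q}{q/2} = 2\binom{q-1}{q/2}$ yields the formulas in statement (5). For statement (4), the key subtlety, and the main obstacle I expect, is the even-$q$ identification at $m = q/2$: the polynomial (\ref{zkk}) is palindromic there, forcing its two non-trivial roots to satisfy $z_*z^* = 1$; the corresponding Case~1 solutions then satisfy $(u^*,u^*,1) = (1/u_*,1/u_*,1)$, and Proposition \ref{tp} identifies their $2\binom{q}{q/2}$ would-be TISGMs into $\binom{q}{q/2}$, producing the uniform value $2^q - 1$ after a short binomial computation. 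Verifying this palindrome identification precisely is the one non-routine step; everything else is straightforward bookkeeping built on Lemma \ref{le2} and Corollary \ref{c1}.
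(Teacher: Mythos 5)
Your proposal is correct and follows essentially the same route as the paper, which itself only sketches the argument by combining Lemma \ref{le2}, Corollary \ref{c1} and Proposition \ref{tp}; your treatment of the even-$q$ collapse at $m=q/2$ (via the palindromic structure of (\ref{zkk}) and the $M\leftrightarrow M^c$ identification) and of the root count at $\tau=\tau_c$ actually supplies detail the paper leaves implicit. One minor caveat: the explicit value $z=(q-m)/m$ for the surviving root at $\tau=\tau_c$ holds only for $k=2$; for general $k$ one should argue instead that (\ref{zkk}) has at most two positive roots by Descartes' rule, exactly two for $\tau>\tau_{{\rm c},m}$, one of which is $z=1$ at $\tau=\tau_c$ — the count, which is all that matters, is unaffected.
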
	
\begin{rk} We note that the anti-ferromagnetic Ising model (see \cite[p.26]{Ro}) and the anti-ferromagnetic Potts model (see \cite[Theorem 8.2]{Rbp}) each have a unique TISGM. However, Theorem \ref{Thm2} demonstrates that this uniqueness does not hold (if $T<T_{c,1}$) for the anti-ferromagnetic coupled Ising-Potts model.
\end{rk}
For $J>0$ it is easy to see that $T_{cr}<T_{c,0}$. To compare $T_{c,0}$ with $T_{c,m}$, for $m=1, \dots, [q/2]$, one can
find $m$ from the third equation of (\ref{tej}) and see that $m$ is monotone decreasing function of $\theta$, therefore, $\theta_{{\rm c},m}>\theta_{{\rm c},0}$, i.e.,
$T_{c,0}>T_{c,m}$.

The following theorem summarizes results for the ferromagnetic case.
\begin{thm} \label{Thm3}
	For the {\bf ferromagnetic} ($J>0$) coupled Ising-Potts model (\ref{ph}) on the Cayley tree of
	order $k\geq 2$ there are critical temperatures\footnote{One critical temperature $T_{c,0}$ is more than the anti-ferromagnetic case.} $T_{c,m}\equiv T_{c,m}(k,q)$, $m=0,1,\dots,[q/2]$
	such that the following statements hold
	
	\begin{itemize}
		\item[1.] There are $[q/2]+2$ critical values such that $$T_{c,0}>T_{c,1}>T_{c,2}>\dots>T_{c,[{q\over 2}]-1}>T_{c,[{q\over 2}]}\geq T_{cr};$$
			\item[2.]
		If $T>T_{c,0}$ then there exists a lest one TISGM;
		
		\item[3.]
		If $T_{c,m+1}<T<T_{c,m}$ for some $m=0, 1,\dots,[{q\over 2}]-1$ then there are at least
		$3+2\sum_{s=1}^m{q\choose s}$ TISGMs.
		
		\item[4.] If $T_{cr}\ne T<T_{c,[{q\over 2}]}$ then there are at least $2^{q}+1$ TISGMs.
		
		\item[5.] If $T=T_{cr}$ then
		the number of TISGMs is at least
		$$\left\{\begin{array}{ll}
			2^{q-1}+2, \ \ \mbox{if} \ \ q -odd\\[2mm]
			2^{q-1}-{q-1\choose q/2}+2, \ \ \mbox{if} \ \ q -even.
		\end{array}\right.;$$
		
		\item[6.] If $T=T_{c,m}$, $m=1,\dots,[{q\over 2}]$, \, ($T_{c,[q/2]}\ne T_{cr}$) then  there are at least
		$3+{q\choose m}+2\sum_{s=1}^{m-1}{q\choose s}$ TISGMs.
	\end{itemize}
	\end{thm}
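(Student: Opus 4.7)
The plan is to mirror the proof of Theorem \ref{Thm2} and add the extra TISGMs which Result~1 makes available in the ferromagnetic regime $\theta>1$. I would consider two disjoint families of solutions of the fixed-point equation \eqref{fp}. The first is the $m=0$ family $(1,\dots,1;w,\dots,w)$ where $w$ solves \eqref{kap}: by Lemma \ref{m0} this has three values $\{1,z_*,z^*\}$ once $T<T_{c,0}$, contributing $3$ TISGMs (and just $1$ for $T>T_{c,0}$). The second consists, for each $m\in\{1,\dots,[q/2]\}$, of the $u=v$, $w=1$ slice of $\mathcal J_M$, whose nontrivial component reduces to equation \eqref{zkk}; Lemma \ref{le2} supplies two extra positive roots precisely when $\tau>\tau_{\mathrm{c},m}$, and the Proposition preceding it --- $\bigcap_{m=0}^{[q/2]}\mathcal S_m=\mathcal S_0$ --- ensures these roots do not coincide with solutions at other indices or with the Result~1 family.

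Each such fixed point is then unfolded into TISGMs via Corollary \ref{c1}. Because $u=v$ and $w=1$, the matrix \eqref{mat} has two identical rows so the row exchange in Corollary \ref{c1} is trivial, and each of the two nontrivial roots of \eqref{zkk} produces exactly $\binom{q}{m}$ TISGMs by column permutation, contributing $2\binom{q}{m}$ per $m$ whenever $T<T_{c,m}$. The ordering in item~1 then follows by chaining \eqref{tt} with the increasing bijection $\tau\mapsto\theta=\tau+\sqrt{\tau^2-1}$ valid for $\theta>1$, together with definition \eqref{Tcm}; the extra inequality $T_{c,0}>T_{c,m}$ for $m\geq 1$ is the observation made just before the theorem statement that $m$ is monotone decreasing in $\theta$ via the third equation of \eqref{tej}. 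Items~2 and~3 are then immediate summations, and item~4 reduces to the binomial identity $2\sum_{s=1}^{(q-1)/2}\binom{q}{s}+2=2^q$ for $q$ odd, with the $q$-even case overshooting the announced $2^q+1$ lower bound by exactly $\binom{q}{q/2}$.

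The main obstacle is the careful bookkeeping at the critical temperatures in items~5 and~6. At $T=T_{c,m}$ with $T_{c,[q/2]}\ne T_{cr}$, item~1 of Lemma \ref{le2} says that for index $m$ equation \eqref{zk} admits only one nontrivial root, so that $m$-contribution drops from $2\binom{q}{m}$ to $\binom{q}{m}$ while every $s<m$ still delivers $2\binom{q}{s}$; this yields item~6. At $T=T_{cr}$, item~2 of Lemma \ref{le2} is the key input: the collision $z=1$ in \eqref{zkk} occurs for every $m$ simultaneously (costing one of the two nontrivial roots per index), and whenever $q=2m$ the surviving root also collapses to $z=1$, eliminating the $m=q/2$ contribution entirely when $q$ is even. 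Using $\binom{q-1}{q/2}=\frac{1}{2}\binom{q}{q/2}$, the resulting sums collapse to the announced $2^{q-1}+2$ for $q$ odd and $2^{q-1}-\binom{q-1}{q/2}+2$ for $q$ even. No fresh dynamical analysis is required beyond Section~3: everything reduces to combinatorial accounting on top of Result~1, Lemma \ref{le2} and Corollary \ref{c1}.
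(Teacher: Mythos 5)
Your proposal is correct and follows essentially the same route as the paper, which derives Theorem \ref{Thm3} precisely by combining Result~1 (the three $m=0$ solutions $1,z_*,z^*$ for $\theta>\theta_{{\rm c},0}$), Lemma \ref{le2} (the root count of \eqref{zkk} across the thresholds $\tau_{{\rm c},m}$ and the collision at $\tau=\tau_c$, including the $q=2m$ degeneration), and the counting of Corollary \ref{c1}; your combinatorial bookkeeping for items 4--6, including the identity $\binom{q-1}{q/2}=\tfrac12\binom{q}{q/2}$, reproduces the stated bounds exactly.
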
	
%\begin{rk} We note that for the ferromagnetic Ising model (see \cite[Theorem 2.2]{Ro}) there are up to (exactly) three TISGMs and for  the ferromagnetic $q$-state Potts model (see \cite[Theorem 8.3]{Rbp}) there are up to (exactly) $2^q-1$ TISGMs. However, Theorem \ref{Thm3} says that for the ferromagnetic coupled Ising-Potts model there are at lest $2^q+1$ TISGMs, this is not exact upper bound, because for case $k=2$, and $q=5$, below we will show that the number of TISGMs is up to 335, i.e., very big than $2^5+1=33$.
%\end{rk}

\subsection{Detailed analysis for $k=2$.}		
In this subsection we are going to give detailed analysis of system (\ref{tej}) for $k=2$, and illustrate our results mainly for $q=5$. We explicitly obtain equations depending on $q$ and $m$, therefore, one can do similar analysis for more simple cases:  $q=2, 3, 4$.
		
	{\bf Result 2.} {\it For $k=2$ explicit solutions to (\ref{zk}) are
	$$z_1=1, \ \ \hat z_{2,3}(m)={(\theta-1)^2\mp \sqrt{D}\over 4m \theta},$$
	where}
	$$D=\theta^4-4\theta^3+(16m^2-16mq+6)\theta^2-4\theta+1.$$
	
Solutions $z_{2,3}$ exist if $D\geq 0$.
It is easy to rewrite $D$, with $\tau$ defined in (\ref{tau}), as
$$D=4(\tau-1-2\sqrt{m(q-m)})(\tau-1+2\sqrt{m(q-m)}).$$
	Since $\tau>1$, the condition $D\geq 0$ is reduced to
$$	\tau\geq 1+2\sqrt{m(q-m)}.$$ Thus critical values of $\theta$ can be found from $\tau= 1+2\sqrt{m(q-m)}$, i.e.,
$$\theta_{{\rm c}, m}=\left\{\begin{array}{ll}
	1+2\sqrt{m(q-m)}-2\sqrt{m(q-m)+\sqrt{m(q-m)}}, \ \ J<0\\[2mm]
	 1+2\sqrt{m(q-m)}+2\sqrt{m(q-m)+\sqrt{m(q-m)}}, \ \ J>0
	 \end{array}\right.$$

 In Fig.\ref{z1234} we give graphs of four functions $\hat z_{2,3}(m)$  for $q=5$ and $m=1$ and $m=2$.
\begin{figure}[h]
	\begin{center}
		\includegraphics[width=8cm]{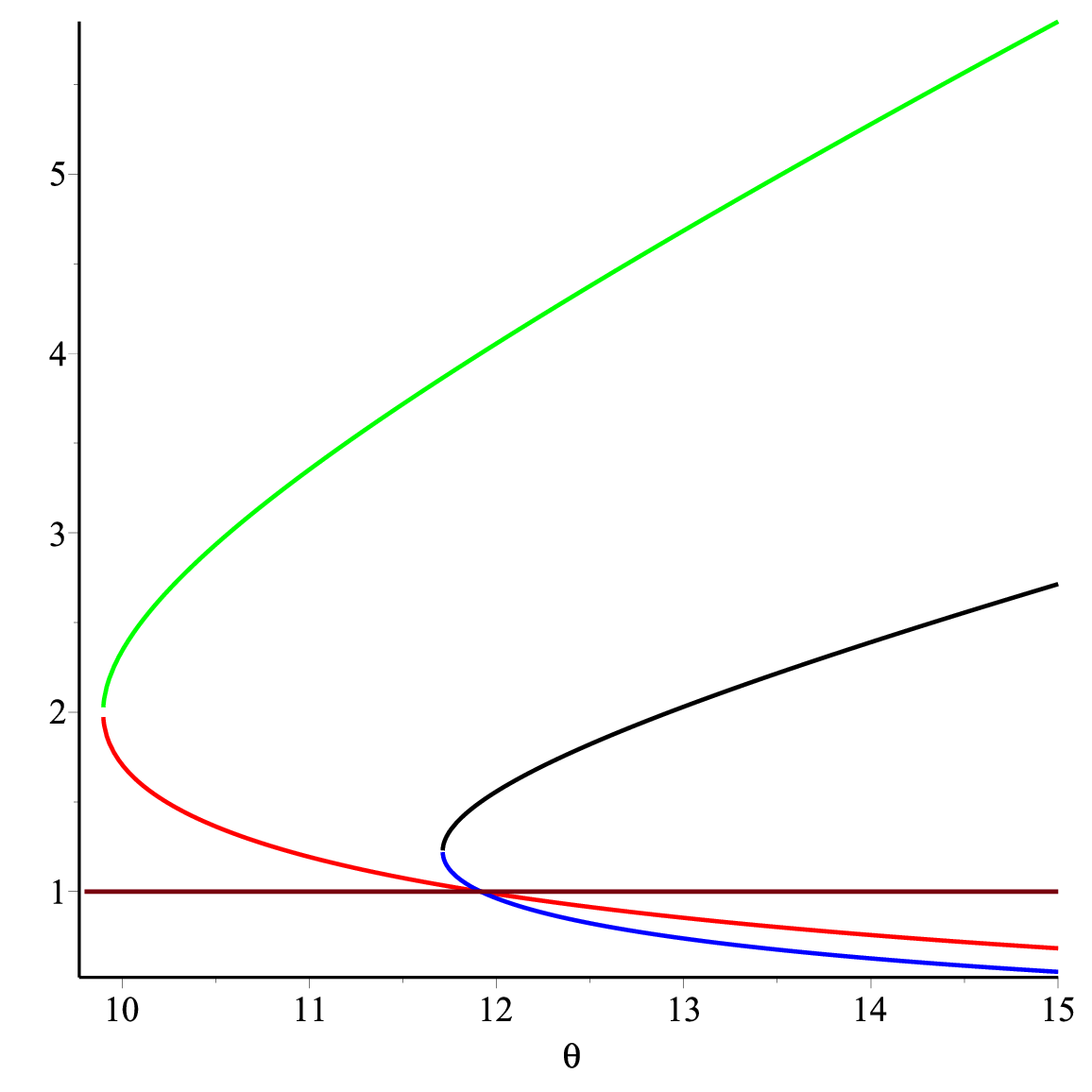}
	\end{center}
	\caption{Case $q=5$. From the left, started at $\theta=\theta_{{\rm c},1}\approx 9.8989$, the graph of $\hat z_{2}(1)$ (red curve), graph of $\hat z_3(1)$ (green curve); and started at $\theta=\theta_{{\rm c},2}\approx 11.7125$, the graph of $\hat z_{2}(2)$ (blue), graph of $\hat z_3(2)$ (black).}\label{z1234}
\end{figure}

{\bf Case 2:} Assume now $u=v$ but $w\ne 1$. Then  from (\ref{tej})
we get	
\begin{equation}\label{ej}
	\begin{array}{lll}
		u=\left(
		{(\theta+\theta^{-1}+2(m-1))u+(q-m)w+(q-m)\over 2mu+(\theta^{-1}+q-m-1)w+ \theta+q-m-1}\right)^k,\\[3mm]
		w=\left(
		{2mu+(\theta+q-m-1)w+(\theta^{-1}+q-m-1) \over 2mu+(\theta^{-1}+q-m-1)w+ \theta+q-m-1}\right)^k.
	\end{array}
\end{equation}
Assume $k=2$ and
denoting $z=\sqrt{u}$, $t=\sqrt{w}$ we rewrite
the last system as:
\begin{equation}\label{2j}
	\begin{array}{lll}
		z={(\theta+\theta^{-1}+2(m-1))z^2+(q-m)t^2+(q-m)\over 2mz^2+(\theta^{-1}+q-m-1)t^2+ \theta+q-m-1},\\[3mm]
		t=	{2mz^2+(\theta+q-m-1)t^2+(\theta^{-1}+q-m-1) \over 2mz^2+(\theta^{-1}+q-m-1)t^2+ \theta+q-m-1}.
	\end{array}
\end{equation}
 Solving the first equation of the (\ref{2j}) with respect to $t^2$ we get
 \begin{equation}\label{t2}
 	t^2={2mz^3-(\theta+\theta^{-1}+2(m-1))z^2 + (\theta+q-m-1)z-(q-m)\over q-m-(\theta^{-1}+q-m-1)z}.
 	\end{equation}
 Now from the second equation of (\ref{2j}) we find $t^2$ (assuming $t\ne 1$, since $w\ne 1$)  as
 \begin{equation}\label{t22}
 	t^2={1\over \theta^{-1}+q-m-1}((\theta-\theta^{-1})t-(2mz^2+\theta^{-1}+q-m-1)).\end{equation}
 From RHS of (\ref{t22}) it is clear that we may have positive solution if $\theta> 1$. Therefore, we consider the case $\theta>1$.

 Substitute $t^2$ from (\ref{t22}) in the LHS of (\ref{t2}) then solving obtained equation with respect to $t$ gives:
 \begin{equation}\label{ttt}
 	t=
 	\frac{[\{(m-q+1)\theta^2+ (q+m-2)\theta+1\}z +(q- m-1)\theta^2 + (q-m)\theta + 1]z}{(\theta + 1)[((m-q+1)\theta-1)z + (q-m)\theta]}.
 	 	\end{equation}
 Substituting this $t$ in the first equation of (\ref{2j})
 we get a quartic  (fourth-degree with respect to $z$) equation:
 \begin{equation}\label{z4}
 	P_4^{(4)}z^4+	P_5^{(3)}z^3+	P_5^{(2)}z^2 +	P_5^{(1)}z+P_4^{(0)}=0,
 	\end{equation}
where $P_n^{(j)}\equiv P_n^{(j)}(\theta,q,m)$ denotes a two-parametric (parameters $q$, $m$) polynomial of degree $n$ in $\theta$ as coefficient of $z^j$.
We have explicitly computed all $P_n^{(j)}$ using a computer; however, we do not present them here due to their cumbersome form. Additionally, solving equation (\ref{z4}) with such coefficients is challenging. Of course, one can use Ferrari's method\footnote{https://en.wikipedia.org/wiki/Quartic$_-$equation} and using computer give explicit solutions. Due to the complexity of the equation's coefficients, the solutions of (\ref{z4}) will be extremely lengthy and cumbersome. Therefore, here we give some solutions of the equation for concrete values of parameters $\theta$, $q$, $m$ with $q>m$.

{\it Case: $q=5$, $m=1$.} Consider the case $q=5$ and $m=1$ then equation (\ref{z4}) has
the form\\
\begin{equation}\label{4e}
	\begin{array}{lll}
(15\theta^4-10\theta^3+20\theta^2+10\theta+1)z^4-(3\theta^5+33\theta^4+24\theta^3-16\theta^2-11\theta-1)z^3\\[3mm]
+(7\theta^5+33\theta^4+58\theta^3+46\theta^2+15\theta+1)z^2-4\theta(\theta^4+8\theta^3+14\theta^2+8\theta+1)z\\[3mm]
+16\theta^2(\theta+1)^2=0.
\end{array}
\end{equation}

Since $\theta>1$ this equation has 4 sign changes
of coefficients, therefore, by Descartes' rule of signs, the equation has up to 4 positive solutions. Moreover, using implicit plot (see Fig. \ref{tort}) one can see the following \\

{\bf Result 3.} {\it There are two critical values  $\theta_c^{(1)}\approx 8.3589$ and $\theta_c^{(2)}\approx 10.3633$ such that
\begin{itemize}
	\item If $\theta<\theta_c^{(1)}$ then the equation (\ref{4e}) does not have any positive solution;
	\item If $\theta_c^{(1)}\leq \theta<\theta_c^{(2)}$ then the equation (\ref{4e}) has two positive solutions;
	\item If $\theta\geq\theta_c^{(2)}$ then the equation (\ref{4e}) has four positive solutions.
\end{itemize}}

Now, for $q=5$, $m=1$, we have to check which solution $z$ of (\ref{4e}), by (\ref{ttt}), defines positive $t$. Then for $\theta\geq\theta_c^{(2)}$, (by (\ref{ttt})) we should check
 \begin{equation}\label{t51}
	t=
	\frac{[(-3\theta^2+ 4\theta+1)z +3\theta^2 + 4\theta + 1]z}{(\theta + 1)[(-3\theta-1)z + 4\theta]}>0.
\end{equation}
Denote
$$\phi(\theta)={3\theta^2+4\theta+1\over 3\theta^2-4\theta-1}, \ \ \psi(\theta)={4\theta\over 3\theta+1}.$$
Solving the inequality  (\ref{t51}), with respect to $z$, we get
\begin{equation}\label{min}
	z\in (0, \min\{\phi(\theta), \psi(\theta)\})\cup (\max\{\phi(\theta), \psi(\theta)\}, +\infty)
\end{equation} 	
By Fig.\ref{tort} one can see that each function $z(\theta)$ (black curves)  satisfy condition (\ref{min}).
%\begin{figure}[h]
%	\begin{center}
%		\includegraphics[width=7.3cm]{z4.eps}
%			\end{center}
%	\caption{The implicit plot of~\eqref{4e}. There graph stated at $\theta\approx 8.3589$ and the upper branch started at $\theta\approx 10.3633$.}\label{tort}
%\end{figure}
\begin{figure}[h]
	\begin{center}
		\includegraphics[width=9cm]{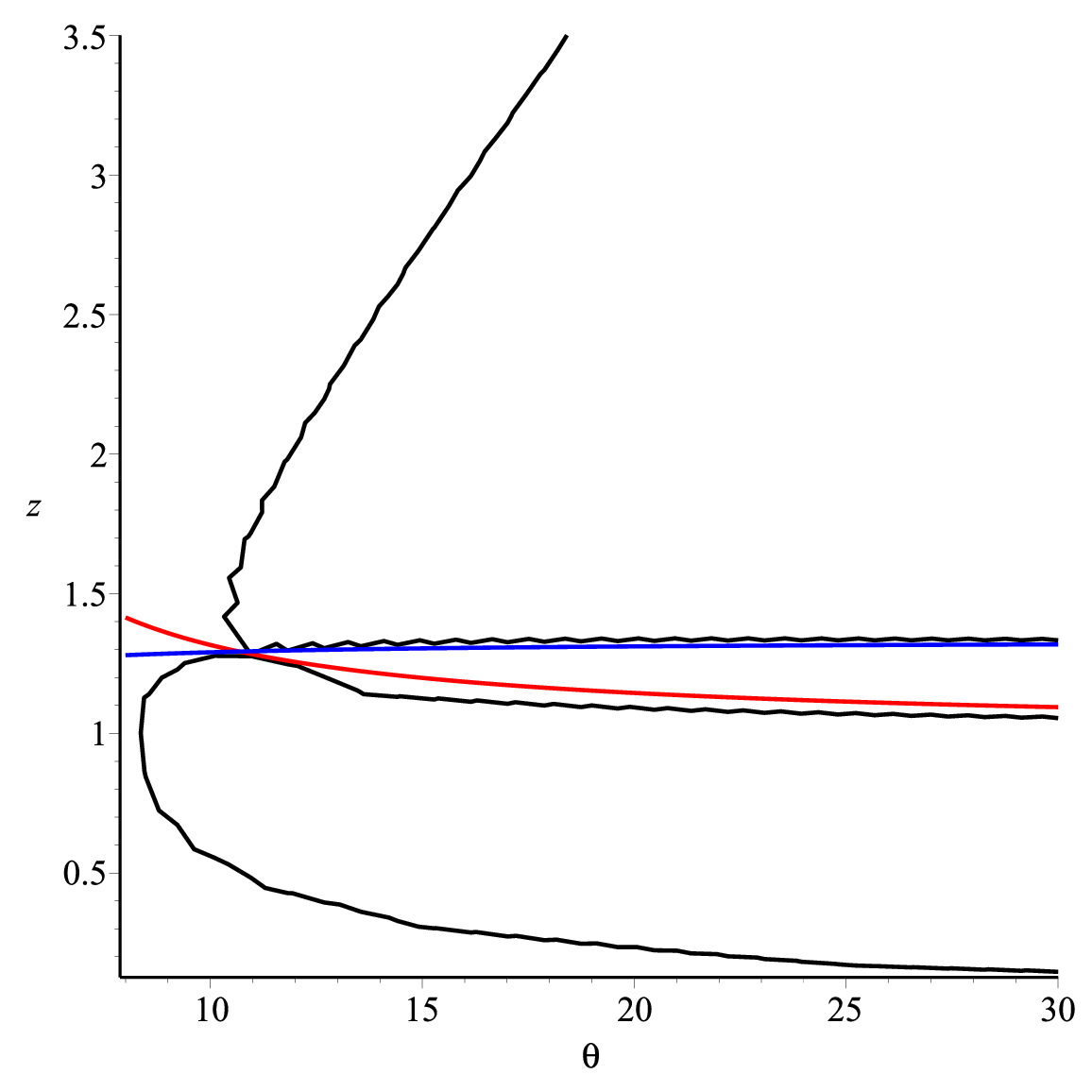}
	\end{center}
	\caption{The implicit plot of~\eqref{4e} the black curves. There graph started at $\theta\approx 8.3589$ and the upper branch started at $\theta\approx 10.3633$. Graph of $\phi$ is red, the graph of $\psi$ is blue.}\label{tort}
\end{figure}

{\it Case: $q=5$, $m=2$.}
Consider the case $q=5$ and $m=2$ then equation (\ref{z4}) has
the form\\
	\begin{equation}\label{4e2}
	\begin{array}{lll}
 \left(12 \theta^4 + 37 \theta^2 + 14 \theta + 1 \right) z^4
	- \left(\theta^5 +29 \theta^4 + 32 \theta^3 - 4 \theta^2 - 10 \theta - 1 \right) z^3\\[2mm]
+ \left(5 \theta^5 + 25 \theta^4 + 44 \theta^3 + 34 \theta^2 + 11 \theta + 1 \right) z^2\\[2mm]
-3\theta\left(\theta^4+6\theta^3 +10\theta^2+6\theta+1\right) z
	+ 9 \theta^2(\theta + 1)^2=0.
\end{array}
\end{equation}

Since $\theta>1$ this equation has 4 sign changes
of coefficients, therefore it has up to 4 positive solutions. Moreover, using implicit plot (see Fig. \ref{rt}) one can see the following \\

{\bf Result 4.} {\it There are two critical values  $\theta_c^{(3)}\approx 8.3376$ and $\theta_c^{(4)}\approx 11.9871$ such that
	\begin{itemize}
		\item If $\theta<\theta_c^{(3)}$ then the equation (\ref{4e}) does not have any positive solution;
		\item If $\theta_c^{(3)}\leq \theta<\theta_c^{(4)}$ then the equation (\ref{4e2}) has two positive solutions;
		\item If $\theta\geq\theta_c^{(4)}$ then the equation (\ref{4e2}) has four positive solutions.
\end{itemize}}

Now, for $q=5$, $m=2$, we have to check which solution $z$ of (\ref{4e}), by (\ref{ttt}), defines positive $t$. Then for $\theta\geq\theta_c^{(3)}$, we should check
\begin{equation}\label{t52}
	t=
	\frac{[(-2\theta^2+ 5\theta+1)z +2\theta^2 + 3\theta + 1]z}{(\theta + 1)[(-2\theta-1)z + 3\theta]}>0.
\end{equation}
Denote
$$a(\theta)={2\theta^2+3\theta+1\over 2\theta^2-5\theta-1}, \ \ b(\theta)={3\theta\over 2\theta+1}.$$
Solving the inequality (\ref{t52}),  with respect to $z$, we get
\begin{equation}\label{max}
	z\in (0, \min\{a(\theta), b(\theta)\})\cup (\max\{a(\theta), b(\theta)\}, +\infty)
\end{equation} 	
By Fig.\ref{rt} one can see that each function $z(\theta)$ (black curves)  satisfy condition (\ref{max}).

%\begin{figure}[h]
%	\begin{center}
%		\includegraphics[width=7.3cm]{z42.eps}
%	\end{center}
%	\caption{The implicit plot of~\eqref{4e2}. The graph is started at $\theta\approx 8.3376$ and the upper branch started at $\theta\approx 11.9871$. }\label{rt}
%\end{figure}
	\begin{figure}[h]
		\begin{center}
			\includegraphics[width=9cm]{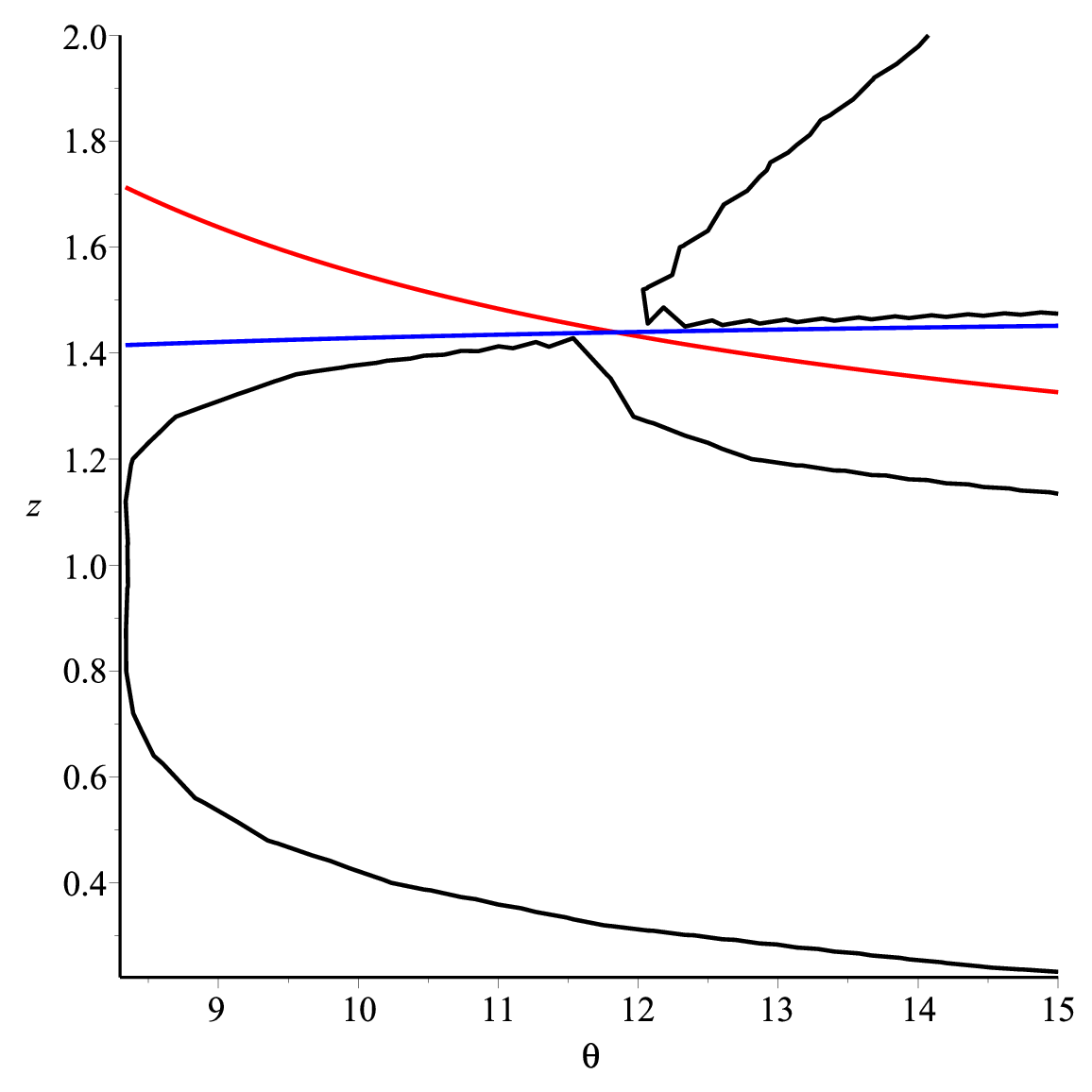}
		\end{center}
		\caption{The implicit plot of~\eqref{4e2} the black curves. The graph is started at $\theta\approx 8.3376$ and the upper branch started at $\theta\approx 11.9871$. Graph of function $a(\theta)$ (red) and $b(\theta)$ (blue).}\label{rt}
	\end{figure}
%\begin{figure}[h]
%	\begin{center}
%		\includegraphics[width=9cm]{imp12.eps}
%	\end{center}
%	\caption{The implicit plot of~\eqref{4e2} the black curves. The graph is started at $\theta\approx 8.3376$ and the upper branch started at $\theta\approx 11.9871$. Graph of function $a(\theta)$ (red) and $b(\theta)$ (blue).}\label{rt}
%\end{figure}
{\bf Case 3:} Assume now $u\ne v$ but $w=1$. Again we consider the case $k=2$ and denoting $z=\sqrt{u}$, $s=\sqrt{v}$ from (\ref{tej}) we get	
	\begin{equation}\label{tuv}
	\begin{array}{lll}
		z=	{(\theta+m-1)z^2+(\theta^{-1}+m-1)s^2+2(q-m)\over mz^2+ms^2+\theta^{-1}+\theta+2(q-m-1)},\\[3mm]
		s=	{(\theta^{-1}+m-1)z^2+(\theta+m-1)s^2+2(q-m)\over mz^2+ms^2+\theta^{-1}+\theta+2(q-m-1)}.
	\end{array}
\end{equation}
Subtracting from the first equation the second one (since $z\ne s$) we get
\begin{equation}\label{zs}
	(\theta-\theta^{-1})(z+s)=m(z^2+s^2)+\theta^{-1}+\theta+2(q-m-1).
\end{equation}
Now add equations of (\ref{tuv}):
\begin{equation}\label{zs2}
	z+s={(\theta+\theta^{-1}+2(m-1))(z^2+s^2)+4(q-m)\over m(z^2+s^2)+\theta^{-1}+\theta+2(q-m-1)}.
\end{equation}
Denoting
\begin{equation}\label{gz}
	g=z+s, \ \ h=z^2+s^2.
	\end{equation}
from (\ref{zs}) and (\ref{zs2}) we obtain (recall that $\theta>1$):
\begin{equation}\label{gh}
\left\{	\begin{array}{ll}
	g=(\theta-\theta^{-1})^{-1}\left(mh+\theta^{-1}+\theta+2(q-m-1)\right)\\[2mm]
	g={(\theta+\theta^{-1}+2(m-1))h+4(q-m)\over mh+\theta^{-1}+\theta+2(q-m-1)}.
\end{array}\right.
\end{equation}
The system yields a quadratic equation in $h$ (also in $ g $), where the coefficients depend on three parameters: $\theta$, $q$, and $m$. One can provide both solutions to the equation, along with the conditions on these parameters that ensure the solutions are positive. However, in the general case, the resulting expressions are quite lengthy.

Therefore, we will perform these computations specifically for $q = 5$.

{\it Case: $q=5$, $m=1$.} Then for $h$ we have to solutions:
\begin{equation}\label{h5}
	h_{1,2}(\theta)={\theta^4 - 2\theta^3 - 12\theta^2 - 2\theta - 1 \pm (\theta-1)\sqrt{(\theta+1)\mathcal D_1}\over 2\theta^2}
		\end{equation}
	where
	$$\mathcal D_1=\theta^5 - 3\theta^4 - 26\theta^3 + 30\theta^2 +5\theta + 1.$$
	Since $\theta>1$ by plot $\mathcal D_1$ and $h_{1,2}$ one can see that $h_{1,2}$ are positive  iff  $$\theta>\theta_c^{(5)}\approx 6.336 .$$
	Corresponding positive $g_{1,2}$ has the form
	\begin{equation}\label{h53}
		g_{1,2}(\theta)={\theta^3+\theta^2+\theta+1 \pm \sqrt{(\theta+1)\mathcal D_1}\over 2(\theta+1)\theta}.
	\end{equation}
We note that if positive $g$ and $h$ exist then,
for $\theta$ satisfying $2h-g^2\geq 0$, the
corresponding to them  positive solutions $z$ and $s$ to  (\ref{gz}) are
$$	z_{1,2}={1\over 2}(g\pm \sqrt{2h-g^2}), \ \ s_{1,2}={1\over 2}(g\mp \sqrt{2h-g^2}).$$

By computer computations one can see that $2h_1(\theta)-g_1^2(\theta)>  0$  iff
$$\theta_c^{(5)}< \theta <\hat\theta_c^{(5)}\approx 6.4174,$$
and  $2h_2(\theta)-g_2^2(\theta)>0 $ iff
$\theta>\theta_c^{(5)}$.
Thus we have the following result

{\bf Result 5.} {\it From (\ref{gz}) we get the following  explicit solutions:}
\begin{itemize}
	\item If $\theta_c^{(5)}< \theta <\hat\theta_c^{(5)}$ then there are 4 solutions:
\begin{equation}\label{zs12}
	\begin{array}{ll}
	z_{1,2}={1\over 2}(g_1\pm \sqrt{2h_1-g_1^2}), \ \ s_{1,2}={1\over 2}(g_1\mp \sqrt{2h_1-g_1^2}),\\[2mm]
		z_{3,4}={1\over 2}(g_2\pm \sqrt{2h_2-g_2^2}), \ \ s_{3,4}={1\over 2}(g_2\mp \sqrt{2h_2-g_2^2}).
	\end{array}
	\end{equation}
	\item If $\hat\theta_c^{(5)}< \theta$ the there are 2 solutions:
$$	z_{3,4}={1\over 2}(g_2\pm \sqrt{2h_2-g_2^2}), \ \ s_{3,4}={1\over 2}(g_2\mp \sqrt{2h_2-g_2^2}).$$
	\end{itemize}

{\it Case: $q=5$, $m=2$}. Then for $h$ we have two solutions:
\begin{equation}\label{h6}
	\hat h_{1,2}={\theta^4 - 2\theta^3 - 16\theta^2 - 6\theta - 1 \pm (\theta-1)\sqrt{(\theta+1)(\theta^5 - 3\theta^4 - 46\theta^3 + 66\theta^2 +13\theta + 1)}\over 8\theta^2}.
\end{equation}
Since $\theta>1$ positive solutions exist iff $$\theta>\theta_c^{(6)}\approx 7.7897 .$$
Corresponding $g_{1,2}$ has the form
\begin{equation}\label{h53}
	\hat g_{1,2}={(\theta+1)^3 \pm \sqrt{(\theta+1)(\theta^5 - 3\theta^4 - 46\theta^3 + 66\theta^2 +13\theta + 1)}\over 4(\theta+1)\theta}.
\end{equation}
%For positive $\hat g$ and $\hat h$, if $\theta$ such that $2\hat h-\hat g^2\geq 0$, the
%corresponding to them  positive solutions $z$ and $s$ to  (\ref{gz}) are
%$$	z_{1,2}={1\over 2}(g\pm \sqrt{2h-g^2}), \ \ s_{1,2}={1\over 2}(g\mp \sqrt{2h-g^2}).$$

By computer computations one can see that $2\hat h_1(\theta)-\hat g_1^2(\theta)>  0$  iff
$$\theta_c^{(6)}< \theta <\hat\theta_c^{(6)}\approx 8.3589,$$
and  $2\hat h_2(\theta)-\hat g_2^2(\theta)>0 $ iff
$\theta>\theta_c^{(6)}$.
Thus we have the following result

{\bf Result 6.} {\it From (\ref{gz}) we get the following results:}
\begin{itemize}
	\item If $\theta_c^{(6)}< \theta <\hat\theta_c^{(6)}$ then there are 4 solutions:
\begin{equation}\label{zs12}
	\begin{array}{ll}
		\hat z_{1,2}={1\over 2}(\hat g_1\pm \sqrt{2\hat h_1-\hat g_1^2}), \ \ \hat s_{1,2}={1\over 2}(\hat g_1\mp \sqrt{2\hat h_1-\hat g_1^2}),\\[2mm]
		\hat z_{3,4}={1\over 2}(\hat g_2\pm \sqrt{2\hat h_2-\hat g_2^2}), \ \ \hat s_{3,4}={1\over 2}(\hat g_2\mp \sqrt{2\hat h_2-\hat g_2^2}).
	\end{array}
\end{equation}
	\item If $\hat\theta_c^{(6)}< \theta$ then there are 2 solutions:
\begin{equation}\label{zs122}
			\hat z_{3,4}={1\over 2}(\hat g_2\pm \sqrt{2\hat h_2-\hat g_2^2}), \ \ \hat s_{3,4}={1\over 2}(\hat g_2\mp \sqrt{2\hat h_2-\hat g_2^2}).
\end{equation}
\end{itemize}

\begin{figure}[h]
	\begin{center}
		\includegraphics[width=11cm]{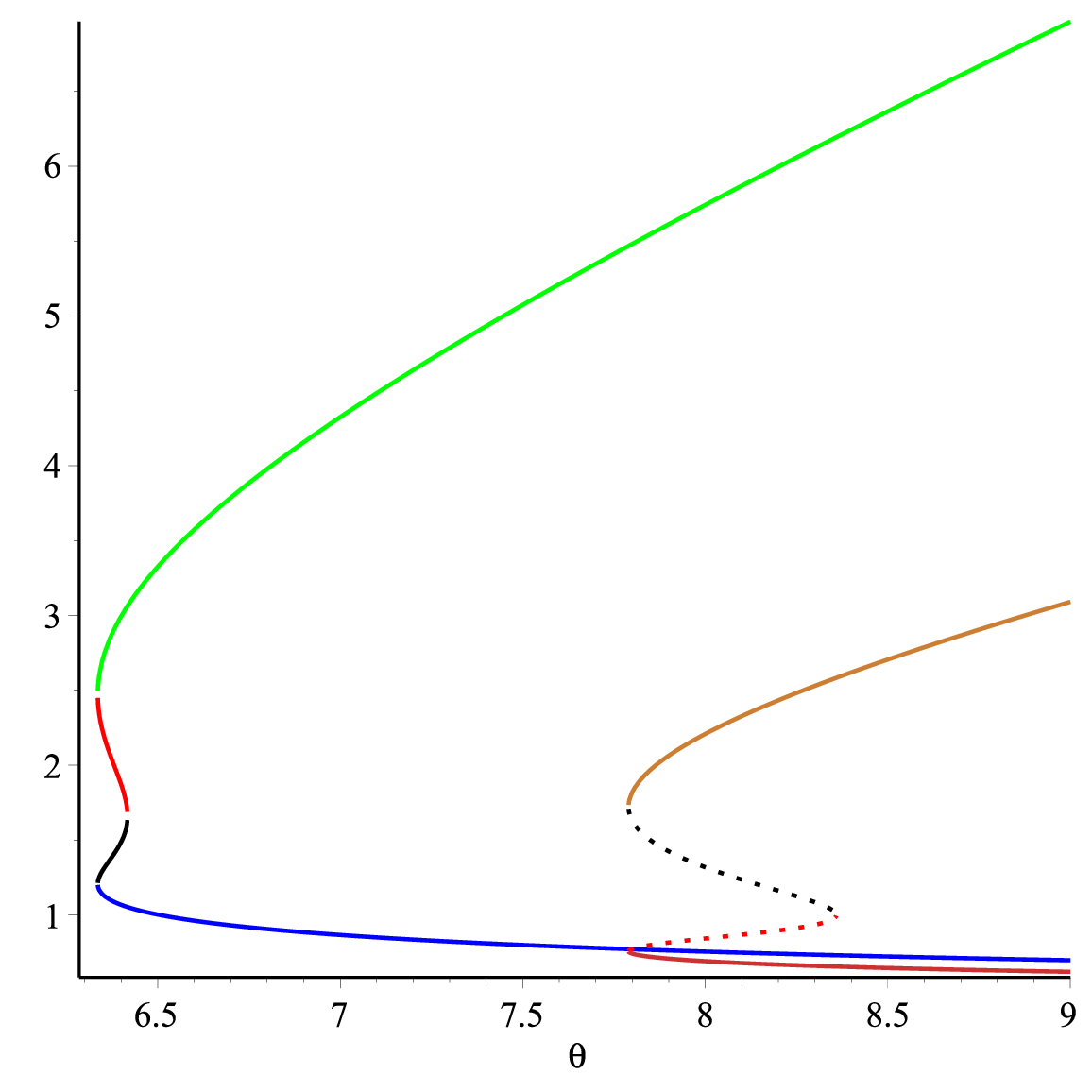}
	\end{center}
	\caption{The graphs of functions (see Result 5 and 6): for $\theta>6.336$ the function $z_1$ (red), $z_2$ (black), $z_3$ (green), $z_4$ (blue); for $\theta>7.7889$ the function $\hat z_1$ (black dots), $\hat z_2$ (red dots), $\hat z_3$ (gold) and $\hat z_4$ (orange). }\label{z1-8}
\end{figure}
	\begin{rk} In case of Ising and Potts model if a Gibbs measure exists for some $T_0$ then it exists for all $T<T_0$.  The Results 5 and 6 show one of interesting  property for coupled Ising-Potts model: some Gibbs measures may only exist in a bounded interval of temperatures (they are Gibbs measures corresponding to functions $z_1$, $z_2$, $\hat z_1$, $\hat z_2$ see Fig. \ref{z1-8}).
		\end{rk}
{\bf Case 4:} Assume now $u\ne v$ and $w\ne1$.
Then from (\ref{exi}) for $k=2$ we get $C+1=A+B=(\theta-\theta^{-1})^{-1}D^2$. The equation $C+1=A+B$ is
\begin{equation}\label{uvw}
	m(u+v)+(\theta+q-m-1)w+\theta^{-1}+q-m
=(\theta+\theta^{-1}+2(m-1))(u+v)+2(q-m)(w+1).
\end{equation}
Solve equation (\ref{uvw}) with respect to $u+v$:
$$u+v={(\theta-(q-m+1))\theta w-(q-m)\theta+1\over (\theta-1)^2+m\theta}.$$
Substitute this $u+v$ in the third equation of (\ref{tej}) for $k=2$, and since $w\ne 1$ we obtain a quadratic equation which has two solutions:
$$w_{1,2}=
\frac{1}{2} \cdot \frac{P \mp (\theta+1)((\theta-1)^2+\theta m)\sqrt{\mathcal D}}{(\theta - 1)^2((q-1)\theta + 1)^2},
$$
where
$$P=\theta^6+2(m-1)\theta^5+(m^2+2mq-2q^2-2m+4q-3)\theta^4$$
$$+2(m^2-mq+2q^2-6q+6)\theta^3+(m^2-2mq-2q^2+12q-13)\theta^2-2(2q-3)\theta - 1$$
and
$$\mathcal D=\theta^6+2(m-1)\theta^5+(m^2+4mq-4q^2-4m+8q-5)\theta^4$$ $$+2(m^2-2mq+4q^2+2m-12q+10)\theta^3+(m^2-4mq-4q^2+24q-25)\theta^2-2(4q+m-7)\theta-3.$$
Thus positive solutions exists for parameters satisfying \begin{equation}\label{dp}
\mathcal D\geq 0, \ \ P> (\theta+1)((\theta-1)^2+\theta m)\sqrt{\mathcal D}.
\end{equation}
Using these solutions
from the first equation of (\ref{tej}) we get the following quadratic equation for $u$:
\begin{equation}\label{U}
	u=\left({u+\mathcal W\over
		1+\theta^{-1}+\mathcal W}\right)^2,
\end{equation}
where $\mathcal W$ is one of
$$\mathcal W_{1,2}={(\theta^{-1}+q-1)w_{1,2}+q-m\over \theta-\theta^{-1}}.$$
The equation (\ref{U}) is
$$u^2-[(\mathcal W+{1\over \theta})^2+{2\over \theta}+1]u+\mathcal W^2=0.$$
If $w_{1,2}$ exists than the last equation has two positive solutions, because its discriminant is
$$
\left((\mathcal W-1)^2+{1\over \theta^2}(2\theta(\mathcal W+1)+1)\right)\left((\mathcal W+{1\over \theta})^2+{2\over \theta}+1+2\mathcal W\right)>0$$
and by Vieta's formulas applied to quadratic polynomial we see that both solutions are positive. Denote by
$u_{1}$, $u_2$ (resp. $u_3$, $u_4$) the positive solutions corresponding to $w_1$ (resp. $w_2$). Since $u$ and $v$ are symmetric in (\ref{tej}), similar argument shows that there are
positive solutions $v_{i}$, $i=1,2,3,4$ corresponding to $w_1$ and $w_2$ too.
Summarize this to

{\bf Result 7.} If parameters satisfy conditions (\ref{dp}) then
the system of equations (\ref{tej}) has four positive solutions:
$$(u_i,v_i,w_1), i=1,2; \ \ (u_i,v_i,w_2), i=3,4.\\$$

 To see simpler form of condition (\ref{dp}) we consider $q=5$.

 {\it Case: $q=5$, $m=1$}. Then
 $$P=\theta^6-24\theta^4+44\theta^3-12\theta^2-14\theta-1.$$
 $$\mathcal D=\theta^6-48\theta^4+86\theta^3
 -24\theta^2-28\theta-3.$$
 By plotting of graph of $\mathcal D$ for $\theta>1$ one can see that there is $\theta_c^{(7)}\approx 5.8416$ such that $\mathcal D>0$ iff $\theta>\theta_c^{(7)}$. Moreover, for such values of $\theta$, $w_1$ and $w_2$ are positive (see Fig. \ref{w12}). For graphs of $u_i$, $i=1,2,3,4$ see Fig. \ref{u1234}.
 \begin{figure}[h]
 	\begin{center}
 		\includegraphics[width=8cm]{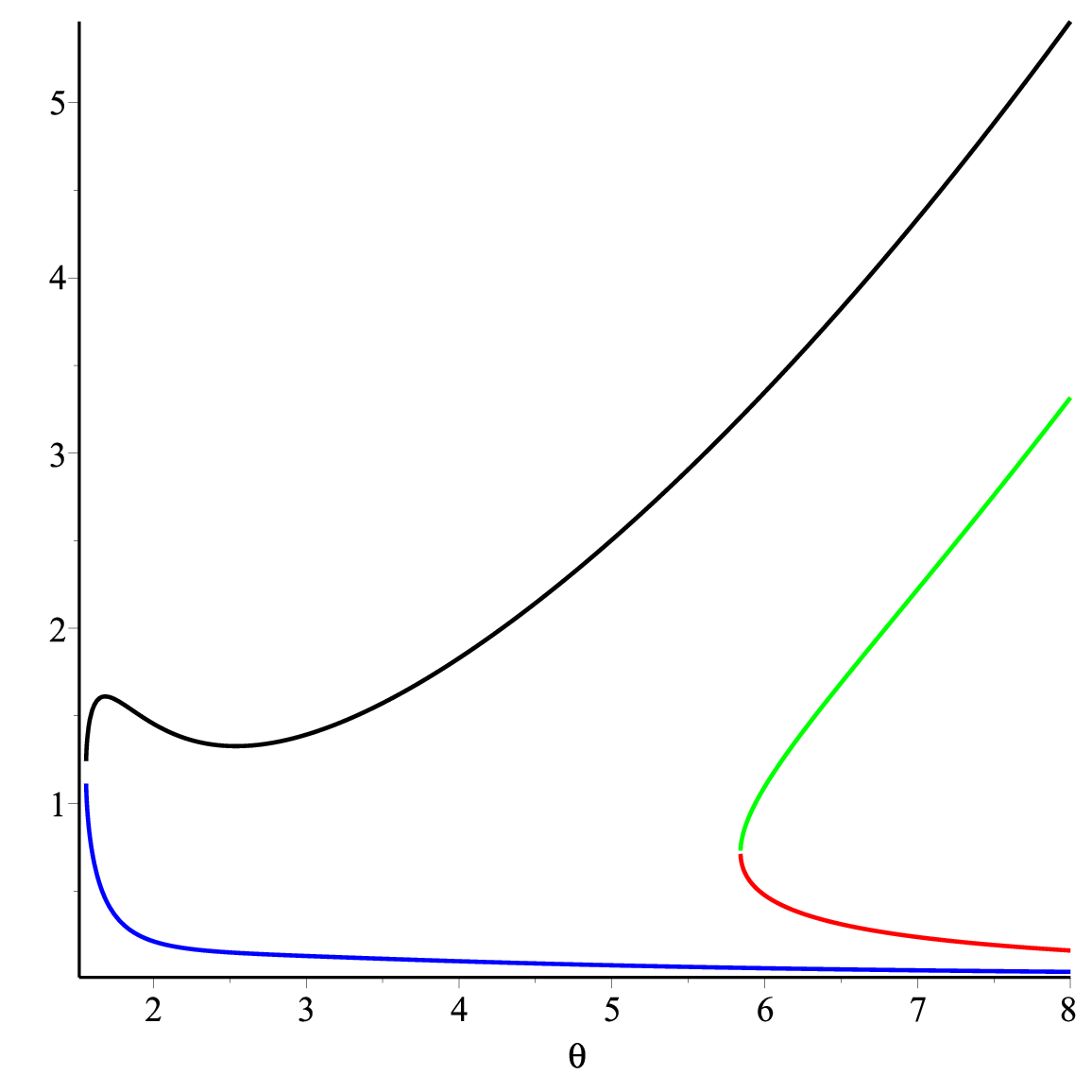}
 	\end{center}
 	\caption{The graphs of $w_i$, $i=1,2$ for $q=5$. {\it Case} $m=1$: $w_1$ (red), $w_2$ (green). {\it Case $m=2$}: $w_1$ (blue) and $w_2$ (black).}\label{w12}
 \end{figure}
\begin{figure}[h]
	\begin{center}
		\includegraphics[width=8cm]{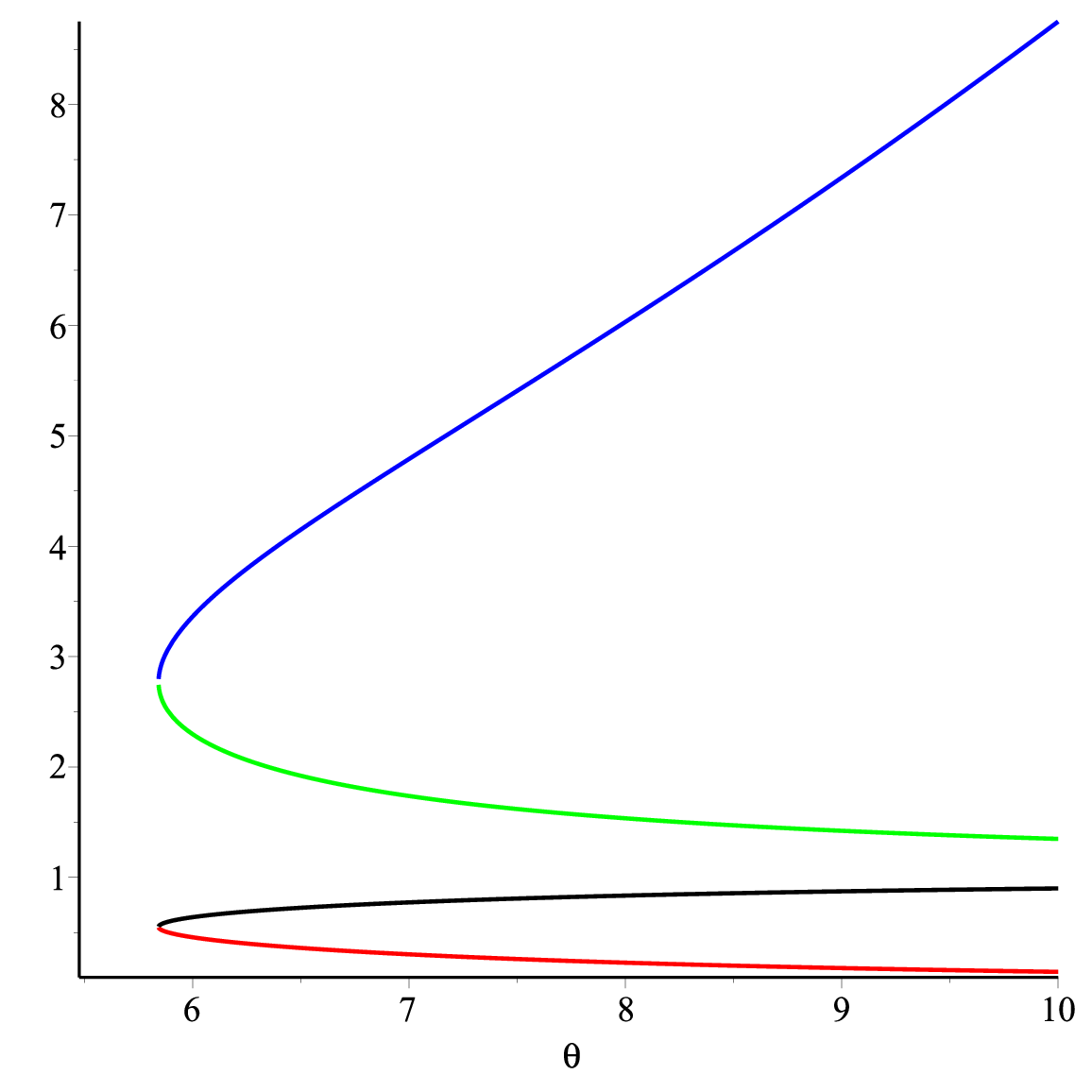}
	\end{center}
	\caption{The graphs of $u_1$ (red), $u_2$ (green), $u_3$ (black) and $u_4$ (blue) for $q=5$, $m=1$, $\theta>\theta_c^{(7)}$.}\label{u1234}
\end{figure}

{\it Case: $q=5$, $m=2$}. Then
$$P=\theta^6+2\theta^5-13\theta^4+40\theta^3-19\theta^2-14\theta-1.$$
$$\mathcal D=\theta^6+2\theta^5-29\theta^4+76\theta^3
-41\theta^2-30\theta-3.$$
By plotting of graph of $\mathcal D$ for $\theta>1$ one can see that there is $\theta_c^{(8)}\approx 1.5587$ such that $\mathcal D>0$ iff $\theta>\theta_c^{(8)}$. Moreover, for such values of $\theta$, $w_1$ and $w_2$ are positive (see Fig. \ref{w12}). For graphs of $u_i$, $i=1,2,3,4$ see Fig. \ref{u1234-2}.
%\begin{figure}[h]
%	\begin{center}
%		\includegraphics[width=7.3cm]{w122.eps}
%	\end{center}
%	\caption{The graphs of $w_1$ (doted curve) and $w_2$ (solid curve) for $q=5$, $m=2$, $\theta>\theta_c^{(8)}$.}\label{w122}
%\end{figure}
\begin{figure}[h!]
	\begin{center}
		\includegraphics[width=7cm]{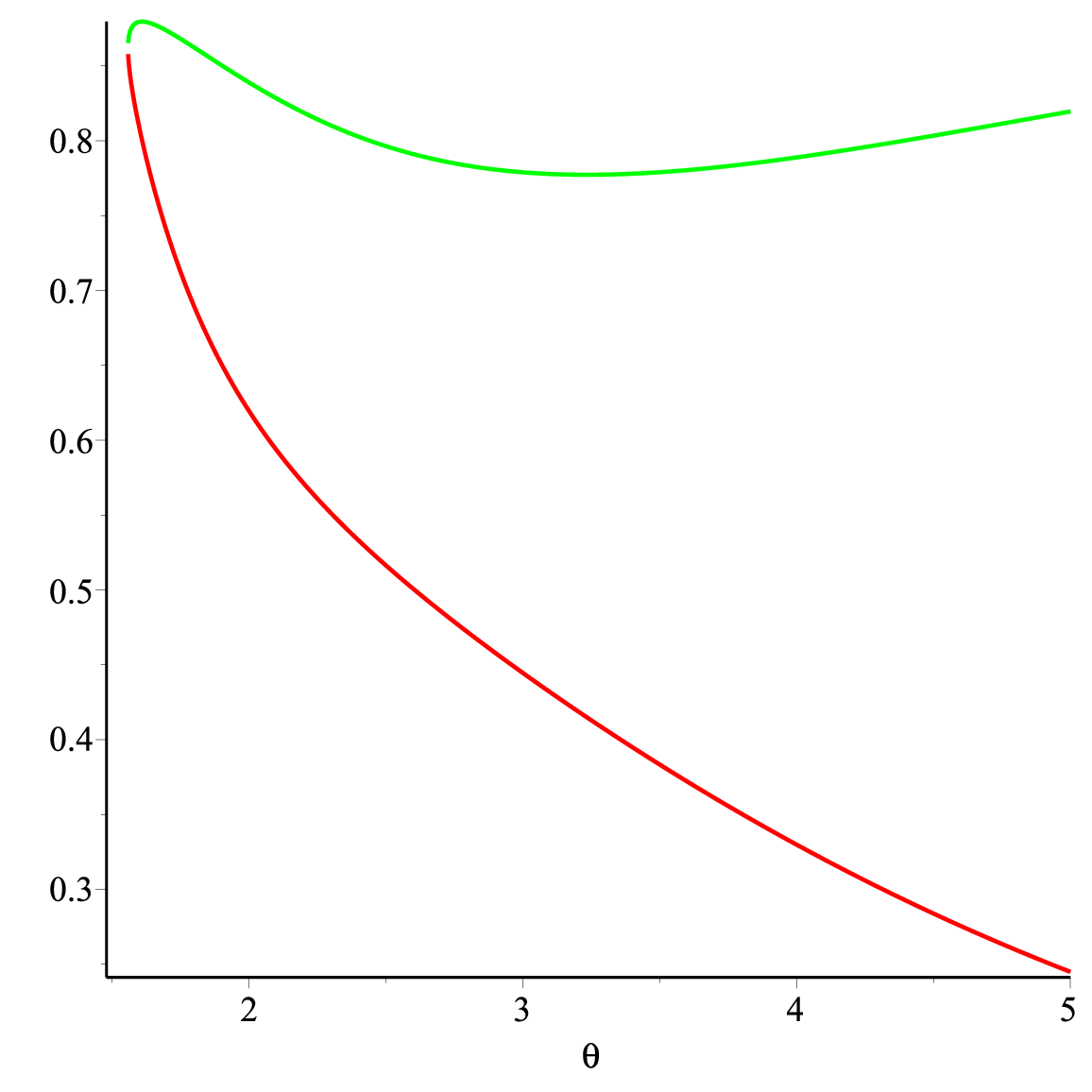}
			\includegraphics[width=7cm]{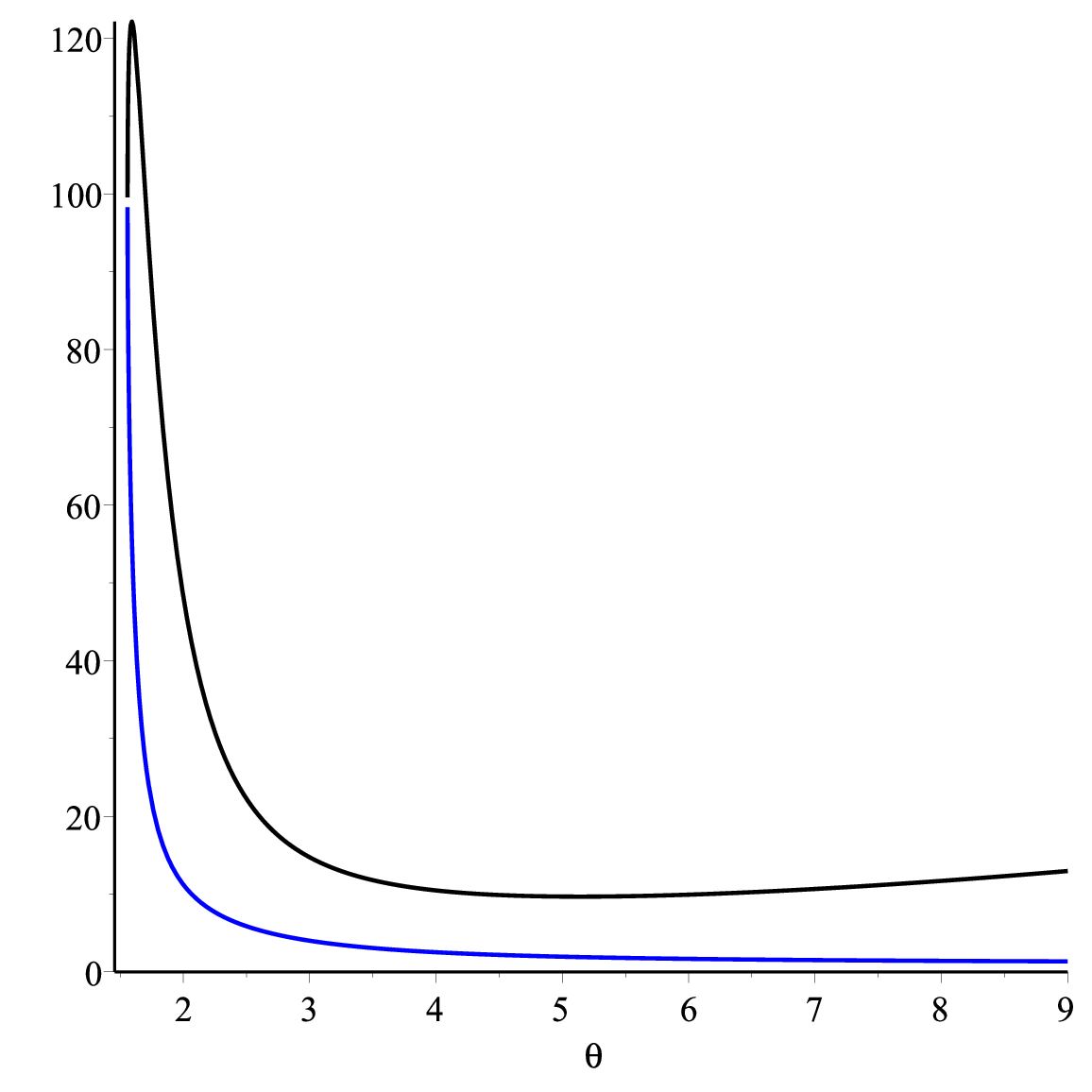}
	\end{center}
	\caption{The graphs of $u_1$ (red), $u_2$ (blue), $u_3$ (green), $u_4$ (black) for $q=5$, $m=2$, $\theta>\theta_c^{(8)}$.}\label{u1234-2}
\end{figure}
\begin{figure}[h]
	\begin{center}
		\includegraphics[width=16cm]{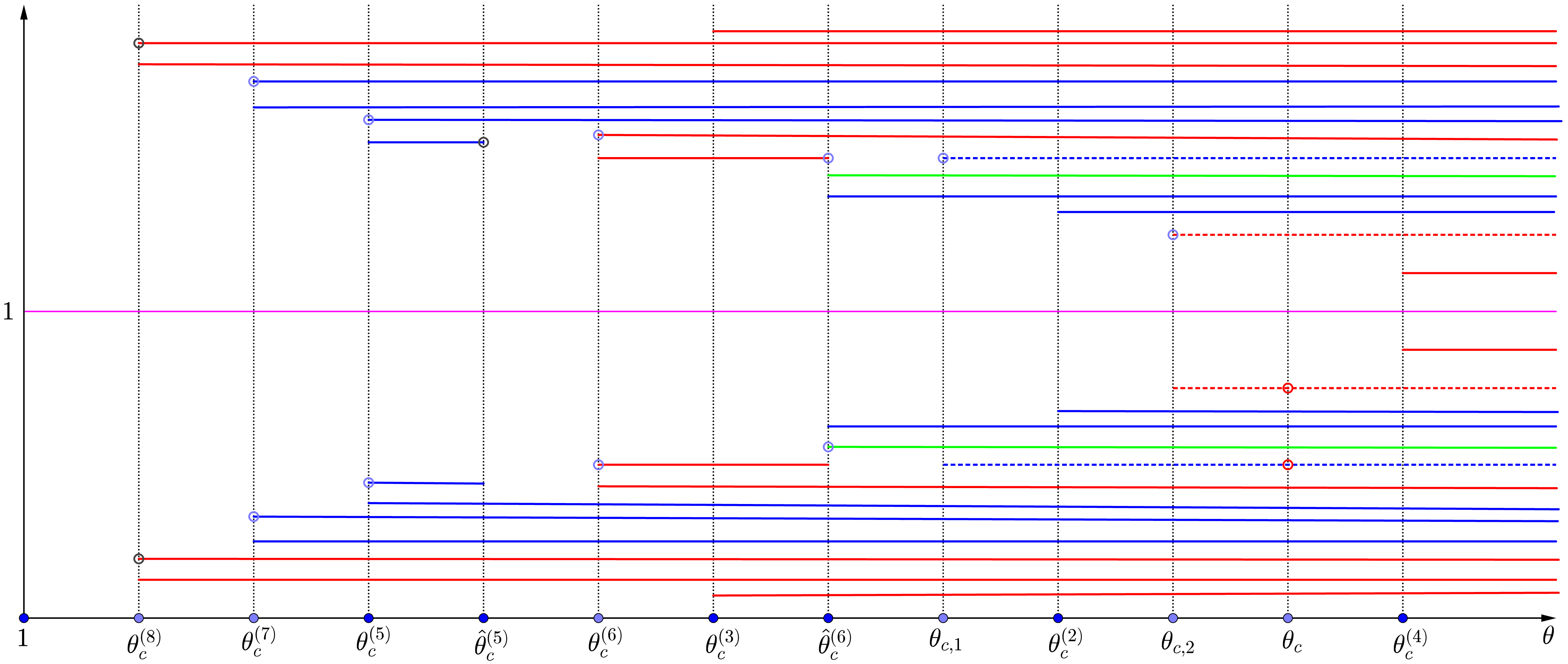}
	\end{center}
	\caption{The distribution of solutions to (\ref{tej}) for  $k=2$  and  $q=5$  is illustrated in the figure. Each line represents a distinct solution plotted over the region of $\theta$  where it exists. The colors denote different types of solutions: pink corresponds to the solution 1 (which always exists), green represents the solutions  $z_*$  and  $z^*$, blue indicates the solutions for  $m=1$, (dashed blue is the case $u=v$, $w=1$)  and red signifies the solutions for  $m=2$ (dashed red is the case $m=2$ with $u=v$, $w=1$). The presence of circular dots indicates that a solution is absent at that point. This figure is useful for counting the number of solutions between critical values of  $\theta$; simply count how many lines appear in the region. }\label{rang}
\end{figure}

Summarized above obtained Results 1-7, in case $k=2$ and $q=5$, we plot the Fig.\ref{rang}. This figure is useful to count number
 of solutions having the form (\ref{mat}) (i.e., $u$ and $v$ occupied the first $m$ coordinates). As one can see in the figure that such solutions as up to 27.

  Denote by
$n_r(\theta)$ the number of color $r$ in Fig. \ref{rang} at $\theta$.

Let $\mathcal N_{k,q}(\theta)$ be the number of solutions to (\ref{fp}), for given $k\geq 2$, $q\geq 2$ and $\theta>0$.
To count number $\mathcal N_{2,5}(\theta)$ as function of $\theta$ we use Fig. \ref{rang} and the  Corollary \ref{c1}.
The following formula is obvious:
$$
\mathcal N_{2,5}(\theta)=1+2n_{\rm green}(\theta)+\left(n_{\rm dash \, blue}(\theta)+2n_{\rm blue}(\theta)\right){5\choose 1}+\left(n_{\rm dash \, red}(\theta)+2n_{\rm red}(\theta)\right){5\choose 2}$$
\begin{equation}\label{nq}
	=1+2n_{\rm green}(\theta)+5n_{\rm dash \, blue}(\theta)+10n_{\rm blue}(\theta)+10n_{\rm dash \, red}(\theta)+20n_{\rm red}(\theta).
\end{equation}
In the Table we give exact values of $\mathcal N_{2,5}(\theta)$.

\begin{rk} We note that for the ferromagnetic Ising model (see \cite[Theorem 2.2]{Ro}) there are up to (exactly) three TISGMs and for  the ferromagnetic $q$-state Potts model (see \cite[Theorem 8.3]{Rbp}) there are up to (exactly) $2^q-1$ TISGMs. However, Theorem \ref{Thm3} says that for the ferromagnetic coupled Ising-Potts model there are at lest $2^q+1$ TISGMs, this is not exact upper bound, because for case $k=2$, and $q=5$, as we showed (see Table) that the number of TISGMs is up to exact bound 335, i.e., very big than $2^5+1=33$.
\end{rk}

In Fig. \ref{rang}, we were unable to consider the few values of $\theta$ at which two distinct solutions coincide. Therefore, in the following theorem, we carefully summarize our results using the phrase "at most."

\begin{thm} \label{Thm4}
	For the ferromagnetic ($J>0$, i.e. $\theta>1$) coupled Ising-Potts model (\ref{ph}) on the Cayley tree of
	order two, with $q=5$ there are 12 critical (temperatures) values of $\theta$. For each $\theta>1$ there are at most $\mathcal N_{2,5}(\theta)$  (see Table)  TISGMs.	
\end{thm}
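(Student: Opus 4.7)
The plan is to assemble the theorem by collating the seven Results and the preceding case analysis (Cases 1--4) into a single count, then reading off both the number of critical $\theta$-values and the cardinality $\mathcal N_{2,5}(\theta)$ on each resulting interval. Since the subsection has already reduced the fixed-point equation (\ref{fp}) to the three-variable system (\ref{tej}) on each invariant set $\mathcal J_M$ with $|M|=m$, and since Corollary \ref{c1} reduces attention to $m\in\{0,1,2\}$, it suffices to enumerate positive solutions of (\ref{tej}) in each of the four sub-cases ($u=v$, $w=1$; $u=v$, $w\ne1$; $u\ne v$, $w=1$; $u\ne v$, $w\ne 1$) for $m=0,1,2$, and then apply the combinatorial multiplier in (\ref{nq}).

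First I would collect every threshold that appears in Results 1--7 for $q=5$: the Potts-type threshold $\theta_{{\rm c},0}$ from (\ref{theta1}); the two values $\theta_{{\rm c},1}\approx 9.8989$ and $\theta_{{\rm c},2}\approx 11.7125$ where the $u=v$, $w=1$ branches $\hat z_{2,3}(m)$ appear; the four values $\theta_c^{(1)},\theta_c^{(2)},\theta_c^{(3)},\theta_c^{(4)}$ at which the quartic (\ref{z4}) gains positive roots for $m=1,2$; the four values $\theta_c^{(5)},\hat\theta_c^{(5)},\theta_c^{(6)},\hat\theta_c^{(6)}$ at which the $u\ne v$, $w=1$ branches (Result 5--6) appear and disappear; and the two values $\theta_c^{(7)},\theta_c^{(8)}$ where the $u\ne v$, $w\ne 1$ branches of Result 7 emerge. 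After identifying coincidences among these (for instance $\hat\theta_c^{(6)}=\theta_c^{(1)}\approx 8.3589$ in the numerics), the distinct list contains exactly 12 values, which is the first assertion.

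Next I would verify that on each of the 13 open intervals determined by these 12 thresholds, the counts $n_{\rm green}$, $n_{\rm blue}$, $n_{\rm dash\,blue}$, $n_{\rm red}$, $n_{\rm dash\,red}$ are constant; this is immediate from Results 1--7 because within such an interval no branch is created or destroyed, and the constancy of the discriminants $D$, $2h_i-g_i^2$, $\mathcal D$ and of the sign of $P-(\theta+1)((\theta-1)^2+\theta m)\sqrt{\mathcal D}$ ensures none of the roots coalesce or vanish. Plugging these constant counts into formula (\ref{nq}) then yields $\mathcal N_{2,5}(\theta)$ on each interval, and Corollary \ref{c1} guarantees that every solution of (\ref{fp}) belonging to some $\mathcal J_M$ is obtained in this way (by choosing $M$ up to cardinality, permuting columns of (\ref{mat}), and swapping rows when $(u-v)^2+(w-1)^2\ne 0$), so no TISGM is over- or undercounted.

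The main obstacle, and the reason for the qualifier ``at most'' in the statement, is twofold. First, one must certify that the 12 critical values are really distinct; several of them are defined implicitly as real roots of sextic or higher polynomials in $\theta$ (see the discriminants $\mathcal D$, $\mathcal D_1$, and the quartic (\ref{z4})), so in principle a coincidence could produce a merged critical temperature, and the count of 12 must be checked by comparing numerical values with provable separation bounds (e.g.\ Sturm sequences, or interval arithmetic applied to the resultants of the relevant polynomial pairs). Second, the restriction to solutions lying in some $\mathcal J_M$ means we are not excluding the possibility that (\ref{fp}) admits extra solutions outside these invariant subsets; this is why the theorem bounds the number only from above by $\mathcal N_{2,5}(\theta)$, and the argument above rigorously covers exactly the $\mathcal J_M$-solutions. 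The bound then follows by summing these solutions with their orbit sizes under the symmetry group of (\ref{pt-}), which is precisely (\ref{nq}).
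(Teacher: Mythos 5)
Your overall strategy coincides with the paper's: Theorem \ref{Thm4} is proved there by collating Results 1--7, reading off the 12 distinct thresholds (with the numerical coincidence $\theta_c^{(1)}\approx\hat\theta_c^{(6)}\approx 8.3589$ merged, exactly as you note), and applying the counting formula (\ref{nq}) together with Corollary \ref{c1} interval by interval, as recorded in Fig.~\ref{rang} and the Table.

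There is, however, one genuine logical error in your justification of the qualifier ``at most.'' You assert that the possibility of solutions of (\ref{fp}) lying \emph{outside} the invariant sets $\mathcal J_M$ is ``why the theorem bounds the number only from above.'' This is backwards: if extra solutions outside the $\mathcal J_M$ existed, the enumeration would give a \emph{lower} bound, and the upper-bound claim ``at most $\mathcal N_{2,5}(\theta)$'' would be undermined rather than explained. The paper's actual reason for the cautious phrasing is the opposite failure mode: at finitely many exceptional values of $\theta$ two \emph{distinct branches} in the enumeration may coincide as solutions, so the tally from Fig.~\ref{rang} and (\ref{nq}) could overcount; this is stated explicitly just before the theorem (``we were unable to consider the few values of $\theta$ at which two distinct solutions coincide''). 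Your first caveat (certifying separation of the critical values, e.g.\ by Sturm sequences) is in the right spirit but concerns the thresholds rather than the solution branches themselves. The rest of your argument --- constancy of the branch counts between thresholds because the signs of the discriminants $D$, $\mathcal D$, $\mathcal D_1$ and of $2h_i-g_i^2$ do not change there, and the orbit-size multipliers $\binom{5}{m}$ and $2\binom{5}{m}$ from Corollary \ref{c1} --- matches the paper and is sound.
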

\begin{rk} It is known that the 5-state Potts model has 3 critical temperatures and up to 31 TISGMs (see \cite[Theorem 1]{KRK}). As stated in Theorem \ref{Thm4}, the coupled Ising-Potts model has four times as many critical temperatures as the Potts model and approximately 11 times more TISGMs. Therefore, our model alters the number of phases more rapidly and features a significantly richer class of splitting Gibbs measures.
\end{rk}

$\begin{array}{|l|l|}
\hline
\textbf{Regions of } \theta &  \mathcal{N}_{2,5}(\theta) \\[1.4mm]
\hline
\theta \in (1, \theta_c^{(8)}) & 1 \\[1.4mm]
\hline
\theta = \theta_c^{(8)} \approx 1.5587 & 41 \\[1.4mm]
\hline
\theta \in (\theta_c^{(8)}, \theta_c^{(7)}) & 81 \\[1.4mm]
\hline
\theta = \theta_c^{(7)} \approx 5.8416 & 101 \\[1.4mm]
\hline
\theta \in (\theta_c^{(7)}, \theta_c^{(5)}) & 121 \\[1.4mm]
\hline
\theta = \theta_c^{(5)} \approx 6.3360 & 141 \\[1.4mm]
\hline
\theta \in (\theta_c^{(5)}, \hat{\theta}_c^{(5)}) & 161 \\[1.4mm]
\hline
\theta = \hat{\theta}_c^{(5)} \approx 6.4174 & 151 \\[1.4mm]
\hline
\theta \in (\hat{\theta}_c^{(5)}, \theta_c^{(6)}) & 141 \\[1.4mm]
\hline
\theta = \theta_c^{(6)} \approx 7.7897 & 181 \\[1.4mm]
\hline
\theta \in (\theta_c^{(6)}, \theta_c^{(3)}), \theta_c^{(3)}\approx 8.3376 & 221 \\[1.4mm]
\hline
\end{array}\qquad \begin{array}{|l|l|}
\hline
\textbf{Regions of } \theta & \mathcal{N}_{2,5}(\theta) \\[1.7mm]
\hline
\theta \in [\theta_c^{(3)}, \hat{\theta}_c^{(6)}) & 261 \\[1.7mm]
\hline
\theta = \hat{\theta}_c^{(6)} \approx 8.3589 & 263 \\[1.7mm]
\hline
\theta \in (\hat{\theta}_c^{(6)}, \theta_{c,1}) & 245 \\[1.7mm]
\hline
\theta = \theta_{c,1} \approx 9.8989 & 250 \\[1.7mm]
\hline
\theta \in (\theta_{c,1}, \theta_c^{(2)}) & 255 \\[1.7mm]
\hline
\theta \in [\theta_c^{(2)}, \theta_{c,2}) & 275 \\[1.7mm]
\hline
\theta = \theta_{c,2} \approx 11.7125 & 285 \\[1.7mm]
\hline
\theta \in (\theta_{c,2}, \theta_c) & 295 \\[1.7mm]
\hline
\theta = \theta_c \approx 11.9160 & 280 \\[1.7mm]
\hline
\theta \in (\theta_c, \theta_c^{(4)}), \theta_c^{(4)}\approx 11.9871 & 295 \\[1.7mm]
\hline
\theta \in [\theta_c^{(4)}, +\infty) & 335 \\[1.7mm]
\hline
\end{array}.$

\section{Conditions of extremality of TISGMS}
In this section, for a given TISGM $\mu_\ell$, (where $\ell=(u,v,w)$ is a solution to (\ref{tej})),  we find conditions (on temperature)  under which $\mu$ is extreme (or non-extreme).
Below we need explicit formulas of solutions $\ell$, which are  given in the previous sections.

\begin{rk} For each fixed $m\leq [{q\over 2}]$, the symmetry of the
	Ising-Potts model allows us to study extremality of TISGMs only for solutions of the form (\ref{mat}), i.e.,extremality is invariant under permutations in coordinates of solutions.
\end{rk}
Given a solution ${\bf z}=\left(z_{\epsilon,i}: \, \epsilon\in\{-1,1\}, \, i=1, 2, \dots, q\right)$ to  (\ref{pt}), (\ref{pt}), the marginals with respect to TISGM $\mu_{{\bf z}}$, on the two-site volumes which consist of two adjacent sites $x,y$,
in the case of the Ising-Potts model is
$$\mu_{{\bf z}}((s(x)=\eta, \s(x)=i),(s(y)=\epsilon, \s(y)=j)
)= \frac{1}{Z} z_{\eta, i}  \exp(J\beta\eta\epsilon\delta_{i j}) z_{\epsilon,j}.$$
From this  the relation between a solution and the transition matrix for the associated tree-indexed Markov chain, in our case the transition matrix $\mathbb{P}$ is given by formula  of the conditional probability. Indeed, the
corresponding transition matrix giving the probability to go from a state $(\eta, i)$
at site $x$ to a state
to $(\epsilon, j)$ at the neighbor $y$
is then
\begin{equation}\label{epe2}
	P_{(\eta, i)(\epsilon, j)}={\theta^{\eta\epsilon\delta_{i j}} z_{\epsilon,j}\over \sum_{\hat\epsilon\in \{-1,1\}}\sum_{r=1}^q\theta^{\eta\hat\epsilon\delta_{i r}} z_{\hat\epsilon,r}}.
\end{equation}
In our investigated case we have
\begin{equation}\label{mat1}
	\left(\begin{array}{ll}
		z_{-1,i}\\
		z_{1,i}\end{array}\right)_{i=1}^q=
	\left(\begin{array}{cccccccc}
		u & u & \dots & u & 1 & 1 & \dots & 1\\
		v & v & \dots & v & w & w & \dots & w\end{array}\right),\end{equation}
where $u$ (and $v$) is written $m$ times, and 1 (and $w$)  $q-m$ times.

For all $j = 2, \dots, q$, and noting that the set of vectors
$\{e_1, e_2, \dots, e_q\}$ forms the canonical basis of $ \mathbb{R}^q$, we define the following expressions:
 $$\begin{aligned}
\varsigma_{11} &= \frac{1}{\mathcal{Z}_1} \left( \theta u e_1 + \sum_{i=2}^q e_i \right),
& \varsigma_{1j} &= \frac{1}{\mathcal{Z}_2} \left( u e_1 + (\theta - 1)e_j + \sum_{i=2}^q e_i \right), \\
\varsigma_{21} &= \frac{1}{\mathcal{Z}_1} \left( \theta^{-1} v e_1 + w \sum_{i=2}^q e_i \right),
& \varsigma_{2j} &= \frac{1}{\mathcal{Z}_2} \left( v e_1 + w(\theta^{-1} - 1)e_j + w \sum_{i=2}^q e_i \right), \\
\varsigma_{31} &= \frac{1}{\mathcal{Z}_3} \left( \theta^{-1} u e_1 + \sum_{i=2}^q e_i \right),
& \varsigma_{3j} &= \frac{1}{\mathcal{Z}_4} \left( u e_1 + \left(\theta^{-1} - 1\right)e_j + \sum_{i=2}^q e_i \right), \\
\varsigma_{41} &= \frac{1}{\mathcal{Z}_3} \left( \theta v e_1 + w \sum_{i=2}^q e_i \right),
& \varsigma_{4j} &= \frac{1}{\mathcal{Z}_4} \left( v e_1 + w(\theta - 1)e_j + w \sum_{i=2}^q e_i \right)
\end{aligned} $$
with
$$\begin{gathered}
\mathcal{Z}_1=\theta u+\theta^{-1} v+(q-1)(w+1), \quad \mathcal{Z}_2=\theta+u+v+\theta^{-1} w+(q-2)(w+1), \\
\mathcal{Z}_3=\theta^{-1} u+\theta v+(q-1)(w+1), \quad \mathcal{Z}_4=\theta^{-1}+u+v+\theta w+(q-2)(w+1).
\end{gathered}$$
For solutions of the form (\ref{mat1}), in the case where $m = 1$, the transition matrix $\mathbb{P}_m$, as defined by (\ref{epe2}), is expressed as follows:
\begin{equation}\label{PT}
\mathbb P_1	= \left(\begin{array}{cc}
{\mathbb P_{11}} & {\mathbb P_{12}}\\[2mm]
{\mathbb P_{21}} & {\mathbb P_{22}}
\end{array}\right),\end{equation}
where
$$\begin{array}{ll}
\mathbb{P}_{11}=\left(\varsigma_{11}, \varsigma_{12}, \ldots, \varsigma_{1 q}\right)^t, & \mathbb{P}_{12}=\left(\varsigma_{21}, \varsigma_{22}, \ldots, \varsigma_{2 q}\right)^t, \\[2mm]
\mathbb{P}_{21}=\left(\varsigma_{31}, \varsigma_{32}, \ldots, \varsigma_{3 q}\right)^t, & \mathbb{P}_{22}=\left(\varsigma_{41}, \varsigma_{42}, \ldots, \varsigma_{4 q}\right)^t.
\end{array}$$
\subsection{Conditions of non-extremality}
It is established that a sufficient condition for the non-extremality of a Gibbs measure $ \mu $ corresponding to the matrix $ \mathbb{P} $ is given by $ k\lambda_2^2 > 1 $, where $ \lambda_2 $ represents the second largest eigenvalue of $ \mathbb{P} $ in absolute value \cite{Ke}.
In the particular case where $ u = v = w = 1 $, the normalization constants are equal, $ \mathcal{Z}_1 = \mathcal{Z}_2 = \mathcal{Z}_3 = \mathcal{Z}_4 = \theta + \theta^{-1} + 2(q-1) $. The matrix $ \mathbb{P}_1 $ can then be expressed as $ \mathbb{P}_1 = \frac{1}{\mathcal{Z}_1} \cdot \mathbb{P} $, where
$$\mathbb{P} =
\begin{pmatrix}
\theta \cdot E + I & \theta^{-1} E + I \\[2mm]
\theta^{-1} E + I & \theta \cdot E + I
\end{pmatrix},
$$
with $ E $ denoting the $ q \times q $ identity matrix, and $ I = (a_{ij}) $ defined as:
$$
a_{ij} =
\begin{cases}
0, & \text{if } i = j, \\
1, & \text{if } i \neq j.
\end{cases}
$$

\begin{pro}The matrix $\mathbb{P}_1$ has three distinct eigenvalues: $1$,
	$$\lambda_1(\theta) = \frac{(\theta-1)^2}{\theta^2 + 2(q-1)\theta + 1}, \ \ \mbox{with multiplicity} \  \ q-1,$$ $$\lambda_2(\theta) = \frac{\theta^2-1}{\theta^2 + 2(q-1)\theta + 1}, \ \  \mbox{with multiplicity}  \ \ q.$$
\end{pro}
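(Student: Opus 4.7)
The plan is to exploit the block symmetry of $\mathbb{P}$ and the structure of $I=J-E$ (where $J$ denotes the $q\times q$ all-ones matrix, in the authors' non-standard notation in which $E$ is the identity) to reduce the eigenvalue problem to a $q\times q$ one. Write
\[
\mathbb{P}=\begin{pmatrix} A & B \\ B & A\end{pmatrix},\qquad A=\theta E+I,\ B=\theta^{-1}E+I.
\]
Because the two diagonal blocks coincide and the two off-diagonal blocks coincide, any eigenvector of $\mathbb{P}$ can be taken in either the ``symmetric'' form $(x,x)^t$ or the ``antisymmetric'' form $(x,-x)^t$. Substituting shows that $(x,x)^t$ is an eigenvector with eigenvalue $\lambda$ iff $(A+B)x=\lambda x$, and $(x,-x)^t$ is an eigenvector with eigenvalue $\lambda$ iff $(A-B)x=\lambda x$. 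Hence $\mathrm{Spec}(\mathbb{P})=\mathrm{Spec}(A+B)\cup\mathrm{Spec}(A-B)$ with multiplicities added.

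Next I would compute the two $q\times q$ blocks explicitly. Since $I=J-E$, we have
\[
A+B=(\theta+\theta^{-1})E+2I=(\theta+\theta^{-1}-2)E+2J,\qquad A-B=(\theta-\theta^{-1})E.
\]
The matrix $A-B$ is a scalar multiple of the identity, so it contributes the eigenvalue $\theta-\theta^{-1}$ with multiplicity $q$. The matrix $A+B$ is a rank-one perturbation of a scalar matrix, whose spectrum follows from the well-known spectrum of $J$ (eigenvalue $q$ on the all-ones vector, eigenvalue $0$ on its $(q-1)$-dimensional orthogonal complement). This yields $\theta+\theta^{-1}+2(q-1)$ with multiplicity $1$ and $\theta+\theta^{-1}-2$ with multiplicity $q-1$.

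Finally, since $\mathbb{P}_1=\mathcal{Z}_1^{-1}\mathbb{P}$ with $\mathcal{Z}_1=\theta+\theta^{-1}+2(q-1)$, the eigenvalues of $\mathbb{P}_1$ are obtained by dividing through by $\mathcal{Z}_1$. The symmetric block gives $1$ (multiplicity $1$) and $(\theta+\theta^{-1}-2)/\mathcal{Z}_1$ (multiplicity $q-1$), while the antisymmetric block gives $(\theta-\theta^{-1})/\mathcal{Z}_1$ (multiplicity $q$). Multiplying numerator and denominator of each fraction by $\theta$ rewrites these as $(\theta-1)^2/(\theta^2+2(q-1)\theta+1)$ and $(\theta^2-1)/(\theta^2+2(q-1)\theta+1)$, which are exactly the stated $\lambda_1$ and $\lambda_2$. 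The total multiplicity is $1+(q-1)+q=2q$, matching the size of $\mathbb{P}_1$.

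There is no genuine obstacle here: the only point demanding care is the bookkeeping of multiplicities and the verification that the two forms of eigenvectors $(x,\pm x)^t$ exhaust $\mathbb{R}^{2q}$, which follows from the direct sum decomposition $\mathbb{R}^{2q}=\{(x,x)\}\oplus\{(x,-x)\}$. Everything else is the algebraic identity $I=J-E$ and the spectrum of the all-ones matrix.
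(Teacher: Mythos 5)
Your proof is correct, and it takes a genuinely cleaner route than the paper's. The paper computes $\det(\mathbb{P}-\lambda E)$ by a block elimination (Schur-complement style) argument: it left-multiplies by a block lower-triangular matrix to reach $\det\bigl[((\theta-\lambda)E+I)^2-(\theta^{-1}E+I)^2\bigr]$, factors this difference of commuting squares into $(\theta-\theta^{-1}-\lambda)^q\det\bigl[2I+(\theta+\theta^{-1}-\lambda)E\bigr]$, and then has to treat separately (``Case 2'', via an $\varepsilon$-perturbation and continuity of the determinant) the values of $\lambda$ for which $(\theta-\lambda)E+I$ is not invertible. You instead use the invariant-subspace decomposition $\mathbb{R}^{2q}=\{(x,x)\}\oplus\{(x,-x)\}$ for the block matrix $\binom{A\ B}{B\ A}$, reducing directly to $\mathrm{Spec}(A+B)\cup\mathrm{Spec}(A-B)$; this sidesteps the invertibility issue entirely, so no perturbation argument is needed, and it makes transparent why $\theta-\theta^{-1}$ appears with multiplicity exactly $q$ (namely $A-B$ is scalar). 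Both arguments then rely on the same elementary fact, the spectrum of $\theta E+I=(\theta-1)E+J$ via the all-ones vector and its orthogonal complement, and both finish by dividing by $\mathcal{Z}_1=\theta+\theta^{-1}+2(q-1)$ and clearing $\theta^{-1}$ from the fractions. Your multiplicity bookkeeping $1+(q-1)+q=2q$ is right, and your closing remark that the two families of vectors $(x,\pm x)^t$ span $\mathbb{R}^{2q}$ is exactly the point needed to conclude that the union of the two spectra exhausts the spectrum of $\mathbb{P}$.
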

\begin{proof} Let $\mathbb{P}u = \lambda u$. Then,
$\mathbb{P}_1 u = \frac{1}{\mathcal{Z}_1} \mathbb{P}u = \frac{1}{\mathcal{Z}_1} \lambda u.$ Thus, the eigenvalue of $\mathbb{P}_1$ is $\frac{\lambda}{\mathcal{Z}_1}$. Consequently, it suffices to determine the eigenvalues of the matrix $\mathbb{P}$.
Consider the matrix $\theta \cdot E + I = A$ as a linear transformation on $\mathbb{R}^q$. It follows directly that $A u = (\theta + q-1) u,$ with
$u = (1, 1, \ldots, 1)^{t}.$
Define $V = \{v \in \mathbb{R}^q \mid (v, u) = 0\}$ and $e_i = (\alpha_{i1}, \alpha_{i2}, \ldots, \alpha_{iq})^{t}$, $i = 1, \ldots, q-1$, with
$\alpha_{ii} = 1, \alpha_{iq} = -1$ and for the other cases $\alpha_{ij} = 0.$
Clearly, $$V = \operatorname{span}\{e_i \mid i = 1, \ldots, q-1\}.$$ Furthermore, it can be verified that
$A e_i = (\theta-1) e_i \quad \text{for all } i = 1, \ldots, q-1$. Since $\dim V=q-1$ and all vectors in $V$ are eigenvectors, the linear transformation $A$ has two distinct eigenvalues: $\theta + q-1$ and $\theta - 1$, with multiplicities $1$ and $q-1$, respectively.
To find the eigenvalues of $\mathbb{P}$, consider:
$$\det(\mathbb{P} - \lambda E) = \det\left(\begin{array}{cc}
(\theta - \lambda)E + I & \theta^{-1}E + I \\[2mm]
\theta^{-1}E + I & (\theta - \lambda)E + I
\end{array}\right).$$

\textbf{Case 1:} $\lambda \notin \{\theta - 1, \theta + q - 1\}$.
In this case, $(\theta - \lambda)E + I$ is invertible. Denoting $((\theta - \lambda)E + I)B_\lambda = E$, we obtain:
$$\left(\begin{array}{cc}
E & 0 \\[2mm]
-\left(I + \theta^{-1}E\right)B_\lambda & E
\end{array}\right) \cdot \left(\begin{array}{cc}
(\theta - \lambda)E + I & \theta^{-1}E + I \\[2mm]
\theta^{-1}E + I & (\theta - \lambda)E + I
\end{array}\right)=$$
$$\left(\begin{array}{cc}
(\theta - \lambda)E + I & \theta^{-1}E + I \\[2mm]
0 & (\theta - \lambda)E + I - \left(I + \theta^{-1}E\right)B_\lambda\left(I + \theta^{-1}E\right)
\end{array}\right).$$\\[1mm]
From this equality,
$$
\det(\mathbb{P} - \lambda E) = \det((\theta - \lambda)E + I) \cdot \det\left[(\theta - \lambda)E + I - \left(I + \theta^{-1}E\right)B_\lambda\left(I + \theta^{-1}E\right)\right].
$$
Simplifying further,
$$
\det(\mathbb{P} - \lambda E) = \det\left[((\theta - \lambda)E + I)^2 - \left(I + \theta^{-1}E\right)^2\right].
$$
This simplifies to:
$$
\det(\mathbb{P} - \lambda E) = \left(\theta - \theta^{-1} - \lambda\right)^q \cdot \det\left[2I + \left(\theta + \theta^{-1} - \lambda\right)E\right].
$$

The eigenvalues of the linear transformation $2I + \left(\theta + \theta^{-1} - \lambda\right)E$ are found to be $\theta + \theta^{-1} + 2(q-1)$ and $\theta + \theta^{-1} - 2$. Thus, the matrix $\mathbb{P}$ has three eigenvalues:
$\theta + \theta^{-1} + 2(q-1)$, $\theta - \theta^{-1}$ (multiplicity $q$), and  $\theta+\theta^{-1}-2$ (multiplicity $q-1$).\medskip

\textbf{Case 2:} $\lambda \in \{\theta - 1, \theta + q - 1\}$.
Here, $(\theta - \lambda)E + I$ is not invertible. However, for any $\varepsilon$ such that $0 < \varepsilon < \alpha(\theta)$, where
$$\alpha(\theta) =
\begin{cases}
|\theta - 1| & \text{if } \theta \neq 1, \\
q & \text{if } \theta = 1,
\end{cases}$$
we have $\det[(\theta - \lambda + \varepsilon)E + I] \neq 0$. Defining \medskip
$$\mathbb{P}_\varepsilon = \left(\begin{array}{cc}
(\theta + \varepsilon)E + I & \theta^{-1}E + I \\[2mm]
\theta^{-1}E + I & \theta E + I
\end{array}\right),$$\medskip
and proceeding as in Case 1, we find
$$
\det(\mathbb{P}_\varepsilon - \lambda E) = \det\left[(\theta E + I)((\theta + \varepsilon)E + I) - \left(\theta^{-1}E + I\right)^2\right].
$$
By continuity of the determinant function, taking the limit as $\varepsilon \to 0$ yields the result.
\end{proof}
It is easy to see that $\lambda_1(\theta)\leq|\lambda_2(\theta)|$ for any $\theta>0$ and the equality only at $\theta=1$.
Therefore non-extremality of the measure $\mu_1$ corresponding to solution\footnote{Recall that the solution $u=v=w=1$ to (\ref{tej}) exists independently on all parameters.} for  $u=v=w=1$ is (for $k=2$)
\begin{equation}\label{qq}
	2{(\theta^2-1)^2\over (\theta^2+2(q-1)\theta+1)^2}>1.
	\end{equation}
For $\theta\leq 1$ (resp. $\theta>1$) the equality (\ref{qq}) can be reduced to
$$(1+\sqrt{2})\theta^2+2(q-1)\theta+1-\sqrt{2}<0$$
$$\left({\rm resp.} \ \ (\sqrt{2}-1)\theta^2-2(q-1)\theta-(\sqrt{2}+1)>0. \right)$$
Solving these inequalities we find that the inequality (\ref{qq}) is satisfied for all
\begin{equation}\label{bire} \theta\in \left(0, {1\over \theta_1(q)}\right)\cup
 \left(\theta_1(q), +\infty\right).
 \end{equation}
where
$$\theta_1(q)={q-1+\sqrt{(q-1)^2+1}\over \sqrt{2}-1}.$$
 Thus we have

 {\bf Result 8.} For each $q\geq 2$, the measure $\mu_1$  is not extreme for all values of  $\theta$  that satisfy (\ref{bire}).\\

 {\bf Case:} $u=v=1$, $w\ne 1$. In this case we have two measures $\mu_*$ and $\mu^*$ which correspond to solutions mentioned in Result 1. In this case by a computer we have explicitly found that the matrix (\ref{mat1}) has 5 eigenvalues: 1 and
 $\lambda_i(\theta, w), i=1,2,3,4.$
 Furthermore, four of these eigenvalues have a multiplicity of 1, while two of them have a multiplicity of 3. Since the eigenvalues have very long formulas, we do not present them here. But for concrete values $w=z_*$ and $w=z^*$ we give explicitly the second largest ones.

 For $q=5$, $k=2$ and $w=z_*$ from explicit formulas by computer we note that (see Fig.\ref{ex0})

 $$\max_{i=1,2,3,4} |\lambda_i(\theta,z_*)|=
 \lambda_1(\theta,z_*)={\theta^2-1\over \theta^2+4\theta z_*+4\theta+1}$$
 where for $\theta>\theta_{c,0}$ the solution is
  $$z_*=
 \frac{1}{2} \cdot \frac{\theta^4 - 34 \theta^2 - 16 \theta- 1-\sqrt{\theta^8 - 68 \theta^6 - 32 \theta^5 + 130 \theta^4 + 64 \theta^3 - 60 \theta^2 - 32 \theta - 3} }{16 \theta^2 + 8 \theta + 1}.
 $$
 \begin{figure}[h!]
 	\begin{center}
 		\includegraphics[width=8cm]{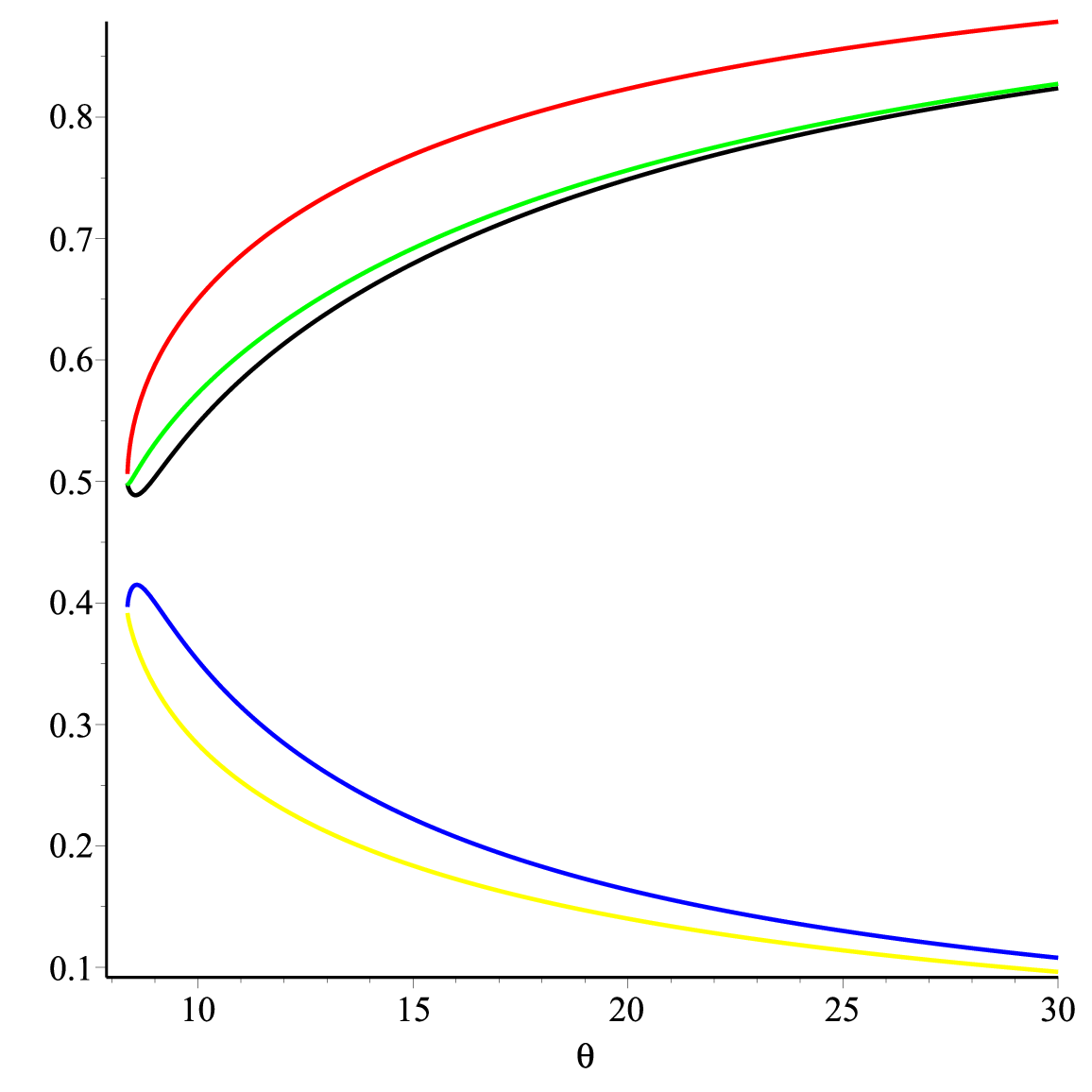}
 	 	\end{center}
 	\caption{The graph of $\lambda_1(\theta, z_*)$ (red),  $\lambda_2$ (black), $\lambda_3$ (blue), $\lambda_4$ (green, which has multiplicity 3),  $\lambda_5$ (yellow, with multiplicity 3), for $q=5$, $\theta>\theta_{c,0}$.}\label{ex0}
 \end{figure}
Thus sufficient condition of non-extremality of $\mu_*$ is
$$2 \lambda^2_1(\theta, z_*)-1>0,$$
and computer analysis shows that this condition is satisfied for
any
\begin{equation}\label{ex6}
	\theta>\theta_*\approx 11.76
	\end{equation}
{\bf Result 9.} For $q=5$ the measure $\mu_*$ is not extreme for
all $\theta$ satisfying (\ref{ex6}).\\

 For $q=5$, $k=2$ and $w=z^*$ using explicit formulas of $\lambda_i(\theta, w)$ by computer we obtain that (see Fig.\ref{exa}):

$$\max_{i=1,2,3,4} |\lambda_i(\theta, z^*)|=
\lambda_4(\theta, z^*),$$
for $\theta>\theta_{c,0}$.
Here
$$\lambda_4(\theta, w) := \frac{\theta-1}{2M} \left((3\theta+1)\theta w^2+ 2 (\theta+4)\theta^2 w + 5 \theta^2 + \theta+ \sqrt{D_1} \right),
$$
with
$$D_1:=
(3\theta+1)^2\theta^2 w^4 + 4\left(3 \theta^3 + 4 \theta^2 + 10 \theta + 3\right)\theta w^3 $$ $$-2\left( 11 \theta^4 -8 \theta^3 -59 \theta^2 -16 \theta -2 \right)w^2 -4\left(5\theta^3 -4 \theta^2 -26 \theta -5 \right)\theta w + (5\theta + 1)^2\theta^2.$$

$$M:=\left( 3 \theta^3 + 10 \theta^2 + 3 \theta \right)w^2 + \left( \theta^4 + 8 \theta^3 + 30 \theta^2 + 8 \theta + 1 \right) w + 5 \theta^3 + 26 \theta^2 + 5 \theta.
$$
and the solution is
$$z^*=
\frac{1}{2} \cdot \frac{\theta^4 - 34 \theta^2 - 16 \theta- 1+\sqrt{\theta^8 - 68 \theta^6 - 32 \theta^5 + 130 \theta^4 + 64 \theta^3 - 60 \theta^2 - 32 \theta - 3} }{16 \theta^2 + 8 \theta + 1}.
$$
\begin{figure}[h!]
	\begin{center}
		\includegraphics[width=8cm]{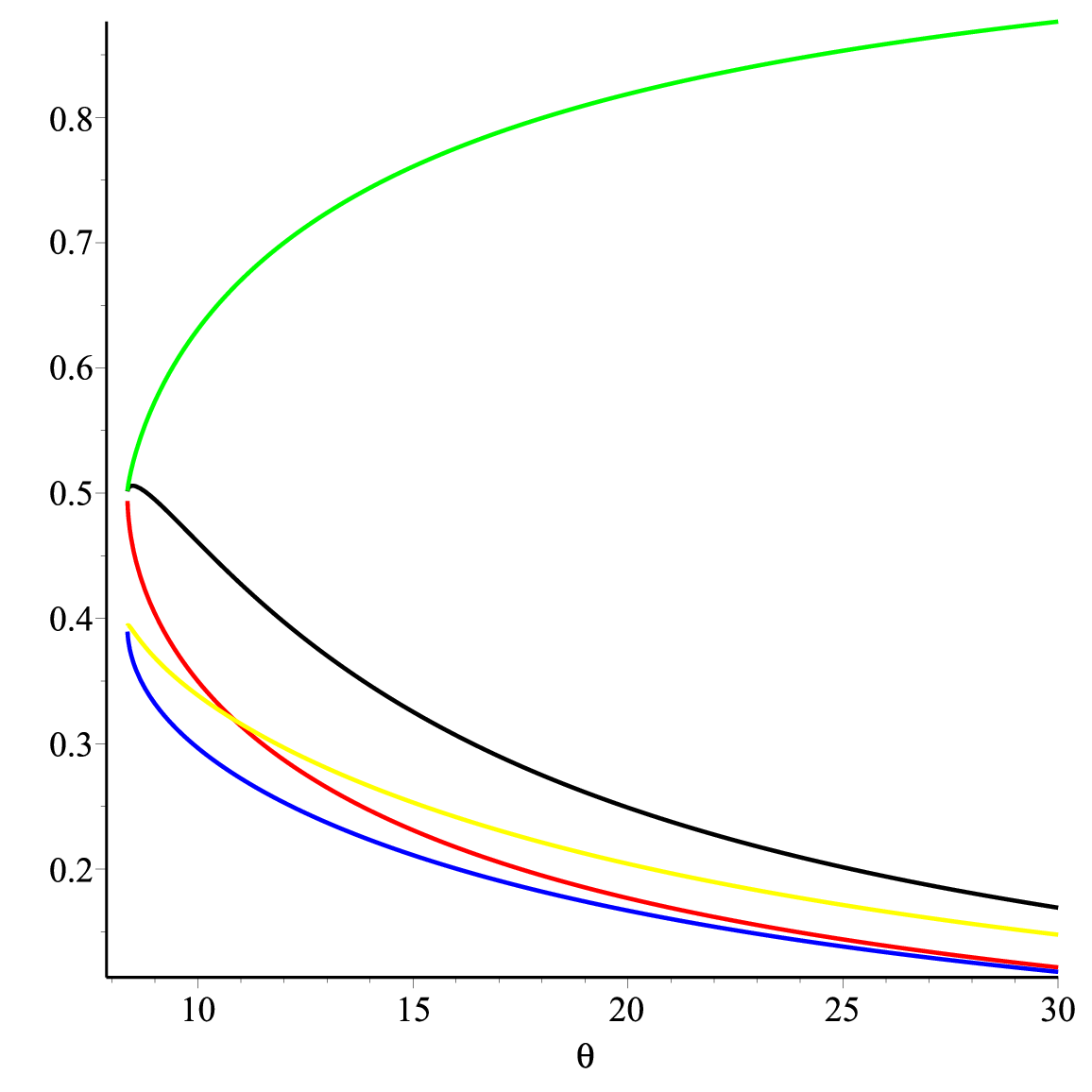}
	\end{center}
	\caption{The graph of $\lambda_1(\theta, z^*)$ (red),  $\lambda_2$ (black), $\lambda_3$ (blue), $\lambda_4$ (green, which has multiplicity 3),  $\lambda_5$ (yellow, with multiplicity 3), for $q=5$, $\theta>\theta_{c,0}$.}\label{exa}
\end{figure}

Thus sufficient condition of non-extremality of $\mu^*$ is
$$2 \lambda^2_4(\theta, z^*)-1>0,$$
and computer analysis shows that this condition is satisfied for
any
\begin{equation}\label{ex7}
	\theta>\theta^*\approx 12.28
\end{equation}
{\bf Result 10.} For $q=5$ the measure $\mu^*$ is not extreme for
all $\theta$ satisfying (\ref{ex7}).\\

\begin{rk} 	
	In the remaining cases (i.e., Case 1: $u=v\ne 1$, $w=1$; Case 2: $u=v, w\ne 1$;  Case 3: $u\ne v$, $w=1$; and Case 4: $u\ne v$, $w\ne 1$) we were unable to establish non-extremality conditions. For instance, in Case 1, our numerical analysis aimed at finding eigenvalues resulted in the message: "Length of output exceeds limit of 1,000,000."
\end{rk}

\subsection{Conditions of extremality}
    In this subsection we establish sufficient conditions for extremality of TISGMs corresponding to solutions mentioned in Result 1  for the $q=5$. That are the TISGMs: $\mu_1$, $\mu_*$ and $\mu^*$ (as denoted in the previous subsection).

To check extremality, we will use a result of~\cite{MSW} to establish a bound for reconstruction impossibility (which is equivalent to extremality) that corresponds to the matrix~\eqref{PT}) for solutions $u=v=1$ and $w\in \{1, z_*, z^*\}$.
 For such solutions the transition matrix is
   \begin{equation}\label{ET}
\mathbb P^{\prime}_1	= \left(\begin{array}{cc}
{\mathbb P^{\prime}_{11}} & {\mathbb P^{\prime}_{12}}\\[2mm]
{\mathbb P^{\prime}_{21}} & {\mathbb P^{\prime}_{22}}
\end{array}\right), \quad \begin{array}{ll}
\mathbb{P}^{\prime}_{11}=\left(\varsigma_{11}, \varsigma_{12}, \ldots, \varsigma_{1 q}\right)^t, & \mathbb{P}^{\prime}_{12}=\left(\varsigma_{21}, \varsigma_{22}, \ldots, \varsigma_{2 q}\right)^t, \\[2mm]
\mathbb{P}^{\prime}_{21}=\left(\varsigma_{31}, \varsigma_{32}, \ldots, \varsigma_{3 q}\right)^t, & \mathbb{P}^{\prime}_{22}=\left(\varsigma_{41}, \varsigma_{42}, \ldots, \varsigma_{4 q}\right)^t.
\end{array}\end{equation}
Here
    $$\begin{aligned}
\varsigma_{11} &= \frac{1}{\mathcal{Z}_1} \left( (\theta-1) e_1 + \sum_{i=1}^q e_i \right),
& \varsigma_{1j} &= \frac{1}{\mathcal{Z}_2} \left((\theta - 1)e_j + \sum_{i=1}^q e_i \right), \\
\varsigma_{21} &= \frac{1}{\mathcal{Z}_1} \left( \theta^{-1} e_1 + w \sum_{i=2}^q e_i \right),
& \varsigma_{2j} &= \frac{1}{\mathcal{Z}_2} \left(e_1 + w(\theta^{-1} - 1)e_j + w \sum_{i=2}^q e_i \right), \\
\varsigma_{31} &= \frac{1}{\mathcal{Z}_4} \left( (\theta^{-1}-1)e_1 + \sum_{i=1}^q e_i \right),
& \varsigma_{3j} &= \frac{1}{\mathcal{Z}_4} \left(\left(\theta^{-1} - 1\right)e_j + \sum_{i=1}^q e_i \right), \\
\varsigma_{41} &= \frac{1}{\mathcal{Z}_4} \left( \theta e_1 + w \sum_{i=2}^q e_i \right),
& \varsigma_{4j} &= \frac{1}{\mathcal{Z}_4} \left(e_1 + w(\theta - 1)e_j + w \sum_{i=2}^q e_i \right)
\end{aligned} $$
with
$$\begin{gathered}
\mathcal{Z}_1=\theta+\theta^{-1}+(q-1)(w+1), \ \mathcal{Z}_2=\theta+q+(\theta^{-1}+q-2)w:=\mathcal{Z}_2(\theta), \ \mathcal{Z}_4=\mathcal{Z}_2(\theta^{-1}).
\end{gathered}$$

   To enhance clarity for the reader, let us begin by some definitions from~\cite{MSW}. Considering finite complete subtrees $\mathcal T$ that are {\em initial} with respect to the Cayley tree $\Gamma^k$, i.e., share the same root. If $\mathcal T$ has depth $d$, i.e., the vertices of $\mathcal T$ are within distance $\leq d$ from the root, then it has $(k^{d+1}-1)/(k-1)$ vertices, and its boundary $\partial \mathcal T$ consists of the neighbors (in $\Gamma^k\setminus \mathcal T$) of its vertices, i.e., $|\partial \mathcal T|=k^{d+1}$.  We identify subgraphs of $\mathcal T$ with their vertex sets and write $E(A)$ for the edges within a subset $A$ and $\partial A$ for the boundary of $A$, i.e., the neighbors of $A$ in $(\mathcal T\cup \partial\mathcal T)\setminus A$.

  Let  $\{\mu^\tau_{{\mathcal T}}\}$ be Gibbs measures, where the boundary condition $\tau$
   is fixed and $\mathcal T$ ranges over all initial finite complete subtrees of $\Gamma^k$.

   Take subtree $\mathcal T$ of $\Gamma^k$ and a vertex $x\in\mathcal T$, and  write $\mathcal T_x$ for
   the maximal subtree of $\mathcal T$ rooted at $x$. If $x$ is not the root of $\mathcal T$, then let $\mu_{\mathcal T_x}^s$
   denote the (finite-volume) Gibbs measure in which the
   parent of $x$ has its spin fixed to $s$ and the configuration on the bottom boundary of ${\mathcal T}_x$
   (i.e., on $\partial {\mathcal T}_x\setminus \{\mbox{parent\ \ of}\ \ x\}$) is specified by $\tau$.

   The variational distance between the projections of two measures $\mu_1$ and $\mu_2$ onto the spin at $x$, defined as
   $$\|\mu_1-\mu_2\|_x:={1\over 2}\sum_{h\in I\times \Phi}|\mu_1(\sigma(x)=h)-\mu_2(\sigma(x)=h)|.$$
   Let $s\in I\times \Phi$ and $\eta^{x,s}$ be the
   configuration $\eta$ with  $\eta(x)=s$.

   Define (see \cite{MSW}):
   $$\kappa:= \kappa(\mu)=\sup_{x\in\Gamma^k}\max_{x,s,s'}\|\mu^s_{{\mathcal T}_x}-\mu^{s'}_{{\mathcal T}_x}\|_x\quad\text{ and }\quad\gamma:=\gamma(\mu)=\sup_{A\subset \Gamma^k}\max\|\mu^{\eta^{y,s}}_A-\mu^{\eta^{y,s'}}_A\|_x,$$
   where the maximum on the right-hand side is taken over all boundary conditions $\eta$, all sites $y\in \partial A$, all neighbors $x\in A$ of $y$, and all spins $s, s'\in I\times \Phi$. We apply~\cite[Theorem 9.3]{MSW} which goes as follows.
   \begin{thm}\label{tmw} For an arbitrary (ergodic\footnote{Ergodicity here means irreduciblity and aperiodicity. In this case, we have a unique stationary distribution $\pi=(\pi_1,\dots,\pi_q)$ with $\pi_i>0$ for all $i$.} and permissive\footnote{Permissive here means that, for arbitrary finite $A$ and boundary condition outside $A$ being $\eta$, the conditioned Gibbs measure on $A$, corresponding to the channel, is positive for at least one configuration.}) channel ${\mathbb P}=(P_{ij})_{i,j=1}^q$
   	on a tree, the reconstruction of the corresponding tree-indexed Markov chain
   	is impossible if $k\kappa\gamma<1$.
   \end{thm}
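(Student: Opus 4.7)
The plan is to prove reconstruction impossibility by showing that for any exhausting sequence of initial subtrees $\mathcal T_d$ of depth $d$, the total variation distance $\max_{\tau,\tau'}\|\mu^\tau_{\mathcal T_d}-\mu^{\tau'}_{\mathcal T_d}\|_\rho$ at the root $\rho$ tends to $0$ as $d\to\infty$. By the triangle inequality this reduces to bounding the effect of a single-site boundary disagreement between configurations $\tau,\tau'$ that differ only at some $y\in\partial\mathcal T_d$, which I would then have to combine over the $|\partial\mathcal T_d|\asymp k^d$ boundary vertices.

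For a single-site disagreement I would argue by induction down the unique geodesic $y=x_d,x_{d-1},\dots,x_0=\rho$. The induction hypothesis at level $i$ controls the variational distance at $x_i$ between the child-subtree Gibbs measures $\mu^{\tau}_{\mathcal T_{x_i}}$ and $\mu^{\tau'}_{\mathcal T_{x_i}}$. To pass one level up, I would condition on the spin at $x_{i-1}$, invoke the constant $\kappa$ to control how fixing a parent spin affects the root-marginal of a subtree, and invoke the constant $\gamma$ to control how a neighbor-boundary perturbation propagates across an edge in an arbitrary finite set. Combining these with a sum over the $k$ children of $x_{i-1}$ yields a one-step contraction by $k\kappa\gamma$; iterating $d$ times would give the decay $(k\kappa\gamma)^d$.

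The main obstacle, and the core of the \cite{MSW} argument, is to aggregate over all boundary disagreements without losing the gain, since a naive union bound costs $k^d$ and wipes out the decay. I would address this by running a simultaneous coupling that, level by level, tracks the \emph{expected} number of descendants whose marginals still record a disagreement; the one-step inequality then shows that this expectation contracts by $k\kappa\gamma$ per level, and Markov's inequality converts the resulting geometric decay into the desired variational-distance bound at $\rho$. The delicate point is that $\kappa$ and $\gamma$ are defined with different conditioning schemes (parent-spin fixing versus neighbor-boundary perturbation), so the coupling has to be arranged so that each appeal to $\kappa$ and each appeal to $\gamma$ hits exactly the right conditional distribution; verifying this compatibility, and in particular showing that no looser constant sneaks in at the interface between the two, is where I expect the main technical difficulty to lie.
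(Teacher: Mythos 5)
First, a point of calibration: the paper does not prove Theorem \ref{tmw} at all --- it is quoted verbatim from \cite[Theorem 9.3]{MSW} and used as a black box --- so your attempt can only be judged against the argument in \cite{MSW}. Measured that way, your outline has the right ingredients (telescoping over single-site boundary disagreements, a recursion along the geodesic alternating appeals to $\kappa$ and $\gamma$), but it misplaces the factor $k$, and this error then drives you toward unnecessary and misdirected machinery. For two boundary conditions differing at a \emph{single} site $y\in\partial\mathcal T$, the discrepancy propagates only along the unique path from $y$ to the root: at each vertex of that path exactly one child subtree is affected, so the per-level contraction is $\kappa\gamma$ with no sum over the $k$ children, and the single-site influence at the root is of order $(\kappa\gamma)^{d}$. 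The factor $k^{d}$ enters exactly once, through the telescoping over the $|\partial\mathcal T|=k^{d+1}$ boundary vertices, giving a total of order $(k\kappa\gamma)^{d}\to 0$. In other words, the ``naive union bound'' you identify as the main obstacle \emph{is} the argument, and it does not wipe out the decay.

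Consequently, the device you introduce to overcome the (nonexistent) obstacle --- a one-step contraction by $k\kappa\gamma$ for a single disagreement combined with a coupling tracking the expected number of disagreeing descendants --- does not fit together: if the single-disagreement one-step factor really were $k\kappa\gamma$, the subsequent union bound would cost $(k^{2}\kappa\gamma)^{d}$, while your expectation-plus-Markov device, once unwound, is just the union bound in disguise, so the branching factor is being counted twice in one place and zero times in another. In addition, the genuinely technical content --- the one-step lemma showing that a variational-distance discrepancy $\delta$ at the child $x_{i}$ yields a discrepancy at most $\kappa\gamma\,\delta$ at $x_{i-1}$, where $\gamma$ controls crossing the edge into the finite volume and $\kappa$ controls the effect of fixing the parent spin on the child's marginal --- is exactly the step you defer, so what you have is an outline rather than a proof. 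One thing you do get right: the target you aim at, namely $\max_{\tau,\tau'}\|\mu^{\tau}_{\mathcal T_d}-\mu^{\tau'}_{\mathcal T_d}\|_{\rho}\to 0$, is a correct (indeed stronger than necessary) sufficient condition for non-reconstruction, and it is what the hypothesis $k\kappa\gamma<1$ actually delivers.
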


   In our case each TISGM $\mu$ corresponds to a solution $(u,v,w)$ of the system of equations~\eqref{tej}, therefore, we can write $\gamma(\mu)=\gamma(u,v,w)$ and $\kappa(\mu)=\kappa(u,v,w)$.

   It is easy to see that the channel ${\mathbb P}$ (see (\ref{PT})) corresponding to a TISGM of the Ising-Potts  model is ergodic and permissive. Thus the criterion of {\it extremality} of a TISGM is $k\kappa\gamma<1$.

   Note that $\kappa$ has the particularly simple form (see~\cite{MSW})
   \begin{equation}\label{ka}
   	\kappa={1\over 2}\max_{i,j}\sum_l|P_{il}-P_{jl}|
   \end{equation}
   and $\gamma$ is a constant that does not have a clean general formula, but it can be estimated.

  Consider the matrix ${\mathbb P}^{\prime}$, given by (\ref{ET}) (which depends on $w$), and denote by
  	$\mu=\mu(\theta,w)$ the corresponding Gibbs measure. Then, for any subset $A\subset {\mathcal T}$, (where $\mathcal T$ is complete subtree of $\Gamma^k$)
  	any boundary configuration $\eta$, any pair of spins
  	$h_1, h_2\in I\times \Phi$, any site $y\in \partial A$, and any neighbor $x\in A$ of $y$, we have
  	$$\|\mu^{\eta^{y,h_1}}_A-\mu^{\eta^{y,h_2}}_A\|_x\leq \max_h |p^{h_1}(h)-p^{h_2}(h)|,$$
  	with $p^{t}(h)=\mu^{\eta^{y,t}}_A((s(x), \sigma(x))=h)$.

  Denote by $p(h)=\mu^{\eta^{y,\,free}}_A((s(x),\sigma(x))=h)$ the Gibbs measure with boundary
  	condition $\eta$, except that the spin at $y$ is free (or equivalently, the edge connecting $x$ to $y$ is erased).
  	  	
   Let $t=(\kappa, i), h=(\varepsilon, j)\in I\times \Phi$, using definition of Gibbs measure and formula (\ref{epe2}) we get
  	\begin{equation}\label{pth} p^{t}(h)=
  			p^{(\kappa, i)}(\epsilon, j)={\theta^{\kappa\epsilon\delta_{i j}} z_{\epsilon,j}p(\epsilon,j)\over \sum_{\hat\epsilon\in \{-1,1\}}\sum_{r=1}^q\theta^{\kappa\hat\epsilon\delta_{i r}} z_{\hat\epsilon,r}p(\hat\epsilon,r)}.
  	\end{equation}
  	
  	For coordinates of vector $(p(h), h\in I\times \Phi)$ we use new notations
  	$$p_i=p(-1,i), \ \ q_i=p(1,i), \ \ b_i=\sum_{{r=1\atop r\ne i}}^q q_r.$$
  	Then
  	$$\sum_{{r=1\atop r\ne i}}^q p_r=1-p_i-q_i-b_i.$$
  	Using these notations, for the case $z_{-1,i}=1$, $i=1,\dots,q$,  $z_{1,1}=1$, $z_{1, j}=w$, $j=2, \dots, q$, from (\ref{pth}) we get
  		\begin{equation}\label{px} p^{t}(h)=
  	p^{(\kappa, i)}(\epsilon, j)=\left\{\begin{array}{llll}
  		{\theta^{-\kappa\delta_{ij}} p_j\over 1+(\theta^{-\kappa}-1) p_1+(\theta^{\kappa}-1)q_1+(w-1)b_1}, \ \  \epsilon=-1, i=1, \\[2mm]
  		{\theta^{-\kappa\delta_{ij}} p_j\over 1+(\theta^{-\kappa}-1) p_i+(\theta^{\kappa}w-1)q_i+(w-1)(b_i-q_1)}, \ \  \epsilon=-1, i\ne 1, \\[2mm]
  		{\theta^{\kappa\delta_{ij}} q_j\over 1+(\theta^{\kappa}-1) p_1+(\theta^{-\kappa}-1)q_1+(w-1)b_1}, \ \  \epsilon=1, i=1, \\[2mm]
  	{\theta^{\kappa\delta_{ij}}w q_j\over 1+(\theta^{\kappa}-1) p_i+(\theta^{-\kappa}w-1)q_i+(w-1)(b_i-q_1)}, \ \  \epsilon=1, i\ne 1.
    		\end{array}\right.
  	\end{equation}
 To estimate $\gamma$, it is necessary to evaluate the expression
  $|p^{(\kappa, i)}(\epsilon, j)-p^{(\kappa^{\prime}, \ell)}(\epsilon, j)|$, where $\kappa, \kappa^{\prime}\in\{-1, 1\}$, $i,j,\ell\in\{1,\dots,q\}$.

  {\bf Extremality of $\mu_1$.}
 In case $w=1$ we estimate $\gamma$. To do this we consider
  $ g(x, y, z, t, a, b) $  defined as:
$$g(x, y, z, t, a, b) := \left| \frac{a x}{z + t + a x + a^{-1} y} - \frac{b x}{x + y + b z + b^{-1} t} \right|,$$
where $ x, y, z, t \geq 0 $ and $ a = b^i > 0 $ with $ i \in \{-1, 1\} $.\\
We seek to study the supremum (norm) of the function $ g(x, y, z, t, a, b) $ with respect to the variables $ x, y, z, t $, denoted by:
$$\| g \| := \sup_{x, y, z, t \geq 0} g(x, y, z, t, a, b).$$
The following lemma provides an analysis of the norm of $ g(x, y, z, t, a, b) $ for different ranges of the parameter $ a $.

Define
\begin{equation}\label{pc}
	P(c) := -c^6 - 2c^5 - c^4 + 2c^2 + 2c + 1.
\end{equation}

By Descartes' rule of signs for polynomials, $ P(c) $ has at most one positive solution. Given that $ P(0) > 0 $ and $ P(2) < 0 $, we conclude that there is exactly one positive solution, denoted by $ a_{cr} $.

The next lemma states the following:
\begin{lemma}\label{newlemma} Under the assumptions stated above, the following results hold: \begin{enumerate}
\item For $0 < a \leq a_{cr}$, the norm of the function $ g(x, y, z, t, a, b) $ is
$$\| g \| = \frac{|a \sqrt{a} - 1| \cdot (a + \sqrt{a} + 1)}{a (a + 1)(\sqrt{a} + 1)}.$$
\item For $ a > a_{cr} $, the norm of the function $ g(x, y, z, t, a, b) $ is $\| g \| = a - 1.$
\end{enumerate}
\end{lemma}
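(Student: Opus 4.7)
The plan is to handle the two subcases $b=a$ and $b=a^{-1}$ (coming from $i=\pm 1$) separately, and then to take the larger of the two suprema; the claimed piecewise answer is precisely this maximum. Homogeneity of $g$ of degree zero in $(x,y,z,t)$ — all numerators and every term of both denominators are linear — lets me normalize $x=1$ and optimize over $(y,z,t)\in[0,\infty)^3$. I assume $a>1$; the case $a<1$ follows by the involution $a\leftrightarrow a^{-1}$ under which both formulas in the statement are invariant. In the case $b=a$, a direct computation yields $B-A=(1-a)[(x-z)+(t-y)/a]$, so $g=ax|1-a|\,|(x-z)+(t-y)/a|/(AB)$. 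Triangle inequality gives $|(x-z)+(t-y)/a|\le (x+z)+(y+t)/a$, and a term-by-term expansion of $AB$ shows $ax[(x+z)+(y+t)/a]\le AB$ (using $a+a^{-1}\ge 1$ and $a^2-a+1>0$). Hence $g\le |a-1|$, with equality at $y=z=t=0$, giving $\sup g=|a-1|$.

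In the case $b=a^{-1}$, expanding $axB-bxA$ and factoring out $(a-1)$ yields
\[
g=\frac{(a-1)x\bigl[a^2x+(a^2+a+1)y+az+a(a^2+a+1)t\bigr]}{a^2AB}.
\]
Setting $x=1$, I expect the supremum on the face $y=z=0$, reducing to a one-variable optimization in $t$. Logarithmic differentiation of
\[
(a-1)\bigl[a+(a^2+a+1)t\bigr]\big/\bigl[a(a+t)(1+at)\bigr]
\]
gives the critical equation $(a^2+a+1)t^2+2at-a=0$, whose unique positive root is $t^*=\sqrt{a}/(a+\sqrt{a}+1)$. Substituting $t^*$ and simplifying using $s=\sqrt{a}$ together with the factorization $a^2+a+1=(s^2+s+1)(s^2-s+1)$ produces
\[
\sup g=\frac{(s-1)(s^2+s+1)^2}{s^2(s^2+1)(s+1)}=\frac{|a\sqrt{a}-1|\,(a+\sqrt{a}+1)}{a(a+1)(\sqrt{a}+1)}.
\]

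The overall $\|g\|$ is the maximum of the two case suprema. Clearing denominators and writing $s=\sqrt{a}$, the inequality $|a-1|\ge \sup g|_{b=a^{-1}}$ becomes $s^2(s+1)^2(s^2+1)\ge(s^2+s+1)^2$, and the difference of the two sides is exactly $-P(s)$, with $P$ as in~\eqref{pc}. Hence the crossover is precisely $P(\sqrt{a})=0$, matching the critical value $a_{cr}$ of the statement, and the piecewise formula follows. The principal difficulty lies in the $b=a^{-1}$ case: rigorously showing that the supremum is attained on the face $y=z=0$ and not at an interior point. A direct Lagrange-multiplier attack produces a quartic in $z$ (of the same flavor as~\eqref{4e}) with spurious roots that must be discarded. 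I would instead establish the pointwise monotonicities $\partial_y g\le 0$ and $\partial_z g\le 0$ on $\{y,z\ge 0\}$ via polynomial inequalities in $(a,t)$ coming from the numerators of these partial derivatives, which are tractable via Descartes' rule of signs.
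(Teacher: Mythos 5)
Your route is the same as the paper's (split into $b=a$ and $b=a^{-1}$, normalize by $x$, locate the interior critical point, compare the two suprema via the sign of the polynomial $P$ at $\sqrt a$), and the computations you actually carry out are correct: the bound $a-1$ in the case $b=a$, the critical point $t^*=\sqrt a/(a+\sqrt a+1)$, the value there, and the identity ``difference of the two sides $=-P(s)$'' all check out. But the step you yourself flag as the principal difficulty is a genuine gap, and the repair you propose would fail. In the case $b=a^{-1}$ (with $x=1$, $a>1$), substitute $f=1+a^{-1}z$ and $s=a^{-1}y+t$: the two denominators become exactly $af+s$ and $f+as$, so $g=\frac{a}{af+s}-\frac{1}{a(f+as)}=:Q(f,s)$ depends on $(y,t)$ only through $s$ and on $z$ only through $f$. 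Since $\partial_s Q(f,0)=f^{-2}\left(1-a^{-1}\right)>0$, the function is \emph{increasing} in $s$ near $s=0$, hence $\partial_y g>0$ there; concretely, at $y=z=t=0$ one gets $g=(a-1)/a$, which is strictly below the claimed supremum. So the pointwise monotonicity $\partial_y g\le 0$ that you propose to establish is simply false, and no Descartes-type argument will produce it. The correct justification of the restriction to the face --- and this is exactly what the paper does --- is the change of variables above: it collapses the problem to two variables on $[1,\infty)\times[0,\infty)$, the optimization in $s$ yields $s^*=f\sqrt a/(a+\sqrt a+1)$, and the optimal value is proportional to $1/f$, hence maximized at $f=1$, i.e.\ $z=0$, with $y=0$ attainable because only the combination $a^{-1}y+t$ matters.

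A second error is the claim that the case $a<1$ ``follows by the involution $a\leftrightarrow a^{-1}$ under which both formulas in the statement are invariant.'' Neither half of that is true: the expression $\frac{|a\sqrt a-1|(a+\sqrt a+1)}{a(a+1)(\sqrt a+1)}$ is not invariant under $a\mapsto 1/a$ (at $a=4$ it equals $49/60$, at $a=1/4$ it equals $49/15$), and $g$ itself carries no such symmetry because the two denominators $z+t+ax+a^{-1}y$ and $x+y+bz+b^{-1}t$ are structurally different. Since the lemma is applied with $a=\theta^{\pm1}$ and $\theta>1$, the regime $a<1$ genuinely occurs and must be treated by a separate (if analogous) computation, as the paper does. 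Finally, you correctly read $\|g\|$ as the maximum over the two admissible choices of $b$, which matches how the lemma is used, but this convention should be stated rather than assumed.
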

\begin{proof}
We focus on the case where $x > 0$. To facilitate analysis, let us introduce the normalized variables
 $\frac{y}{x}=y_1, \ \frac{z}{x}=z_1, \ \frac{t}{x}=t_1.$
Substituting these into $g(x, y, z, t, a, b)$, we obtain the expression
\begin{equation}\label{eq0}
 \left|\frac{a}{a+a^{-1} y_1+z_1+t_1}-\frac{b}{1+y_1+b z_1+b^{-1} t_1}\right|.
\end{equation}
This reformulated expression, denoted as $\eqref{eq0}$, provides a more compact representation for further analysis under the assumption $x > 0$.\\
\textbf{Case $a=b$.} When $a = b$, the expression in $\eqref{eq0}$ simplifies to
$$\left|\frac{a}{a + t_1+ a^{-1} y_1+z_1} - \frac{a}{1 + a^{-1} t_1+ y_1 + a z_1}\right|.$$
To proceed, let us introduce the substitutions:
$$ 1 + a^{-1} t_1 =f \quad  a^{-1} y_1+z_1 = s.$$
Under these substitutions, the problem reduces to finding the maximum value of the function
$$\left| \frac{a}{a f + s} - \frac{a}{f + a s} \right|.$$
Simplifying further, we have:
\begin{equation}\label{eq1} \left| \frac{a}{a f + s} - \frac{a}{f + a s} \right|=\left|\frac{a(f + a s) - a(a f + s)}{(a f + s)(f + a s)}\right|
= \left|\frac{(a - a^2)(f - s)}{(a f + s)(f + a s)}\right|.
\end{equation}
The expression $\eqref{eq1}$ captures the reformulated problem, which is now expressed in terms of $f$ and $s$. The task is to determine the maximum value of this function under the given constraints.\medskip
Let us first consider the case $a>1$. Under this assumption, equation (\ref{eq1}) can be expressed as
$$h(f,s):=\frac{(a^2 - a)|f - s|}{(a f + s)(f + a s)}.$$
Noting that $ h(f, s) = h(f, s) $ for all $ f, s \geq 1 $, it suffices to analyze the case where $f \geq s$. In this scenario, the function $h(f, s) $ simplifies to
$$h(f,s):=\frac{(a^2 - a)(f - s)}{(a f + s)(f + a s)}.$$
Given the conditions $ f = 1 + a^{-1} t_1 $ and $s= a^{-1} y_1 + z_1 $, we observe that $f \geq 1 $ and $s \geq 0 $. Consequently, it follows that $ a(f - s) \leq a f + s $ and $f + a s \geq 1 $. Hence, we derive the inequality
$$a(f-s) \leq(af+s)(f+a s).$$
Since $ a \geq 1 $, it further implies that:
$$h(f,s) = \frac{(a^2 - a)(f - s)}{(a f + s)(f + a s)}\leq a-1.$$
From the inequality above, it can be concluded that the maximum value of $ h(f, s) $ is attained at the point $ (f, s) = (1, 0) $.\medskip

For the case  $a<1$. The expression in equation (\ref{eq1}) simplifies to:
$$\left| \frac{a}{af + s} - \frac{a}{f + as} \right|= \frac{(a - a^2)|f - s|}{(a f + s)(f + a s)}.$$
Due to symmetry, it is sufficient to assume $ f \geq s $. Under this condition, the final expression can be written as:
$$\left| \frac{a}{a f + s} - \frac{a}{f + a s} \right|= \frac{(a - a^2)(f- s)}{(a f + s)(f + a s)}.$$
Proceeding as in the previous derivation, we obtain
$$\frac{(a - a^2)(f -s)}{(af + s)(f + as)} \leq 1 - a.$$
Consequently, the inequality reduces to $$\left| \frac{a}{af + s} - \frac{a}{f + a s} \right| \leq 1 - a.$$\medskip
\textbf{Case $b = a^{-1}$.} In this case, equation (\ref{eq0}) can be rewritten as:
$$
\left|\frac{a}{a + a^{-1} y_1 + z_1 + t_1} - \frac{a^{-1}}{1 + y_1 + a^{-1} z_1 + a t_1}\right|.
$$
Let us introduce the substitutions:
$$
1 + a^{-1} z_1 = f, \quad a^{-1} y_1 + t_1 = s.
$$
Under these substitutions, we aim to determine the maximum value of the following function:
\begin{equation}\label{eq2}
\left| \frac{a}{a f + s} - \frac{a^{-1}}{f + a s} \right|=\frac{|a - 1|(af + s(a^2 + a + 1))}{a(a f + s)(f + a s)}.
\end{equation}
First, we consider the case $a > 1$. Then, the function simplifies as follows:
$$
\left| \frac{a}{a f + s} - \frac{a^{-1}}{f + a s} \right| = \frac{a}{a f + s} - \frac{1}{a(f + a s)} = Q(f, s).
$$
Using the inequalities $(Q(f, s))_s^{\prime} \leq 0$ and $(Q(f, s))_s^{\prime} \geq 0$, we derive the following constraints:
$$
 s \geq \frac{f \sqrt{a}}{a + \sqrt{a} + 1}, \quad s \leq \frac{f \sqrt{a}}{a + \sqrt{a} + 1}.
$$
As a result, we conclude:
$$
Q(f,s) \leq Q\left(u, \frac{f \sqrt{a}}{a + \sqrt{a} + 1}\right).
$$
Since $f \geq 1$, it follows that:
$$
Q\left(f, \frac{f \sqrt{a}}{a + \sqrt{a} + 1}\right) =
\frac{(a \sqrt{a} - 1)(a + \sqrt{a} + 1)}{a (a + 1)(\sqrt{a} + 1) f} \leq \frac{(a \sqrt{a} - 1)(a + \sqrt{a} + 1)}{a (a + 1)(\sqrt{a} + 1)}.
$$
Thus,
$$
Q(f,s) \leq \frac{(a \sqrt{a} - 1)(a + \sqrt{a} + 1)}{a (a + 1)(\sqrt{a} + 1)}.
$$
The case $a \leq 1$ proceeds analogously to the case $a > 1$, yielding:
$$
Q(f,s) \leq \frac{(1 - a \sqrt{a})(a + \sqrt{a} + 1)}{a (a + 1)(\sqrt{a} + 1)}.
$$
Now, consider the following function:
$$
h(a) := \frac{|1 - a \sqrt{a}|(a + \sqrt{a} + 1)}{a(a + 1)(\sqrt{a} + 1)} - |1 - a|.
$$
We seek to examine the sign of the function $ h(a) $. To facilitate this, we write the terms over a common denominator, $ a(a + 1)(\sqrt{a} + 1) $, yielding:
$$
\frac{|1 - a \sqrt{a}|(a + \sqrt{a} + 1) - |1 - a| a(a + 1)(\sqrt{a} + 1)}{a(a + 1)(\sqrt{a} + 1)}.
$$
Since $ a(a + 1)(\sqrt{a} + 1) > 0 $, determining the sign of $ h(a) $ is equivalent to checking the sign of the following expression:
$$
|1 - \sqrt{a}| \left[ (a + \sqrt{a} + 1)^2 - a(a + 1)(\sqrt{a} + 1)^2 \right].
$$
Expanding the expression inside the brackets results in:
$$
(a + \sqrt{a} + 1)^2 - a(a + 1)(\sqrt{a} + 1)^2 = -a^2 + 2a + 2\sqrt{a} + 1 - a^3 - 2a^{5/2}.
$$
Let $ c = \sqrt{a} $, then the expression can be rewritten as (\ref{pc}).

Thus, $ h(a) $ is positive (resp. negative) when $ 0 < a \leq a_{cr} $ (resp. $ a \geq a_{cr} $).
Finally, it is straightforward to check that if $ a \in (0, a_{cr}] $, then
$$
\| g \| = g\left( \frac{a + \sqrt{a} + 1}{(a + 1)(\sqrt{a} + 1)}, \frac{a \sqrt{a}}{(a + 1)(\sqrt{a} + 1)}, 0, 0, a, a^{-1} \right),
$$
and if $ a \geq a_{cr} $, then $\| g \| = g(1, 0, 0, 0, a, a).$
\end{proof}
Denote
$$g_{1}(x, y, z, t, a, b):= \left| \frac{a x}{z + t + a x + a^{-1} y} - \frac{x}{x + y + b z + b^{-1} t} \right|,$$
where $ x, y, z, t \geq 0 $ and $ a = b^i > 0 $ with $ i \in \{-1, 1\} $.\\
\begin{lemma}\label{newlemma1} Under the assumptions stated above the norm of the function $g_1(x, y, z, t, a, b) $ is
$$\| g_1 \| = \frac{|a-1|}{a + 1}.$$
\end{lemma}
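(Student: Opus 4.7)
The plan closely mirrors the analysis of Lemma~\ref{newlemma}, but is substantially simpler because the two numerators in $g_1$ are $ax$ and $x$ (rather than $ax$ and $bx$). Multiplying the second fraction by $a/a$ produces a common numerator $a$, so the problem collapses to optimizing a ratio of the form $(\text{const})\cdot s/(D_1 D_2)$ in which $D_2 - D_1$ is a linear expression. In particular, no auxiliary polynomial inequality (no analogue of the $P(c)$ analysis from~(\ref{pc})) will be needed.

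If $x = 0$ then $g_1 = 0$, so I may assume $x > 0$ and normalize via $y_1 = y/x$, $z_1 = z/x$, $t_1 = t/x$. I then split into the cases $a = b$ and $b = a^{-1}$; in each case the strategy is to multiply the second fraction by $a/a$, identify a pair of variables that collapse into a single effective parameter, and reduce to a scalar optimization.

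For $a = b$: setting $f = a + t_1 \geq a$ and $s = a^{-1}y_1 + z_1 \geq 0$ rewrites the two denominators as $f + s$ and $f + a^2 s$, giving
\begin{equation*}
g_1 \;=\; \frac{a\,|a^2 - 1|\, s}{(f + s)(f + a^2 s)}.
\end{equation*}
A short derivative in $s$ locates the critical point at $s = f/a$ with value $1/(f(a+1)^2)$; this decreases in $f$, so the bound is saturated at the boundary $f = a$ (that is, $t_1 = 0$), giving $\frac{|a - 1|}{a + 1}$. For $b = a^{-1}$: the same multiplication makes both denominators depend on $(y_1, t_1)$ only through the combination $p := y_1 + a t_1$, while $z_1$ appears additively and identically in both. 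The expression $g_1 = |a^2 - 1|\, p / (D_1 D_2)$ is monotone decreasing in $z_1$, so the supremum forces $z_1 = 0$; the remaining one-variable optimization has unique critical point $p = a$ and again yields $\frac{|a - 1|}{a + 1}$.

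Finally, equality is attained at $(x, y, z, t) = (1, a, 0, 0)$, where a direct computation gives $g_1 = |a/(a+1) - 1/(a+1)| = |a-1|/(a+1)$, confirming that the supremum is exactly $\frac{|a-1|}{a+1}$. I do not expect any serious obstacle: the whole argument reduces to spotting the correct pair of substitutions $(f,s)$ in the first case and $(p, z_1)$ in the second, after which each case is a routine one-variable calculus exercise.
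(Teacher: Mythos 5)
Your proposal is correct and follows essentially the same route as the paper: normalize by $x$, split into the cases $a=b$ and $b=a^{-1}$, group the remaining variables into two effective parameters so that $g_1$ becomes $|a^2-1|\,s$ divided by a product of two linear denominators, and then bound that ratio by $\frac{|a-1|}{a+1}$ (the paper does the last step via the algebraic chain $(a+1)^2 s\leq (af+s)(f+as)$ rather than a derivative, but this is only a cosmetic difference). Your explicit verification that the bound is attained at $(x,y,z,t)=(1,a,0,0)$ is a small improvement, since the paper leaves attainment implicit.
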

\begin{proof}
Substitute the normalized variables
 $\frac{y}{x}=y_1, \ \frac{z}{x}=z_1, \ \frac{t}{x}=t_1$ into $g_1(x, y, z, t, a, b)$, we obtain the expression
\begin{equation}\label{eq01}
 \left|\frac{a}{a+a^{-1} y_1+z_1+t_1}-\frac{1}{1+y_1+b z_1+b^{-1} t_1}\right|.
\end{equation}
\textbf{Case $a=b$.} Put
$$ 1 + a^{-1} t_1 =f, \quad  a^{-1} y_1+z_1 = s.$$
Under these substitutions, we have
$$(a+1)^2s\leq (a + s)(1 + a s)\leq (a f + s)(f + a s).$$
Simplifying further, we have:
\begin{equation}\label{eq1a} \left| \frac{a}{a f + s} - \frac{1}{f + a s} \right|=\frac{|1 - a^2|s}{(a f + s)(f + a s)}\leq \frac{|a-1|}{a+1}.
\end{equation}
Hence
$$\left|\frac{a}{a+a^{-1} y_1+z_1+t_1}-\frac{1}{1+y_1+b z_1+b^{-1} t_1}\right|\leq \frac{|a-1|}{a+1}.$$
\textbf{Case $b = a^{-1}$.} In this case, equation (\ref{eq01}) can be rewritten as:
$$
\left|\frac{a}{a + a^{-1} y_1 + z_1 + t_1} - \frac{1}{1 + y_1 + a^{-1} z_1 + a t_1}\right|.
$$
Let us introduce the substitutions:
$$
1 + a^{-1} z_1 = f, \quad a^{-1} y_1 + t_1 = s.
$$
Similar to the case $a=b$ we have \begin{equation}\label{eq2}
\left|\frac{a}{a + a^{-1} y_1 + z_1 + t_1} - \frac{1}{1 + y_1 + a^{-1} z_1 + a t_1}\right|\leq\frac{|a - 1|}{a+1}.
\end{equation}
\end{proof}

As previously mentioned, to estimate $\gamma$, it is necessary to evaluate the expression
  $|p^{(\kappa, i)}(\epsilon, j)-p^{(\kappa^{\prime}, \ell)}(\epsilon, j)|$.  When $w=1$ the last expression can be rewritten as
$$K(p_i, q_i, p_l, q_l, \theta, \kappa, \kappa^{\prime}):=\left|\frac{\theta^{-\kappa\delta_{ij}} p_{j}}{{1+(\theta^{-\kappa}-1) p_i+(\theta^{\kappa}-1)q_i}}-\frac{\theta^{-\kappa^{\prime}\delta_{jl}}p_j}{1+(\theta^{-\kappa^{\prime}}-1) p_l+(\theta^{\kappa^{\prime}}-1)q_l}\right|,$$
defined for non-negative variables $p_{i}$, $q_{i}$, $p_l$, and $q_l$ with  $p_i+q_i+p_l+q_l\leq 1$.
It can be verified that
 $$\max_{p_{i}+q_{i}+p_l+q_l \leq 1}K(p_i, q_i, p_l, q_l, \theta, \kappa, \kappa^{\prime})=\max_{p_{i}+q_{i}+p_l+q_l = 1}K(p_i, q_i, p_l, q_l, \theta, \kappa, \kappa^{\prime}).$$
Now, let $\theta^{\kappa} = a$ and $\theta^{\kappa^{\prime}} = b$. Given the constraint $p_{i} + q_{i} + p_l + q_l = 1$, the function $f(p_{i}, q_{i}, p_l, q_l, \theta, \kappa,\kappa^{\prime})$ simplifies to
 $$\left|\frac{a p_{i}}{z+t+a p_{i}+a^{-1} q_{i}}-\frac{b p_{i}}{p_{i}+q_{i}+bz+b^{-1}t}\right|.$$
Then by Lemma \ref{newlemma} one gets
$$\|K\|= \max \left\{|\theta-1|, \frac{|\theta-1|}{\theta+1}, |\theta^{-1}-1|, \frac{|1 - \theta \sqrt{\theta}|(\theta + \sqrt{\theta} + 1)}{\theta (\theta + 1)(\sqrt{\theta} + 1)}\right\}.$$
Namely,
$$\|K\|= \left\{
              \begin{array}{ll}
                \frac{|1 - \theta \sqrt{\theta}|(\theta + \sqrt{\theta} + 1)}{\theta (\theta + 1)(\sqrt{\theta} + 1)}, & \hbox{$0<\theta\leq a_{cr}$;} \\[2mm]
                |\theta-1|, & \hbox{$a_{cr}<\theta.$}
              \end{array}
            \right.$$
From the above, it is of interest to consider the case $\theta > a_{cr}$. From (\ref{px}), for $w=1$, we obtain
$$\gamma \leq \max |p^{(\kappa, i)}(\epsilon, j) - p^{(a, \ell)}(\epsilon, j)| = \theta - 1.$$
Using formula (\ref{ka}) for the matrix in (\ref{ET}) at $ w = 1 $, we have
\begin{equation}\label{kw}
	\kappa={\theta-\theta^{-1}\over \theta+\theta^{-1}+2(q-1)}.
\end{equation}
Thus extremality condition of the measure $\mu_1$ corresponding to $u=v=w=1$ is
 \begin{equation}\label{mu0}
 	2\left({(\theta-\theta^{-1})(\theta-1)\over \theta+\theta^{-1}+2(q-1)}\right)<1.
 	\end{equation}
 After simplifying the inequality
 $$2\theta^3-3\theta^2-2q\theta+1<0.$$
 It is easy to see that the polynomial $2\theta^3-3\theta^2-2q\theta+1$ has exactly two positive roots, say $\tilde\theta_1(q), \tilde\theta_2(q)$ with $\tilde\theta_1(q)<\tilde\theta_2(q)$ and  the last inequality is satisfied for all
 \begin{equation}\label{biee} \theta\in(\tilde\theta_1(q), \tilde\theta_2(q)).
 \end{equation}
Now using Result 8, we can conclude the following theorem.

\begin{thm}\label{tf} For each $q\geq 2$ the measure $\mu_1$ is
	$$\begin{array}{ll}
	extreme, \ \ \mbox{if} \ \ \theta\in(\tilde\theta_1(q), \tilde\theta_2(q))\\[2mm]
	non-extreme, \ \ \mbox{if} \ \ \theta\in \left(0, {1\over \theta_1(q)}\right)\cup
	\left(\theta_1(q), +\infty\right).
	\end{array} $$
\end{thm}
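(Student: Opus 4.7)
The proof proposal splits into the two halves of the dichotomy, and essentially amounts to collecting what has already been established in this subsection and the previous one.

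For the non-extremality half, I would simply invoke Result 8. Recall that Result 8 was obtained by applying the Kesten--Stigum sufficient condition $k\lambda_2^2 > 1$ to the transition matrix $\mathbb P_1$ at $u=v=w=1$, where the second-largest eigenvalue $\lambda_2(\theta) = (\theta^2-1)/(\theta^2+2(q-1)\theta+1)$ was computed explicitly. Since the paper works with $k=2$ here, the inequality (\ref{qq}) is equivalent (after splitting into $\theta\le 1$ and $\theta>1$) to a quadratic in $\theta$ whose positive root is $\theta_1(q)$, and the set of solutions is precisely $(0,1/\theta_1(q))\cup(\theta_1(q),+\infty)$, giving (\ref{bire}). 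So the non-extremality claim is immediate.

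For the extremality half, the strategy is to apply the MSW bound $k\kappa\gamma<1$ of Theorem \ref{tmw} with $k=2$. The quantity $\kappa$ has already been evaluated at $u=v=w=1$ in (\ref{kw}), yielding $\kappa = (\theta-\theta^{-1})/(\theta+\theta^{-1}+2(q-1))$. The main task is therefore to bound $\gamma$. Using formula (\ref{px}) specialized to $w=1$, the relevant difference $|p^{(\kappa,i)}(\epsilon,j) - p^{(\kappa',\ell)}(\epsilon,j)|$ reduces, after the constraint $p_i+q_i+p_\ell+q_\ell=1$ is exploited, to the function $g(x,y,z,t,a,b)$ studied in Lemma \ref{newlemma} (with $a=\theta^{\kappa}$, $b=\theta^{\kappa'}$). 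By Lemma \ref{newlemma} the relevant supremum is $\theta-1$ in the regime $\theta>a_{cr}$ (and dominated by the same quantity in the remaining regime by comparing to the other candidates $|\theta-1|/(\theta+1)$, $|\theta^{-1}-1|$ and the $(a\sqrt a - 1)$-expression). This yields the uniform bound $\gamma \le \theta-1$.

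Plugging these into $k\kappa\gamma<1$ with $k=2$ gives exactly the inequality (\ref{mu0}), which after clearing the denominator becomes the cubic $2\theta^3-3\theta^2-2q\theta+1<0$. It remains to verify that this cubic $p(\theta):=2\theta^3-3\theta^2-2q\theta+1$ has exactly two positive roots $\tilde\theta_1(q)<\tilde\theta_2(q)$: $p(0)=1>0$, $p(1)=-2q<0$, and $p(\theta)\to+\infty$ as $\theta\to+\infty$, so by the intermediate value theorem there are at least two positive roots, and by Descartes' rule of signs (two sign changes in $2,-3,-2q,1$) at most two. Hence the extremality condition is precisely $\theta\in(\tilde\theta_1(q),\tilde\theta_2(q))$, as claimed.

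The main obstacle in the argument is not the non-extremality part, nor the cubic analysis, but rather establishing the uniform bound $\gamma\le\theta-1$: this requires the careful case analysis of $g(x,y,z,t,a,b)$ carried out in Lemma \ref{newlemma}, together with the observation that among all four candidate suprema ($|\theta-1|$, $|\theta-1|/(\theta+1)$, $|\theta^{-1}-1|$ and the $a_{cr}$-branch), the maximum in the relevant regime $\theta>a_{cr}$ is $|\theta-1|$. Once this bound is in hand, the rest is routine algebraic manipulation, and Theorem \ref{tf} follows by combining the two halves.
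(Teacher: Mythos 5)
Your proposal is correct and follows essentially the same route as the paper: non-extremality via Result~8 (Kesten--Stigum with the explicit $\lambda_2$), and extremality via Theorem~\ref{tmw} with $\kappa$ from (\ref{kw}) and the bound $\gamma\le\theta-1$ from Lemma~\ref{newlemma}, leading to the cubic $2\theta^3-3\theta^2-2q\theta+1<0$. Your added Descartes/intermediate-value argument for the cubic having exactly two positive roots is a welcome explicit justification of a step the paper merely asserts.
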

\begin{rk} It is easy to verify that the left-hand side of (\ref{mu0}) is greater than the left-hand side of (\ref{qq}). Therefore, we have
	 $$  (\tilde\theta_1(q), \tilde\theta_2(q))\subset
\left({1\over \theta_1(q)}, \theta_1(q)\right).$$
Thus extremality question of $\mu_1$ remains open in the region
\begin{equation}\label{open} \left[{1\over \theta_1(q)}, \tilde\theta_1(q)\right]\cup \left[\tilde\theta_2(q), \theta_1(q)\right].
\end{equation}
For example, in case $q=5$ we have  (\ref{open}) as approximately
$$(0.051, 0.097)\cup (3.07, 19.61).$$

\end{rk}
 \begin{rk} The measure $\mu_1$ is called a free (or unordered) measure. Theorem \ref{tf}  shows that the measure $\mu_1$ is extreme in bounded region of $\theta$, but for the case of Ising and Potts models free measures are extreme on an unbounded region.
\end{rk}

\textbf{Extremality of $\mu_*$ and $\mu^*$.} We proceed by estimating $\gamma$ for $ w \neq 1 $. To do so, we begin by recalling the relevant notation: $ z_{-1,i} = 1 $ for $ i = 1, \dots, q $, $ z_{1,1} = 1 $, and $ z_{1,j} = w $ for $ j = 2, \dots, q $.

Using equation (\ref{pth}), we derive equation (\ref{px}), and we need to estimate $ \gamma $ by evaluating the expression
$$
|p^{(\kappa, i)}(\epsilon, j) - p^{(\kappa', \ell)}(\epsilon, j)|,
$$
where $ \kappa, \kappa' , \epsilon \in \{-1, 1\} $, and $ i, j, \ell \in \{1, \dots, q\} $.

\textbf{Case $ i = j = \ell = 1 $.} In this case, from equation (\ref{px}), the expression $ |p^{(\kappa, 1)}(\epsilon, 1) - p^{(\kappa', 1)}(\epsilon, 1)| $ can be written as
$$
\left|\frac{\theta^{-\kappa} p_1}{1 + \left( \theta^{-\kappa} - 1 \right) p_1 + \left( \theta^\kappa - 1 \right) q_1 + (w - 1) b_1} - \frac{\theta^{-\kappa'} p_1}{1 + \left( \theta^{-\kappa'} - 1 \right) p_1 + \left( \theta^{\kappa'} - 1 \right) q_1 + (w - 1) b_1} \right|.
$$
Since $ \theta > \theta_{{\rm c}, 0} $ (as per Result 1), we now proceed with an equivalent expression to estimate this:
$$
\left|\frac{\theta p_1}{1 + \left( \theta - 1 \right) p_1 + \left( \theta^{-1} - 1 \right) q_1 + (w - 1) b_1} - \frac{\theta^{-1} p_1}{1 + \left( \theta^{-1} - 1 \right) p_1 + \left( \theta - 1 \right) q_1 + (w - 1) b_1} \right|.
$$
Using the condition $ 1 - p_1 - q_1 - b_1 = \sum_{r \neq 1} p_r \geq 0 $, it suffices to find the maximum value of the above expression when $ p_1 - q_1 - b_1 = 1 $. This leads to the expression
$$
\left|\frac{\theta p_1}{\theta p_1 + \theta^{-1} q_1 + w b_1} - \frac{\theta^{-1} p_1}{\theta^{-1} p_1 + \theta q_1 + w b_1}\right|.
$$
Substituting $ x = \frac{q_1}{p_1} $ and $ y = \frac{b_1}{p_1} $, the expression simplifies to
$$
\left|\frac{\theta}{\theta + \theta^{-1} x + w y} - \frac{\theta^{-1}}{\theta^{-1} + \theta x + w y}\right| = \frac{(\theta^2 - 1) \left( (\theta^2 + 1) x + \theta w y \right)}{(\theta^2 + x + \theta w y)(1 + \theta^2 x + \theta w y)}.
$$
We now aim to show that
$$
\frac{(\theta^2 - 1) \left( (\theta^2 + 1) x + \theta w y \right)}{(\theta^2 + x + \theta w y)(1 + \theta^2 x + \theta w y)} \leq \frac{\theta^2 - 1}{\theta^2 + 1}.$$
Namely, we need to show that
$$\left( (\theta^2 + 1)x + \theta wy \right) (\theta^2 + 1) \leq \left( \theta^2 + x + \theta w y \right) \left( 1 + \theta^2 x + \theta w y \right).$$
After performing the necessary algebraic simplifications, the above inequality is equivalent to the following expression:
$$w^2 \theta^2 y^2 + w \theta^3 x y + w \theta x y + \theta^2 (x - 1)^2 \geq 0.$$
Thus, for the case where $ i = j = \ell = 1 $, we have established that
\begin{equation}
|p^{(\kappa, i)}(\epsilon, j) - p^{(\kappa', \ell)}(\epsilon, j)| \leq \frac{\theta^2 - 1}{\theta^2 + 1}.
\end{equation} \medskip

\textbf{Case $i=1$, $\ell=j\neq 1$.} Starting from equation (\ref{px}), the expression for the difference
\[
\left| p^{(\kappa, i)}(\epsilon, j) - p^{(\kappa', \ell)}(\epsilon, j) \right|
\]
can be written as
\[
\left| \frac{p_j}{1 + \left( \theta^{-\kappa} - 1 \right) p_1 + \left( \theta^\kappa - 1 \right) q_1 + (w - 1) b_1} - \frac{\theta^{-\kappa'} p_j}{1 + \left( \theta^{-\kappa'} - 1 \right) p_j + \left( \theta^{\kappa'} - 1 \right) q_j + (w - 1) b_j} \right|.
\]
Introducing the notations $a = \theta^\kappa \quad \text{and} \quad b = \theta^{\kappa'},$
we can rewrite the above expression as
\[
\left| \frac{p_j}{a\,p_1 + p_j + a\,q_1 + w\,q_j} - \frac{b\,p_j}{p_1 + b\,p_j + b^{-1}w\,q_j + q_1} \right|.
\]
Next, by defining the normalized variables
\[
x = \frac{p_1 + q_1}{p_j} \quad \text{and} \quad y = \frac{q_j}{p_j},
\]
the expression reduces to
\[
S(x,y) := \left| \frac{1}{1+ax+wy} - \frac{b}{b+x+b^{-1}wy} \right|.
\]
We now consider two cases. The first case is $a = b$. In this situation, by using the fact that
\[
S(x,y) < S\Bigl(1-\frac{w\,y}{a},y\Bigr)
\]
and under the assumption that $\theta > \theta_{c,0}$, one obtains the estimate
\begin{equation}\label{es1}
\frac{|1-a^2|(x+a^{-1}y)}{(1+ax+wy)(a+x+a^{-1}wy)} \le \frac{|a-1|}{a+1} = \frac{\theta-1}{\theta+1}.
\end{equation}
Now we consider the second case: $a = b^{-1}$. In this case, we have
\[
\left| \frac{1}{1+ax+wy} - \frac{a^{-1}}{a^{-1}+x+awy} \right| = \frac{|a^2-1|\,w\,y}{(1+ax+wy)(1+ax+a^2wy)} \le \frac{|a^2-1|\,w\,y}{(1+wy)(1+a^2wy)}.
\]
By optimizing over the single variable (and using the fact that $\theta > \theta_{c,0}$), one obtains
\begin{equation}\label{es2}
\frac{|a^2-1|\,w\,y}{(1+wy)(1+a^2wy)} \le \frac{|a^2-1|}{(\sqrt{a}+1)(\sqrt{a^3}+1)} = \frac{\theta^2-1}{(\sqrt{\theta}+1)(\sqrt{\theta^3}+1)}.
\end{equation}

Combining the estimates (\ref{es1}) and (\ref{es2}), we conclude that for the case $i=\ell=1$ and $j\neq 1$ the following bound holds:
\begin{equation}\label{es3}
\left| p^{(\kappa, 1)}(\epsilon, j) - p^{(\kappa', \ell)}(\epsilon, j) \right| \le \max \left\{ \frac{\theta^2-1}{(\sqrt{\theta}+1)(\sqrt{\theta^3}+1)}, \frac{\theta-1}{\theta+1} \right\} = \frac{\theta-1}{\theta+1}.
\end{equation}\medskip

\textbf{Case $i = j = 1$, $\ell \neq 1$.} Starting from equation (\ref{px}), we express the difference between the two relevant probability expressions as
\[
\left|\frac{\theta^{-\kappa} p_1}{1 + (\theta^{-\kappa} - 1)p_1 + (\theta^{\kappa} - 1)q_1 + (w - 1)b_1} - \frac{p_1}{1 + (\theta^{-\kappa'} - 1)p_{\ell} + (\theta^{\kappa'}w - 1)q_{\ell} + (w - 1)(b_{\ell} - q_1)}\right|.
\]
To estimate this difference, it suffices to derive an upper bound for the function
\[
\left|\frac{a\,p_1}{a\,p_1 + a^{-1}q_1 + w\,q_{\ell} + p_{\ell}} - \frac{p_1}{q_1 + b\,p_{\ell} + b^{-1}w\,q_{\ell} + p_1}\right|,
\]
where we have introduced the notations $a = \theta^{-\kappa} \quad \text{and} \quad b = \theta^{-\kappa'}.$
Assume first that $a = b$. In order to simplify the expression further, we define the auxiliary variables
\[
x = a + w\,\frac{q_{\ell}}{p_1} \quad \text{and} \quad y = \frac{q_1 + a\,p_{\ell}}{p_1}.
\]
With these substitutions, the expression reduces to
\[
\left|\frac{a}{x + a^{-1}y} - \frac{1}{a^{-1}x + y}\right|.
\]
A detailed estimation yields
\begin{equation}\label{es4}
\left|\frac{a}{x + a^{-1}y} - \frac{1}{a^{-1}x + y}\right| \le \left|\frac{a}{a + a^{-1}y} - \frac{1}{1 + y}\right| \le \frac{|a - 1|}{a + 1} = \frac{\theta - 1}{\theta + 1}.
\end{equation}
If, alternatively, $a = b^{-1}$, then by employing the substitutions
\[
x = a + w\,\frac{p_{\ell}}{p_1} \quad \text{and} \quad y = \frac{q_1 + a\,w\,q_{\ell}}{p_1},
\]
the resulting expression can be transformed into the same form as in (\ref{es4}). Consequently, in both cases we obtain the estimation stated in (\ref{es3}) for the situation when $i = j = 1$ and $\ell \neq 1$.\medskip

\textbf{Case $i = 1$, $j = \ell > 1$.} In this case, according to equation (\ref{px}), the difference between the two probability expressions is given by
\[
\left|\frac{p_j}{1 + (\theta^{-\kappa} - 1)p_1 + (\theta^{\kappa} - 1)q_1 + (w - 1)b_1} - \frac{\theta^{-\kappa'} p_j}{1 + (\theta^{-\kappa'} - 1)p_j + (\theta^{\kappa'}w - 1)q_j + (w - 1)(b_j - q_1)}\right|.
\]
Since the structure of this expression is analogous to that of the case $i = j = 1$, $\ell \neq 1$, we can directly apply the estimation (\ref{es3}). Thus, the same upper bound holds for the case $i = 1$, $j = \ell > 1$.\medskip

\textbf{Case $i = 1$, $j,\ell > 1$, and $j \neq \ell$.} In this case, the difference between the two probability expressions can be written as
\[
\left|\frac{p_j}{1 + \left(\theta^{-\kappa} - 1\right)p_1 + \left(\theta^{\kappa} - 1\right)q_1 + (w - 1)b_1} - \frac{p_j}{1 + \left(\theta^{-\kappa'} - 1\right)p_{\ell} + \left(\theta^{\kappa'}w - 1\right)q_{\ell} + (w - 1)(b_{\ell} - q_1)}\right|.
\]
In order to estimate this difference, it suffices to consider the estimation of the following function:
\[
\left|\frac{p_j}{p_j + p_{\ell} + a\,p_1 + a^{-1}q_1 + w\,q_{\ell}} - \frac{p_j}{p_j + b\,p_{\ell} + p_1 + q_1 + b^{-1}w\,q_{\ell}}\right|,
\]
where we have introduced the notations $a = \theta^{-\kappa} \quad \text{and} \quad b = \theta^{-\kappa'}.$
For the moment, assume that $a = b$ (the analysis for the case $a = b^{-1}$ proceeds in a similar fashion). To simplify the expression further, we define the normalized variables
\[
x = \frac{p_{\ell} + a^{-1}q_1}{p_j} \quad \text{and} \quad y = \frac{p_1 + a^{-1}w\,q_{\ell}}{p_j}.
\]
With these substitutions, the function becomes
\[
\left|\frac{1}{1 + x + a\,y} - \frac{1}{1 + a\,x + y}\right|.
\]
A detailed estimation of this expression shows that
\[
\left|\frac{1}{1 + x + a\,y} - \frac{1}{1 + a\,x + y}\right| \leq \frac{\sqrt{a} - 1}{\sqrt{a} + 1} \cdot \frac{1}{1 + (a+1)y} \leq \frac{|\sqrt{a} - 1|}{\sqrt{a} + 1}.
\]
Since $a = \theta^{-\kappa}$, it follows that
\[
\frac{|\sqrt{a} - 1|}{\sqrt{a} + 1} = \frac{\sqrt{\theta} - 1}{\sqrt{\theta} + 1}.
\]
Thus, we obtain the inequality
\begin{equation}\label{es6}
\left| p^{(\kappa, 1)}(\epsilon, j) - p^{(\kappa', \ell)}(\epsilon, j) \right| \leq \frac{\sqrt{\theta} - 1}{\sqrt{\theta} + 1}.
\end{equation}
This completes the estimation for the case $i = 1$, $j,\ell > 1$, and $j \neq \ell$.
\medskip

\textbf{Case $i = j = \ell > 1$.} The methods employed to determine the maximum value of the function in this case are analogous to those discussed previously. Consequently, we present here the same estimation as given in (\ref{es3}).\medskip

\textbf{Case $i, j = \ell > 1$, $i \neq j$.} First, we provide the estimation for the case where $\kappa = \kappa'$. By introducing the notation
$$
a = \theta^{\kappa},
$$
we can express the corresponding function as
$$
\left|\frac{p_j}{a p_i + a^{-1}wq_i + p_j + wq_j} - \frac{a p_j}{a p_j + a^{-1}wq_j + p_i + wq_i}\right|.
$$
To facilitate the analysis, we perform the substitutions
$$
x = \frac{p_j + a^{-1}wq_i}{p_j}, \quad y = \frac{p_i + a^{-1}wq_j}{p_j}.
$$
With these new variables, the expression simplifies to
$$
\left|\frac{1}{x + a y} - \frac{a}{a x + y}\right|.
$$
An estimation of this expression yields
$$
\left|\frac{1}{x + a y} - \frac{a}{a x + y}\right| \leq \frac{|1 - a^2| \, y}{(1 + a y)(a + y)} \leq \frac{|a - 1|}{a + 1} = \frac{\theta - 1}{\theta + 1}.
$$
Furthermore, for the case where $\kappa = -\kappa'$, an analogous analysis leads to the same estimation. Therefore, for the case $i, j = \ell > 1$ with $i \neq j$, we deduce the inequality (\ref{es3}).\medskip

\textbf{Case: Distinct Indices $i$, $j$, $\ell$ with $i, j, \ell > 1$.} In this scenario, we consider the case where the indices $i$, $j$, and $\ell$ are all distinct and satisfy $i, j, \ell > 1$. Moreover, we assume that $\kappa = \kappa'$; the alternative situation when $\kappa \neq \kappa'$ can be analyzed in an analogous manner, yielding similar estimates.

Under the assumption $\kappa = \kappa'$, the difference between the two probability expressions under consideration can be written as
$$\Bigg|\frac{p_j}{1 + \left(\theta^{-\kappa} - 1\right)p_i + \left(\theta^{\kappa}w - 1\right)q_i + (w - 1)(b_i - q_1)}-\qquad \qquad \qquad $$
$$\qquad \qquad \qquad - \frac{p_j}{1 + \left(\theta^{-\kappa'} - 1\right)p_{\ell} + \left(\theta^{\kappa'}w - 1\right)q_{\ell} + (w - 1)(b_{\ell} - q_1)}\Bigg|.$$
To estimate this difference, it is sufficient to derive an upper bound for the function
\[
\left|\frac{p_j}{p_j + p_{\ell} + a\,p_i + a^{-1}wq_i + (w - 1)q_{\ell}} - \frac{p_j}{p_j + a\,p_{\ell} + p_i + wq_i + a^{-1}wq_{\ell}}\right|,
\]
where we have introduced the notation $a = \theta^{-\kappa}$ for convenience.

To simplify the analysis further, we define the normalized variables
\[
x = \frac{p_{\ell} + a^{-1}wq_i}{p_j} \quad \text{and} \quad y = \frac{p_i + a^{-1}wq_{\ell}}{p_j}.
\]
With these substitutions, the expression reduces to
\[
\left|\frac{1}{1 + x + a\,y} - \frac{1}{1 + a\,x + y}\right|.
\]
We note that this function has been examined previously in the case where $i = 1$, and $j, \ell > 1$ with $j \neq \ell$. Consequently, the estimates derived in that context apply directly to the present case, thereby providing the desired bound.

We now compare the various estimations. In particular, note that the function
$f(x)=\frac{x-1}{x+1}$
is monotonically increasing on the interval $[1,\infty)$. Consequently, for all $\theta \geq 1$, we have
$$
\frac{\sqrt{\theta}-1}{\sqrt{\theta}+1} \leq \frac{\theta-1}{\theta+1} \leq \frac{\theta^2-1}{\theta^2+1}.
$$
Based on the above estimations, we conclude with the following proposition.
\begin{pro}\label{prop} The absolute difference between the probabilities $ p^{(\kappa, i)}(\epsilon, j) $ and $ p^{(\kappa', \ell)}(\epsilon, j) $ is bounded above, as expressed in the following inequality:
\begin{equation}\label{ms}\begin{gathered}
\left|p^{(\kappa, i)}(\epsilon, j)-p^{\left(\kappa^{\prime}, \ell\right)}(\epsilon, j)\right| \leq \frac{\theta^2-1}{\theta^2+1}.\end{gathered}
\end{equation}
\end{pro}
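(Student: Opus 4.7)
The plan is to prove the bound by an exhaustive case analysis on the index triple $(i,j,\ell)$ together with the spin pair $(\kappa,\kappa')\in\{-1,1\}^2$, using the explicit piecewise formula (\ref{px}) for $p^{(\kappa,i)}(\epsilon,j)$. The natural partition of cases is according to whether each of $i$ and $\ell$ equals $1$ (the distinguished coordinate where $z_{1,1}=1$ versus $z_{1,j>1}=w$) and according to the coincidences among $i,j,\ell$. In each case I would normalize by dividing numerator and denominator by the common numerator $p_j$, introduce two nonnegative auxiliary variables (typically of the form $x=(p_\ast+aq_\ast)/p_j$ and $y=(p_{\ast\ast}+a^{-1}wq_{\ast\ast})/p_j$), and reduce the estimate to a two-variable optimization problem on $\{(x,y)\in\mathbb R_{\geq 0}^2\}$.

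The key steps, in order, are:
\textbf{(i)} The symmetric case $i=j=\ell=1$. Here the two conditional probabilities have the same numerator $p_1$ and parallel denominators in $(\theta^{\kappa},\theta^{-\kappa})$; after substituting $x=q_1/p_1$, $y=b_1/p_1$ the difference becomes
\[
\frac{(\theta^2-1)\bigl((\theta^2+1)x+\theta w y\bigr)}{(\theta^2+x+\theta w y)(1+\theta^2 x+\theta w y)},
\]
and the claim $\leq \tfrac{\theta^2-1}{\theta^2+1}$ reduces, after clearing denominators, to a sum-of-squares inequality $w^2\theta^2 y^2+w\theta(\theta^2+1)xy+\theta^2(x-1)^2\geq 0$, which is manifestly nonnegative. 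This case supplies the worst bound and drives the statement.
\textbf{(ii)} The mixed cases $i=1,\,j\neq 1$ with $\ell$ either equal to $j$ or distinct. Here the substitution pattern reduces the estimate to functions of the form $g$ and $g_1$ of Lemmas \ref{newlemma}--\ref{newlemma1}, producing the bound $\tfrac{\theta-1}{\theta+1}$ (and, in one subcase, $\tfrac{\theta^2-1}{(\sqrt\theta+1)(\sqrt{\theta^3}+1)}$, which is smaller).
\textbf{(iii)} The cases with $i>1$, broken into $\ell=j$, $\ell\neq j$, and all-distinct indices. After the analogous normalization, each of these reduces to an expression of the same shape as in step (ii), yielding again the bound $\tfrac{\theta-1}{\theta+1}$ or the still smaller $\tfrac{\sqrt\theta-1}{\sqrt\theta+1}$ when neither subscript appears in the symmetric Ising channel.

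\textbf{(iv)} Finally I would compare the three candidate bounds. Since $f(x)=(x-1)/(x+1)$ is increasing on $[1,\infty)$ and $\theta>\theta_{c,0}>1$ in the relevant regime, we have
\[
\frac{\sqrt\theta-1}{\sqrt\theta+1}\;\leq\;\frac{\theta-1}{\theta+1}\;\leq\;\frac{\theta^2-1}{\theta^2+1},
\]
so the maximum over all cases is the bound coming from step (i). This yields (\ref{ms}).

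I expect the main obstacle to be step (i): the $i=j=\ell=1$ case involves all three free parameters $x,y,w$ simultaneously, and the bound $(\theta^2-1)/(\theta^2+1)$ is sharp there, so no slack can be sacrificed. The reduction must be carried out carefully — cross-multiplying without dropping the $w$-dependent cross terms — and the appearance of the clean identity $\theta^2(x-1)^2+w\theta(\theta^2+1)xy+w^2\theta^2 y^2\geq 0$ is what makes the proof work. Every other case is routine two-variable calculus or a direct appeal to Lemmas \ref{newlemma} and \ref{newlemma1}, and these subordinate bounds only need to be compared at the end.
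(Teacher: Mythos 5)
Your proposal follows essentially the same route as the paper: an exhaustive case analysis on $(i,j,\ell)$ and $(\kappa,\kappa')$, normalization to a two-variable optimization, the sum-of-squares identity $w^2\theta^2y^2+w\theta(\theta^2+1)xy+\theta^2(x-1)^2\geq 0$ in the dominant case $i=j=\ell=1$, and the final comparison of the candidate bounds via the monotonicity of $(x-1)/(x+1)$. The only cosmetic difference is that the paper carries out the subordinate estimates directly rather than routing them through Lemmas \ref{newlemma} and \ref{newlemma1} (which it reserves for the $w=1$ case), but the bounds obtained are identical.
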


Recall that the parameter $\kappa$ is defined as follows, corresponding to the matrix (\ref{ET}):
    $$\kappa = \frac{1}{2} \max_{i,j} \sum_{l} \left| P_{il} - P_{jl} \right|.$$
After extensive calculations, we obtain the following expressions for all possible sums:
$$S_1=\frac{2\left(\theta-\theta^{-1}\right)}{\mathcal{Z}_1}, \quad S_{2}=\frac{2(\theta-1)(\theta+w)}{\theta \mathcal{Z}_2}, \quad
 S_{3}=\frac{2(\theta-1)(\theta+w)}{\theta \mathcal{Z}_4}$$

$$S_4=\left(\frac{1}{\theta}+1\right)\left|\frac{\theta}{\mathcal{Z}_1}-\frac{1}{\mathcal{Z}_4}\right|+
3(w+1)\left|\frac{1}{\mathcal{Z}_1}-\frac{1}{\mathcal{Z}_4}\right|+
\left(w+\frac{1}{\theta}\right)\left|\frac{1}{\mathcal{Z}_1}-\frac{\theta}{\mathcal{Z}_4}\right|, $$

$$S_{5}=2(w+2)\left|\frac{1}{\mathcal{Z}_2}-\frac{1}{\mathcal{Z}_4}\right|+
\left(1+\frac{1}{\theta}\right)\left|\frac{\theta}{\mathcal{Z}_2}-\frac{1}{\mathcal{Z}_4}\right|+w\left(1+\frac{1}{\theta}\right)
\left|\frac{1}{\mathcal{Z}_2}-\frac{\theta}{\mathcal{Z}_4}\right|, $$

$$S_6=\left(3w+5\right)\left|\frac{1}{\mathcal{Z}_2}-\frac{1}{\mathcal{Z}_4}\right|+\left|\frac{\theta}{\mathcal{Z}_2}-\frac{\theta^{-1}}{\mathcal{Z}_4}\right|+\left|\frac{\theta^{-1} w}{\mathcal{Z}_2}-\frac{\theta w}{\mathcal{Z}_4}\right|.$$

Among the seven expressions under consideration, most of them can be directly compared based on their structural properties.

Firstly, consider the case where $ w > 1 $. It is straightforward to verify that the following inequalities hold:
$$
\mathcal{Z}_4 > \mathcal{Z}_1 > \mathcal{Z}_2 \quad \text{and} \quad \mathcal{Z}_4 < \theta \mathcal{Z}_2.
$$ We demonstrate that $ S_2 > S_1 $ and $ S_2 > S_3 $. This can be established through the following arguments:
Since $ \frac{1}{\mathcal{Z}_4} < \frac{1}{\mathcal{Z}_2} $, it directly follows that $ S_3 < S_2 $. Additionally, using the fact that $ w > 1 $ and $ \frac{1}{\mathcal{Z}_2} > \frac{1}{\mathcal{Z}_4} $, we derive the inequality:
\[
\frac{\theta + 1}{\mathcal{Z}_1} < \frac{\theta + w}{\mathcal{Z}_2},
\]
which is equivalent to $ S_2 > S_1 $.

Furthermore, we aim to establish the inequality $ S_6 > S_5 $, which necessitates proving:
$$
(\omega + 1) \left| \frac{1}{\mathcal{Z}_2} - \frac{1}{\mathcal{Z}_4} \right| + \left| \frac{\theta}{\mathcal{Z}_2} - \frac{\theta^{-1}}{\mathcal{Z}_4} \right| + \left| \frac{\theta^{-1} \omega}{\mathcal{Z}_2} - \frac{\theta \omega}{\mathcal{Z}_4} \right| > \left(1 + \frac{1}{\theta}\right) \left( \left| \frac{\theta}{\mathcal{Z}_2} - \frac{1}{\mathcal{Z}_4} \right| + \omega \left| \frac{1}{\mathcal{Z}_2} - \frac{\theta}{\mathcal{Z}_4} \right| \right).
$$
Utilizing the conditions $ \frac{1}{\mathcal{Z}_2} > \frac{1}{\mathcal{Z}_4} $ and $ \mathcal{Z}_4 < \theta \mathcal{Z}_2 $, the inequality can be reformulated as:
$$
\left(\omega + 1 + \theta - \frac{\omega}{\theta}\right) \frac{1}{\mathcal{Z}_2} + \left(\theta \omega - \frac{1}{\theta} - \omega - 1\right) \frac{1}{\mathcal{Z}_4} > (\theta + 1 - \omega) \frac{1}{\mathcal{Z}_2} - \left(1 + \frac{1}{\theta} + \omega \theta\right) \frac{1}{\mathcal{Z}_4}.
$$
This reduces to verifying the simpler inequality:
$$\left(2 - \frac{1}{\theta}\right) \frac{1}{\mathcal{Z}_2} > \frac{1}{\mathcal{Z}_4},$$
which holds true given that $ \frac{1}{\mathcal{Z}_2} > \frac{1}{\mathcal{Z}_4} $.

Additionally, after extensive calculations, we establish that:
$$
S_5 - S_2 = (1 + 3w) \theta^4 + (12 + 17w + 9w^2) \theta^3 + (11 + 33w + 10w^2) \theta^2 + (9w + 13w^2) \theta + 2w > 0.
$$
Analogously, we can show that $S_6\geq S_4$. Consequently, for the case where $ w > 1 $, the estimation of $ \kappa $ is determined by the following expression:
\begin{equation}\label{ka1}
    \kappa = \frac{1}{2}S_6.
\end{equation}
This formulation highlights the dominant contribution among $ S_4 $, $ S_5 $, and $ S_7 $, thereby providing an upper bound for $ \kappa $ in accordance with the derived inequalities.

In an analogous manner, for the case where $ w < 1 $, it can be rigorously demonstrated that the following inequalities hold: $\max\{S_2, S_3\} \geq S_i$, for all $i\in\{1,2,3,5,6\}$.  These relationships indicate that $ S_2$ and $S_3$ dominate over the other respective terms under the condition $ w < 1 $. Consequently, the estimation of $\kappa $ in this scenario is given by the expression:
\begin{equation}\label{ka2}
    \kappa = \frac{1}{2} \max\{S_2, S_3\}.
\end{equation}

By synthesizing the results from equations \eqref{ka1} and \eqref{ka2}, we arrive at the following lemma, which succinctly characterizes the behavior of $ \kappa $ based on the value of $ w $:

\begin{lemma}\label{lemma}
Let $ \kappa $ be defined as above. Then, the following holds:
$$
2\kappa =
\begin{cases}
 \max\{S_2, S_3\}, & \text{if } w < 1, \\[4pt]
 S_7, & \text{if } w > 1.
\end{cases}
$$
\end{lemma}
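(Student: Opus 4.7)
The plan is to compute $2\kappa = \max_{i,j}\sum_\ell |P'_{i\ell}-P'_{j\ell}|$ directly from the explicit transition matrix $\mathbb P'_1$ in (\ref{ET}) and to identify the maximising pair of rows in each regime. Because $u=v=1$, the $2q$ rows of $\mathbb P'_1$ split into only four symmetry classes, determined by whether the Ising coordinate is $-1$ or $+1$ and whether the Potts coordinate equals $1$ or exceeds $1$. Up to permutation of the $(q-1)$ symmetric Potts states there are therefore only finitely many distinct values of $\sum_\ell |P'_{i\ell}-P'_{j\ell}|$, and these are precisely the quantities $S_1,\ldots,S_7$ listed in the preceding discussion. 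The lemma then reduces to a purely combinatorial comparison among these explicit expressions in the two regimes $w<1$ and $w>1$.

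\textbf{Sign patterns.} The first step is to pin down the ordering of the normalising constants $\mathcal Z_1, \mathcal Z_2, \mathcal Z_4$ in each regime, since this is what determines how to open every absolute value occurring in the $S_i$. For $w>1$ (with $\theta>1$) direct inspection of their definitions yields $\mathcal Z_4>\mathcal Z_1>\mathcal Z_2$ together with $\mathcal Z_4<\theta\mathcal Z_2$, while for $w<1$ the opposite inequalities hold. The built-in symmetry $\mathcal Z_4(\theta)=\mathcal Z_2(\theta^{-1})$ halves the work, since every comparison involving $\mathcal Z_4$ reduces to a $\mathcal Z_2$-comparison under $\theta\leftrightarrow\theta^{-1}$. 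Once all signs are fixed, each $S_i$ becomes a concrete rational function of $(\theta,w)$ with no absolute values.

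\textbf{Pairwise comparisons.} For $w>1$ I would eliminate the six competing candidates one at a time by showing $S_7\ge S_i$ for $i=1,\ldots,6$. The inequalities $S_7\ge S_1,S_2,S_3$ are immediate from the ordering of $\mathcal Z_1,\mathcal Z_2,\mathcal Z_4$. The comparison $S_7\ge S_4$ proceeds by rewriting $S_4$ in the sign-resolved form dictated by $\mathcal Z_4>\mathcal Z_1$ and matching terms. The comparisons $S_7\ge S_5$ and $S_7\ge S_6$ are the substantial ones; after clearing the common positive denominator, both reduce to polynomial inequalities in $(\theta,w)$ with $\theta>1$ and $w>1$, exactly of the shape already exhibited for $S_5-S_2$ in the excerpt, namely a polynomial with visibly non-negative coefficients. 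The analogous argument for $w<1$ eliminates every $S_i$ except $S_2$ and $S_3$; the latter two cannot be ordered universally because their relative size depends on the subordering of $\mathcal Z_2$ and $\mathcal Z_4$, which is precisely why the $w<1$ branch of the lemma keeps the maximum of the pair.

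\textbf{Main obstacle.} The main obstacle is algebraic rather than conceptual. Each difference $S_7-S_i$ (respectively $\max\{S_2,S_3\}-S_i$) expands into a lengthy polynomial in $(\theta,w)$, and verifying non-negativity amounts to collecting like monomials and certifying a non-negative coefficient sum under the appropriate hypothesis. The displayed identity $S_5-S_2=(1+3w)\theta^4+(12+17w+9w^2)\theta^3+\cdots>0$ is the template: every remaining comparison has to be rewritten in such a manifestly positive form, most conveniently with computer-algebra assistance, after which the lemma follows from the case analysis above.
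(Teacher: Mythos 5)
Your proposal is correct and takes essentially the same route as the paper: both compute the row sums $\sum_\ell|P'_{i\ell}-P'_{j\ell}|$ for the finitely many symmetry classes of rows of (\ref{ET}), fix the orderings of $\mathcal Z_1,\mathcal Z_2,\mathcal Z_4$ (and $\mathcal Z_4$ vs.\ $\theta\mathcal Z_2$) in each regime to resolve the absolute values, and then eliminate the competing $S_i$ by pairwise comparisons that reduce to (computer-assisted) polynomial positivity checks of the type displayed for $S_5-S_2$. The only caveat is notational and originates in the paper itself: only $S_1,\dots,S_6$ are defined, the $w>1$ maximiser is $S_6$ as in (\ref{ka1}), and the ``$S_7$'' in the lemma is an indexing slip, so your comparisons ``$S_7\ge S_i$'' should be read as comparisons of $S_6$ with $S_1,\dots,S_5$ (of which $S_6\ge S_2$ is not immediate but goes through the chain $S_6>S_5>S_2$, exactly as you anticipate for the substantial cases).
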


By employing the estimates for \(\gamma\) and \(\kappa\) provided in Proposition \ref{prop} and Lemma \ref{lemma}, one can effectively determine all the values of \(\theta\) for which the inequality \(2\kappa\gamma < 1\) holds. Since the expression \(2\kappa\gamma - 1\) depends solely on \(\theta\), it is natural to consider this expression as a function of \(\theta\). Through rigorous computer-assisted analysis, we are able to identify the intervals of \(\theta\) where this function assumes negative values, thereby ensuring that the condition \(2\kappa\gamma < 1\) is satisfied. Consequently, we can state the following theorem.

\begin{thm}
For $q = 5$, the critical parameters are given by
\[
\tilde\theta_1 \approx 8.3779, \quad \tilde\theta_2\approx 8.3612, \quad \theta_*\approx 11.76 \quad \text{and} \quad \theta^* \approx 12.28.
\] Then the following statements hold:

(i) The measure $\mu^{\ast}$ exhibits distinctly different behaviors depending on the value of $\theta$, as described below:
\[
\mu^{\ast} \text{ is extreme,} \quad \text{if} \quad \theta \in (\theta_{{\rm c}, 0}, \tilde\theta_1),
\]
\[
\mu^{\ast} \text{ is non-extreme,} \quad \text{if} \quad \theta \in (\theta^*, \infty).
\]

(ii) The measure \(\mu_{\ast}\) behaves differently depending on the value of \(\theta\), as detailed below:
\[
\mu_{\ast} \text{ is extreme,} \quad \text{if} \quad \theta \in (\theta_{{\rm c}, 0}, \tilde\theta_2),
\]
\[
\mu_{\ast} \text{ is non-extreme,} \quad \text{if} \quad \theta \in (\theta_*, \infty).
\]
\end{thm}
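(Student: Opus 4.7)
The plan is to apply Theorem \ref{tmw} with $k=2$: a TISGM $\mu$ is extreme whenever $2\kappa(\mu)\gamma(\mu) < 1$. Combining Proposition \ref{prop} (which yields $\gamma \le (\theta^2-1)/(\theta^2+1)$ for both $\mu_*$ and $\mu^*$) with Lemma \ref{lemma} (which gives $2\kappa$ in closed form depending on whether $w<1$ or $w>1$), the extremality problem reduces to verifying a one-variable inequality in $\theta$. The non-extremality statements are already established by Results 9 and 10, which apply the Kesten-Stigum bound $k\lambda_2^2>1$ at $w=z_*$ and $w=z^*$; so only the extremality half of the theorem requires fresh work.

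First I would treat $\mu^*$, for which $w=z^*>1$ by Lemma \ref{m0} and Figure \ref{z*}. By Lemma \ref{lemma} we have $2\kappa = S_6$ (the label $S_7$ there being a misprint for $S_6$) evaluated at $w=z^*$. Substituting the explicit formula for $z^*$ from Result 1 into $S_6$ and into the normalization constants $\mathcal Z_2,\mathcal Z_4$ turns $\kappa$ into a (lengthy) function of $\theta$ alone. Forming
\[
\Psi^*(\theta) \;:=\; 2\kappa(\theta,z^*(\theta))\cdot\frac{\theta^2-1}{\theta^2+1},
\]
the extremality of $\mu^*$ holds on the maximal subinterval of $(\theta_{\rm c,0},\infty)$ where $\Psi^*(\theta)<1$. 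The claim is that this interval contains $(\theta_{\rm c,0},\tilde\theta_1)$ with $\tilde\theta_1\approx 8.3779$, which I would check by evaluating $\Psi^*$ on a fine grid together with a derivative/monotonicity estimate near the boundary $\tilde\theta_1$ to certify the sign change.

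The argument for $\mu_*$ (where $w=z_*<1$) is structurally identical: Lemma \ref{lemma} gives $2\kappa=\max\{S_2,S_3\}$, which upon substituting the explicit formula for $z_*$ becomes a closed-form function of $\theta$; combining with the same $\gamma$-bound yields
\[
\Psi_*(\theta) \;:=\; 2\kappa(\theta,z_*(\theta))\cdot\frac{\theta^2-1}{\theta^2+1},
\]
and extremality on $(\theta_{\rm c,0},\tilde\theta_2)$ follows from $\Psi_*(\theta)<1$ on that interval. One subtlety is to compare $S_2$ and $S_3$ (which involve $\mathcal Z_2$ and $\mathcal Z_4=\mathcal Z_2(\theta^{-1})$) on the relevant $\theta$-range in order to identify the dominant summand; this should follow from the inequalities derived in the discussion preceding Lemma \ref{lemma}, now run in the regime $w<1$.

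The main obstacle is purely the algebraic weight of the resulting expressions: both $z_*$ and $z^*$ involve the square root of an eighth-degree polynomial in $\theta$, and $S_2,S_3,S_6$ mix these with nested absolute values, so neither $\Psi^*$ nor $\Psi_*$ admits a tidy symbolic root analysis. Consequently the critical thresholds $\tilde\theta_1\approx 8.3779$ and $\tilde\theta_2\approx 8.3612$ must be obtained by computer-assisted numerical computation; a fully rigorous treatment would require either interval arithmetic or an independent monotonicity argument on a suitable factorization to guarantee the uniqueness of the crossing point in the asserted interval. It is also worth emphasizing that the gap $[\tilde\theta_1,\theta^*]$ (and similarly $[\tilde\theta_2,\theta_*]$) is left open by our method, reflecting the well-known non-sharpness of both the Kesten-Stigum bound and the Martinelli-Sinclair-Weitz criterion $k\kappa\gamma<1$.
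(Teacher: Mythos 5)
Your proposal follows essentially the same route as the paper: non-extremality is inherited from the Kesten--Stigum computations of Results 9 and 10, and extremality is obtained from the Martinelli--Sinclair--Weitz criterion $k\kappa\gamma<1$ with $\gamma$ bounded by Proposition \ref{prop}, $2\kappa$ given by Lemma \ref{lemma} (you correctly read $S_7$ as a misprint for $S_6$ and correctly match $z_*<1$ and $z^*>1$ to the two branches of that lemma), and the resulting one-variable inequality in $\theta$ resolved by computer-assisted numerics to locate $\tilde\theta_1$ and $\tilde\theta_2$. This is exactly what the paper does, including leaving the intermediate ranges $[\tilde\theta_1,\theta^*]$ and $[\tilde\theta_2,\theta_*]$ open.
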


%\textcolor[rgb]{1.00,0.00,0.00}{In other words, the measure $\mu^{\ast}$ transitions between two fundamentally different regimes as $\theta$ varies. When $\theta$ is within the interval $(\theta_{{\rm c}, 0}, \tilde\theta_1)$, the measure exhibits an extreme behavior, which typically implies a high sensitivity to parameter changes and the potential for singular or highly ordered structures. Conversely, for values of $\theta$ exceeding the threshold $\theta^*$, namely in the interval $(\theta^*, \infty)$, the measure becomes non-extreme, indicating a more balanced or stable configuration.}
\section*{ Acknowledgements}

U.A. Rozikov thanks Institute for Advanced Study in Mathematics,
Harbin Institute of Technologies,  China for financial support and hospitality.


\begin{thebibliography}{99}
	\bibitem{Ab} D.S. Abrams, J.M. Prausnitz, \textit{Statistical thermodynamics of liquid mixtures: A new expression for the excess Gibbs energy of partly or completely miscible systems}. AIChE Journal, \textbf{21}, 1975, 116-128.
	
	\bibitem{AZ} K. Adam, D. Z\"ollner, D.P. Field, \textit{3D microstructural evolution of primary
		recrystallization and grain growth in cold
		rolled single-phase aluminum alloys}, Modelling Simul. Mater. Sci. Eng. \textbf{26} (2018), 035011 (16 pages).
	
%	\bibitem{Ag1} F.S. de Aguiar, L.B. Bernardes, S. Goulart Rosa, Jr., \textit{Metastability in the Potts model on the Cayley tree}. J. Statist. Phys. \textbf{64}(3-4)  (1991), 673--682.
	
\bibitem{Ao} Y.	Aoun, M. Dober, A. Glazman, \textit{Phase diagram of the Ashkin–Teller model}. Commun. Math. Phys. \textbf{405}, 37 (2024), 33 pages.
	
\bibitem{Ba} R.J. Baxter, \textit{Exactly solved models in statistical mechanics}. Dover Publications. 1982.

\bibitem{Be}  S. Bekhechi, A. Benyoussef, A. El Kenz, B. Ettaki,  M. Loulidi, \textit{Phase transitions in the mixed Ashkin-Teller model}, Eur. Phys. J. B \textbf{18}, 2000, 275-282.
	
%	\bibitem{BRZ} P.M. Bleher, J. Ruiz, V.A. Zagrebnov, \textit{On the purity of the limiting
%		Gibbs state for the Ising model on the Bethe lattice},  J. Statist.
%	Phys. \textbf{79} (1995) 473--482.
	
	\bibitem{BR} L.V. Bogachev, U.A. Rozikov, \textit{On the uniqueness of Gibbs measure in
		the Potts model on a Cayley tree with external field}. J. Stat. Mech. Theory Exp. \textbf{7} (2019), 073205, 76 pp.
	
\bibitem{Do}	M. Dober, \textit{On antiferromagnetic regimes in the Ashkin-Teller model}, 	arXiv:2406.06266 [math.PR], 2024.
	
	\bibitem{Du} H. Duminil-Copin, V. Sidoravicius, V. Tassion, \textit{Continuity of the phase transition for planar
		random-cluster and Potts models with $1\leq q\leq 4$}.
	Comm. Math. Phys. \textbf{349}(1) (2017), 47-107.
	
	\bibitem{Dum}  H. Duminil-Copin, A. Raoufi, V. Tassion, \textit{Sharp phase transition for the random-cluster and
		Potts models via decision trees}. Ann. of Math. (2) \textbf{189}(1) (2019), 75-99.
	
	\bibitem{Ga8}  N.N. Ganikhodjaev, \textit{On pure phases of the three-state ferromagnetic Potts model on the second-order Bethe lattice}.  Theor. Math. Phys.  \textbf{85}(2)  (1990), 1125--1134.
	
%	\bibitem{Ga9}  N.N. Ganikhodzhaev, \textit{On pure phases of the ferromagnetic Potts model Bethe lattices},  Dokl. AN Uzbekistan.  No. 6-7  (1992), 4--7.
	
	\bibitem{Ga13} N.N. Ganikhodjaev, U.A. Rozikov, \textit{The Potts model with countable set of spin
		values on a Cayley tree}, Lett. Math. Phys.  \textbf{75}(2)  (2006), 99--109.
	
	\bibitem{Ge} H.O. Georgii, \textit{Gibbs Measures and Phase Transitions},  Second edition. de Gruyter Studies in Mathematics, 9. Walter de Gruyter, Berlin, 2011.
	
\bibitem{Gi} A.	Giuliani, V. Mastropietro, \textit{Anomalous universality in the anisotropic Ashkin-Teller model}. Commun. Math. Phys. \textbf{256}, 2005, 681-735.

\bibitem{Gr} M. Granovetter, \textit{Threshold models of collective behavior}. American Journal of Sociology, \textbf{83}(6), 1978, 1420-1443.
	
%	\bibitem{Ha96} O. H\"aggstr\"om,  \textit{
%		The random-cluster model on a homogeneous tree},
%	Probab. Theory and Relat. Fields \textbf{104} (1996) 231--253.
	
\bibitem{Ho} J.J. Hopfield, \textit{Neural networks and physical systems with emergent collective computational abilities}. Proceedings of the National Academy of Sciences, \textbf{79}(8), 1982,  2554-2558.
	
	\bibitem{Ke} H. Kesten, B.P. Stigum, \textit{Additional limit theorem for indecomposable multi-dimensional Galton-Watson processes}, Ann. Math. Statist. \textbf{37} (1966), 1463--1481.
	
%	\bibitem{HaKu04}
%	O. H\"aggstr\"om, C. K\"ulske,  \textit{Gibbs properties of the fuzzy Potts model on trees and in mean field},
%	Markov Proc. Rel. Fields. \textbf{10}(3)  (2004), 477--506.
	
\bibitem{KRK} C. K\"ulske C., U.A. Rozikov, R.M. Khakimov, \textit{Description of the translation-invariant splitting Gibbs measures for the Potts model on a Cayley tree}. Jour. Stat. Phys. \textbf{156}(1) (2014), 189--200.

 \bibitem{KR}   C. K\"ulske, U.A. Rozikov, \textit{Fuzzy transformations and extremality
	of Gibbs measures for the Potts model on a Cayley tree}. Random Structures Algorithms. \textbf{50}(4) (2017),  636-678.
	
	\bibitem{MSW} F. Martinelli, A. Sinclair, D. Weitz, \textit{Fast mixing for independent sets, coloring and other models on
		trees}. Random Structures and Algoritms, \textbf{31} (2007), 134-172.
	
	\bibitem{Mc} P. D. McMahon, \textit{Ising-Potts models and solid-liquid phase equilibria in binary mixtures}, J. Phys. A: Math. Gen. \textbf{22}, 1989,  4453-4461.
	
\bibitem{Ni} H. Nishimori, \textit{Statistical physics of spin glasses and information processing: an introduction}, Oxford University Press, 2001.
	
	\bibitem{Os} M. Ostilli, F. Mukhamedov, \textit{1D three-state mean-field Potts model with first- and second-order phase transitions}.
	Phys. A \textbf{555} (2020), 124415, 10 pp.
	
	\bibitem{OV} S.Ott, Y. Velenik, \textit{Potts models with a defect line.} Comm. Math. Phys. \textbf{362}(1) (2018),  55--106.
		
	\bibitem{Per}  F. Peruggi, F. di Liberto, G. Monroy, \textit{Phase diagrams of the $q$-state Potts model on Bethe lattices}, Phys. A  \textbf{141}(1)  (1987),  151--186.
	
	\bibitem{Per3} F. Peruggi, \textit{Probability measures and Hamiltonian models on Bethe lattices}. I. \textit{Properties and construction of MRT probability measures}, J. Math. Phys.  \textbf{25}(11)  (1984),  3303--3315.
	
	\bibitem{Per5} F. Peruggi, F. di Liberto, G. Monroy, \textit{The Potts model on Bethe lattices}, I, {\sl General results}. J. Phys. A  \textbf{16}(4)  (1983),  811--827.
	
	\bibitem{Pol} I.J. Polmear, \textit{Light alloys from traditional alloys to nanocrystals}, 2006, 4th edn, Burlington, MA:
	Elsevier Butterworth-Heinemann.
	
	\bibitem{Pra} V.V. Prasolov, {\it Polynomials} (Springer-Verlag Berlin
	Heidelberg, 2004)
	
%	\bibitem{Pr} C. Preston, \textit{Gibbs States on Countable Sets.} Cambridge Univ. Press, London, 1974.
	
	\bibitem{Ro} U.A.  Rozikov, \textit{Gibbs measures on Cayley trees}.  World Sci. Publ. Singapore. 2013.
	
\bibitem{Rbp} U.A.	Rozikov, \textit{Gibbs measures in biology and physics: The Potts model}. World Sci. Publ. Singapore. 2022.
	
%	\bibitem{Ro9} U.A. Rozikov, \textit{On  pure  phase of the  anti-ferromagnetic Potts model on the  Cayley tree}, Uzbek Math. J. No. 1 (1999), 73--77 (Russian).
%	
%	\bibitem{Ro12} U.A. Rozikov, Yu.M. Suhov, \textit{Gibbs measures for SOS model on a Cayley tree},  Infin. Dimens. Anal.
%	Quantum Probab. Relat. Top. \textbf{9}(3) (2006), 471--488.
%	
%	\bibitem{Wa} F. Wagner, D. Grensing, J. Heide, \textit{New order parameters in the Potts model on a Cayley tree}, J. Phys. A
%	\textbf{34}(50)  (2001),  11261--11272.
	
	\bibitem{Wu} F. Y. Wu, \textit{The Potts model}. Rev. Modern Phys, \textbf{54}(1)  (1982), 235--268.
	
	\bibitem{Wuf} F.Y. Wu, \textit{Exactly solved models: a journey in statistical mechanics.
		Selected papers with commentaries (1963-2008)}. With an introduction by C. N. Yang and a review
	of Wu's research by J.-M. Maillard. World Scientific Publishing Co. Pte. Ltd., Hackensack, NJ, 2009.
	
%	\bibitem{Za} S. Zachary, \textit{Countable state space Markov random fields and Markov
%		chains on trees}, Ann. Probab. \textbf{11} (1983), 894--903.
%	
%	\bibitem{Za1} S. Zachary, \textit{Bounded, attractive and repulsive Markov
%		specifications on trees and on the one-dimensional lattice}, Stoch. Process. Appl. \textbf{20} (1985) 247--256.
	
\end{thebibliography}
	\end{document}